\documentclass[arxiv,reqno,twoside,a4paper,12pt]{amsart}

\usepackage[utf8x]{inputenc}
\usepackage{xcolor}
\usepackage{tikz}
\usepackage{amsmath, verbatim}
\usepackage{amssymb,amsfonts,mathrsfs}
\usepackage[colorlinks=true,linkcolor=blue,citecolor=blue]{hyperref}
\usepackage[driver=pdftex,heightrounded=true,centering]{geometry}
\usepackage{longtable}
\usepackage{tabularx}
\usepackage{multirow}
\usepackage{caption}

\usepackage{times}
\usepackage[T1]{fontenc}
\usepackage{color}
\usepackage{array}
\usepackage{graphicx}
\usepackage{setspace}
\usepackage{stmaryrd}
\usepackage{esint}

\linespread{1.05}
\usepackage[scaled]{helvet} 
\usepackage{courier} 
\usepackage[mathbf]{euler}

\bibliographystyle{alphadin}

\theoremstyle{plain}

\newtheorem{thm}{Theorem}[section]
\newtheorem{prop}[thm]{Proposition}
\newtheorem{cor}[thm]{Corollary}
\newtheorem{defn}[thm]{Definition}
\newtheorem{remark}[thm]{Remark}
\newtheorem{lem}[thm]{Lemma}

\theoremstyle{definition}


\theoremstyle{plain}

\newcommand{\R}{\mathbb{R}}

\newcommand{\C}{\mathbb{C}}

\newcommand{\N}{\mathbb{N}}

\newcommand{\dimn}{\mathrm{dim}}
\newcommand{\grad}{\mathrm{grad}}
\newcommand{\supp}{\mathrm{supp}}

\newcommand{\scal}{\mathrm{scal}}
\newcommand{\ric}{\mathrm{Ric}}

\newcommand{\dv}{\text{ }dV}


\DeclareMathOperator{\cH}{\mathscr{H}}
\DeclareMathOperator{\A}{\alpha}
\DeclareMathOperator{\w}{\omega}
\DeclareMathOperator{\V}{\mathcal{V}}
\DeclareMathOperator{\cC}{\mathscr{C}}
\DeclareMathOperator{\dom}{\mathscr{D}}
\DeclareMathOperator{\wx}{\widetilde{x}}
\DeclareMathOperator{\wt}{\widetilde{t}}
\DeclareMathOperator{\wz}{\widetilde{z}}

\DeclareMathOperator{\ff}{\textup{ff}}
\DeclareMathOperator{\tf}{\textup{tf}}
\DeclareMathOperator{\td}{\textup{td}}
\DeclareMathOperator{\rf}{\textup{rf}}
\DeclareMathOperator{\lf}{\textup{lf}}

\DeclareMathOperator{\ho}{\mathcal{C}^{\A}_{\textup{ie}}}
\DeclareMathOperator{\hho}{\mathcal{C}^{2+\A}_{\textup{ie}}}

\DeclareMathOperator{\hok}{\mathcal{C}^{k, \A}_{\textup{ie}}}

\setcounter{tocdepth}{1}
\numberwithin{equation}{section}

\definecolor{qqwuqq}{rgb}{0,0,0}

\begin{document}

\date{\today}

\title[Entropies for Manifolds with conical Singularities]
{Perelman's Entropies for Manifolds with conical Singularities}

\author{Klaus Kr\"oncke}
\address{University Hamburg, Germany} 
\email{klaus.kroencke@uni-hamburg.de} 

\author{Boris Vertman} 
\address{Universit\"at Oldenburg, Germany} 
\email{boris.vertman@uni-oldenburg.de}

\thanks{Partial support by DFG Priority Programme "Geometry at Infinity"}

\subjclass[2010]{Primary 53C44; Secondary 53C25; 58J05.}
\keywords{Perelman's entropy, Ricci flow, Ricci solitons,  singular spaces}

\begin{abstract}
In this paper we discuss Perelman's $\lambda$-functional, Perelman's Ricci shrinker entropy as well as the Ricci expander entropy
on a class of manifolds with isolated 
conical singularities. On such manifolds, a singular Ricci de Turck 
flow preserving the isolated conical singularities
exists by our previous work. We prove that the entropies are monotone along the singular 
Ricci de Turck flow. We employ these entropies 
to show that in the singular setting, Ricci solitons are gradient 
and that steady or expanding Ricci solitons are Einstein.
\end{abstract}

\maketitle

\tableofcontents

\section{Introduction and statement of the main results}

Perelman's utilization of Ricci flow in his proof of
Thurston's geometrization and the Poincare conjectures has dramatically increased
the interest in the research area of geometric flows. Perelman \cite{Perelman} 
introduced two important entropies, which are monotone along the Ricci flow and are
constant precisely on steady and shrinking solitions. More precisely, let $(M,g)$ 
be a compact Riemannian manifold of dimension $m$. We denote its scalar curvature by $\scal (g)$.
Perelman's $\lambda$-functional is then defined as follows. 
\begin{equation}\label{entropy-Perelman}
\lambda(g) :=\inf \left\{\int_M(\scal (g)+|\nabla f|^2_g)e^{-f}\dv_g \mid 
f\in C^\infty(M), \int_M e^{-f}\dv_g=1 \right\}.
\end{equation} 
Perelman's shrinker entropy is defined in three steps. First we
define the $\mathcal{W}_-$-functional $\mathcal{W}_-(g,f,\tau)$ for any $f\in C^\infty(M)$ and $\tau>0$
\begin{equation}\label{W-functional}
\mathcal{W}_-(g,f,\tau) :=\frac{1}{(4\pi\tau)^{m/2}}\int_M [\tau(|\nabla f|^2_g+\scal (g))+f-m]\, e^{-f}\dv_g.
\end{equation}
The $\mu_-$-functional is
\begin{equation}\label{shrinker-entropy}
\mu_-(g, \tau) :=\inf \left\{ \mathcal{W}_-(g,f,\tau) \mid  
f\in C^\infty(M), \frac{1}{(4\pi\tau)^{m/2}} \int_M e^{-f}\dv_g=1 \right\}.
\end{equation} 
Then the shrinker entropy is defined by 
\begin{equation}\label{entropy-Perelman-2}
\nu_-(g):= \inf \left\{ \nu_-(g,\tau) \mid \tau > 0\right\}
\end{equation}
and this latter infimum is finite and realized by a parameter $\tau_g$ if $\lambda(g)>0$, see \cite[Corollary 6.34]{CCG}.
These entropies are employed decisively in the analysis of 
Ricci solitions, which are Riemannian metrics $g$ such that for its 
Ricci curvature $\textup{Ric}(g)$, some vector field $X$, the Lie-derivative 
$\mathcal{L}_X$ and a positive constant $c>0$, the following equations are satisfied
\begin{equation}\label{def_soliton}
\begin{split}
& \textup{Ric}(g) + \mathcal{L}_X g = 0 \quad \textup{(steady Ricci soliton)}, \\
& \textup{Ric}(g) + \mathcal{L}_X g = c\, g \quad \textup{(shrinking Ricci soliton)}.
\end{split}
\end{equation}
Any steady Ricci soliton is up to a diffeomorphism a constant solution to the 
Ricci flow  
\begin{equation}\label{Ricci-steady}
\partial_tg(t) = - 2 \, \textup{Ric} \, (g(t)), \quad g(0)=g.
\end{equation}
Any shrinking Ricci soliton is up to a diffeomorphism a constant solution to the 
normalized Ricci flow
\begin{equation}\label{Ricci-shrinking}
\partial_t g(t) = - 2 \, \textup{Ric} \, (g(t)) + 2c\, g(t), 
\quad g(0)=g.
\end{equation}
Recall also that a Ricci soliton is called gradient if $X=\nabla f$ for some function $f:M\to\R$.
The following results are due to Perelman.
\begin{thm}[\cite{Perelman}] Let $(M,g)$ be a compact smooth Riemannian manifold. Then
	\begin{enumerate}
		\item The $\lambda$-functional is non-decreasing along the Ricci flow 
		\eqref{Ricci-steady} and is constant on steady Ricci solitons $g$. Moreover, any steady Ricci soliton is Ricci flat.
		\\
		\item The shrinker entropy is non-decreasing along the normalized Ricci flow
		\eqref{Ricci-shrinking} and is constant on shrinking Ricci solitons. Moreover, any 
		shrinking Ricci soliton is gradient.
	\end{enumerate}
\end{thm}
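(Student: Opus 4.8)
The plan is to reduce both statements to a single differentiation-under-the-flow computation, preceded by a variational reformulation that guarantees a smooth minimizer. For the $\lambda$-functional I would first substitute $w = e^{-f/2}$, so that the constraint in \eqref{entropy-Perelman} becomes $\int_M w^2\, dV_g = 1$ and
\[
\lambda(g) = \inf\left\{ \int_M \left( 4\,|\nabla w|^2_g + \scal(g)\, w^2\right) dV_g \; : \; \int_M w^2\, dV_g = 1 \right\},
\]
which exhibits $\lambda(g)$ as the bottom eigenvalue of the Schr\"odinger operator $-4\Delta_g + \scal(g)$. On a closed manifold this operator has discrete spectrum, the lowest eigenvalue is simple with a positive eigenfunction, and elliptic regularity makes the associated minimizer $f = -2\log w$ smooth. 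This is what lets me treat $\lambda$ as a differentiable function along the flow.

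For the monotonicity in (1), fix a time $t_0$, let $f_0$ be the minimizer at $t_0$, and run $g$ by the Ricci flow \eqref{Ricci-steady} while solving the conjugate (backward) heat equation $\partial_t f = -\Delta_g f + |\nabla f|^2_g - \scal(g)$ with terminal value $f_0$ at $t_0$. The point of the conjugate equation is that it keeps $\int_M e^{-f}\, dV_g$ constant, so $\lambda(g(t)) \le \int_M(\scal + |\nabla f|^2_g)\, e^{-f}\, dV_g$ with equality at $t_0$. Differentiating the right-hand side and integrating by parts, the first-order terms cancel by the Euler--Lagrange equation and one is left with
\[
\frac{d}{dt}\lambda(g(t)) \ge 2\int_M \left| \ric(g) + \nabla^2 f \right|^2_g\, e^{-f}\, dV_g \ge 0.
\]
For a steady soliton the flow is stationary modulo diffeomorphism, $\lambda$ is diffeomorphism invariant, hence constant, and the minimizer realizes the rigidity case $\ric(g) + \nabla^2 f = 0$. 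Tracing gives $\Delta f = -\scal$, while the contracted second Bianchi identity gives $\nabla_i\scal = 2\,\ric_{ij}\nabla^j f = -\nabla_i|\nabla f|^2_g$, so $\scal + |\nabla f|^2_g \equiv C$ is constant. Combining these yields $\Delta(e^{-f}) = C\,e^{-f}$; integrating over the closed manifold forces $C=0$, so $e^{-f}$ is harmonic, hence constant. Therefore $\nabla^2 f = 0$ and $\ric(g) = 0$.

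The shrinker statement (2) follows the same template with $\mathcal{W}_-$ in place of the Dirichlet energy. Here I would evolve the triple $(g,f,\tau)$ by coupling the normalized flow \eqref{Ricci-shrinking} to the conjugate heat equation for $f$ together with $\partial_t\tau = -1$, again arranged so that the constraint in \eqref{shrinker-entropy} is preserved. The analogous Bochner-type computation produces
\[
\frac{d}{dt}\,\mathcal{W}_-(g(t),f(t),\tau(t)) = \frac{1}{(4\pi\tau)^{m/2}}\int_M 2\tau\left| \ric(g) + \nabla^2 f - \frac{1}{2\tau}\, g\right|^2_g e^{-f}\, dV_g \ge 0,
\]
whence $\mu_-$ and then $\nu_-$ are non-decreasing. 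For a shrinking soliton the normalized flow is again stationary modulo diffeomorphism, so $\nu_-$ is constant and the optimal pair $(f,\tau)$ satisfies $\ric + \nabla^2 f = \tfrac{1}{2\tau} g$, a gradient shrinking soliton with $c = \tfrac{1}{2\tau}$. Matching this constant with the one in \eqref{def_soliton} and subtracting the two soliton equations shows that the field $X - \nabla f$ has vanishing symmetrized covariant derivative, i.e.\ is a Killing field; hence the soliton is gradient.

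I expect the delicate points to be twofold. First, the \emph{analytic} justification that $\lambda$ and $\nu_-$ are genuinely differentiable along the flow: one must know that the minimizer varies regularly, that the conjugate heat flow exists on the relevant time interval, and that all integrations by parts are legitimate. Second, the \emph{rigidity} step: turning equality in the monotonicity formula into the algebraic soliton identity, and in particular securing the $C=0$ conclusion for the steady case and the Killing-field decomposition for the shrinker. By contrast, the monotonicity identities themselves are essentially a bookkeeping of Bochner integrations once the coupled system is correctly set up, with the real content being the exact cancellation of the first-order terms via the Euler--Lagrange equation.
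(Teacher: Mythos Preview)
The paper does not supply its own proof of this theorem; it is stated as a background result attributed to Perelman, so there is no proof in the paper to compare against directly. Your outline is Perelman's original argument via the coupled Ricci flow and conjugate heat equation, and it is correct. One minor remark on your shrinker rigidity step: once the monotonicity forces $\ric(g)+\nabla^2 f=\tfrac{1}{2\tau}g$ for the optimal $(f,\tau)$, you are already done, since this \emph{is} a gradient shrinking soliton equation; the extra matching-of-constants and Killing-field argument is not needed (though it is correct: on a closed manifold $\mathcal{L}_Y g=\kappa\, g$ integrates, via its trace, to $\kappa=0$).

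It is worth noting that when the paper proves the conical analogues (Theorems \ref{steadysolitonricciflat}, \ref{monotonicity_lambda}, \ref{shrinkingsolitongradient}, \ref{monotonicity_nu_minus}), it does \emph{not} use your coupled-flow approach. Instead it works directly with the first variation formula $\lambda(g)'(h)=-\int_M\langle h,\ric(g)+\nabla^2 f_g\rangle\,e^{-f_g}\,dV_g$ and the diffeomorphism invariance $\lambda(g)'(\mathcal{L}_X g)=0$: for a steady soliton $\ric+\mathcal{L}_X g=0$ one replaces $\mathcal{L}_X g$ by $\nabla^2 f_g$ at no cost and reads off $0=-\int_M|\ric+\nabla^2 f_g|^2 e^{-f_g}$, then finishes with an integration-by-parts argument close to yours. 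The advantage of the paper's route is that it avoids constructing the backward conjugate heat flow on a singular space; the analytic burden is shifted entirely onto justifying the first variation and the invariance under singular Lie derivatives (Lemma \ref{lambda-diffeo-invariance}), which is exactly where the asymptotics of the minimizer (Corollary \ref{asymptotics_minimizers}) enter. Your approach is the cleaner one in the smooth closed setting but would require substantial additional work to transplant to the conical case.
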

Additionally, in order to discuss the expanding case, Feldman, Ilmanen and Ni \cite{FIN}
introduced the Ricci expander entropy as follows. Consider the $\mu_+$-functional
\begin{equation}\label{expander-entropy}
\mu_+(g,\tau) :=\inf \left\{ \mathcal{W}_+(g,f,\tau) \mid \ 
f\in C^\infty(M), \frac{1}{(4\pi\tau)^{n/2}} \int_M e^{-f}\dv_g=1 \right\},
\end{equation} 
where the $\mathcal{W}_+$-functional $\mathcal{W}_+(g,f,\tau)$ is defined by 
\begin{equation}\label{Wplus-functional}
\mathcal{W}_+(g,f,\tau) :=\frac{1}{(4\pi\tau)^{n/2}}\int_M [\tau(|\nabla f|^2_g+\scal (g))-f+m] \, e^{-f}\dv_g.
\end{equation}
The Ricci expander entropy is then defined by 
\begin{equation}
\nu_+(g):= \sup \left\{ \mu_+(g,\tau) \mid \tau > 0\right\}.
\end{equation}
This infimum is finite and realized by a parameter $\tau_g$ if $\lambda(g)<0$, see \cite[p.\ 10]{FIN}.
The Ricci expander entropy can be used to discuss expanding 
Ricci solitions, which are Riemannian metrics $g$ such that for some vector field $X$ and a positve constant $c>0$,
\begin{equation}\label{def_soliton2}
\textup{Ric}(g) + \mathcal{L}_X g =-c\, g.
\end{equation} 
Any expanding Ricci soliton is up to a diffeomorphism a constant solution to the 
normalized Ricci flow
\begin{equation}\label{Ricci-expanding}
\partial_t g(t) = - 2 \, \textup{Ric} \,( g(t)) - 2c\, g(t), 
\quad g(0)=g.
\end{equation}
The following results are due to Feldman, Ilmanen and Ni.
\begin{thm}[\cite{FIN}] Let $(M,g)$ be a compact smooth Riemannian manifold. Then
the Ricci expander entropy  is non-decreasing along the normalized Ricci flow
\eqref{Ricci-expanding} and is constant on expanding Ricci solitons. Moreover, any 
expanding Ricci soliton is negative Einstein.
\end{thm}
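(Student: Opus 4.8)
The plan is to reduce all three assertions to Perelman's pointwise monotonicity formula for the $\mathcal{W}_+$-functional, run along a suitably coupled flow, and then to exploit its equality case. Since $\mathcal{W}_+(g,f,\tau)$ is invariant under the simultaneous scaling $(g,\tau)\mapsto(\lambda g,\lambda\tau)$ and under pullback by diffeomorphisms, the expander entropy $\nu_+$ is scale- and diffeomorphism-invariant. Consequently it suffices to analyze the unnormalized flow $\partial_t g=-2\,\textup{Ric}(g)$ and to transfer the conclusions to the normalized flow \eqref{Ricci-expanding} via the time-dependent rescaling $g(t)=\sigma(t)\,\widetilde g(s(t))$ that intertwines the two.

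First I would fix a terminal time and let $f$ evolve backward by the conjugate heat equation adapted to the expanding normalization, coupled to the flow of $g$, with $\partial_t\tau=1$. A direct computation then yields the monotonicity formula
\begin{equation*}
\frac{d}{dt}\mathcal{W}_+(g,f,\tau)=\frac{1}{(4\pi\tau)^{m/2}}\int_M 2\tau\,\left|\textup{Ric}(g)+\mathrm{Hess}\,f+\frac{1}{2\tau}g\right|^2 e^{-f}\dv_g\ \geq\ 0.
\end{equation*}
Choosing $f$ at the terminal time to be the minimizer realizing $\mu_+(g,\tau)$ and propagating it backward, the standard Perelman argument promotes this to monotonicity of $\mu_+$ and, after optimizing in $\tau$, of $\nu_+$ along the flow; scale invariance then carries the statement over to \eqref{Ricci-expanding}. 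This establishes the first assertion.

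For constancy on expanding solitons I would use that such a soliton is a fixed point of \eqref{Ricci-expanding} modulo the combined action of scalings and diffeomorphisms, so by invariance of $\nu_+$ the entropy is literally constant in $t$, whence $\tfrac{d}{dt}\mathcal{W}_+\equiv 0$ along the coupled flow emanating from it. Conversely, the equality case of the monotonicity formula forces the perfect square to vanish at the optimal pair $(f,\tau)$, i.e.
\begin{equation*}
\textup{Ric}(g)+\mathrm{Hess}\,f=-\frac{1}{2\tau}\,g,
\end{equation*}
which is precisely the gradient expanding soliton equation with $c=\tfrac{1}{2\tau}>0$. This step requires that the minimizer realizing $\nu_+$ exist, which holds because $\lambda(g)<0$ on an expander, and it is what turns an a priori non-gradient soliton into a gradient one with potential $f$.

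It remains to upgrade a compact gradient expanding soliton to a negative Einstein metric, and this is the step I expect to be the main obstacle. Writing $\lambda=-c<0$ and using the standard soliton identities $\scal+\Delta f=m\lambda$ and $\scal+|\nabla f|^2-2\lambda f=\mathrm{const}$, one derives for the weighted Laplacian $\Delta_f:=\Delta-\langle\nabla f,\nabla\,\cdot\,\rangle$ the identity $\Delta_f f=-2\lambda f+\mathrm{const}$. Decomposing $f=\bar f+\phi$ into its $e^{-f}\dv_g$-average and remainder and integrating against $e^{-f}\dv_g$ pins down the constant and leaves $-\Delta_f\phi=2\lambda\phi$. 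Since $-\Delta_f$ is non-negative and self-adjoint with respect to $e^{-f}\dv_g$ on the compact manifold $M$, while $2\lambda<0$, the only possibility is $\phi\equiv 0$. Thus $f$ is constant, $\mathrm{Hess}\,f=0$, and the soliton equation collapses to $\textup{Ric}(g)=-c\,g$, a negative Einstein metric. The genuinely delicate points I would treat with care are the existence and regularity of the entropy minimizer that guarantees the gradient equation, and the negativity of $\lambda$ that renders the spectral obstruction decisive; the contrast with the shrinking case, where $\lambda>0$ admits nontrivial eigenfunctions and hence nontrivial solitons, confirms that this sign is exactly where the rigidity is won.
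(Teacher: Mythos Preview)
Your argument is correct and is essentially the original route of Feldman--Ilmanen--Ni. Note, though, that the paper does not supply its own proof of this statement: it is quoted from \cite{FIN} as background for the smooth compact case, and the paper's work lies in the conical analogues (Lemma \ref{nuplus-diffeo-invariance}, Theorems \ref{expandingsolitoneinstein} and \ref{monotonicity_nu_plus}).

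It is still instructive to compare your proof with how the paper handles those analogues, because the strategies diverge at the first two steps. For monotonicity and for the passage from soliton to gradient soliton you run the coupled system (Ricci flow, conjugate heat equation for $f$ solved from a terminal time, $\partial_t\tau=1$) and read off both conclusions from the pointwise formula for $\tfrac{d}{dt}\mathcal{W}_+$ and its equality case. The paper instead works entirely through the first variation $\nu_+(g)'(h)$: it proves a diffeomorphism-invariance lemma asserting $\nu_+(g)'(\mathcal{L}_Xg)=0$ for admissible $X$, then evaluates $\nu_+(g)'(\partial_t g)$ along the Ricci de Turck flow to obtain monotonicity, and uses the same lemma together with scale invariance to replace $\mathcal{L}_Xg$ by $\nabla^2 f_g$ in the soliton equation. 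This avoids ever solving an auxiliary heat equation for $f$, which is the point in the singular setting: the coupled backward evolution you invoke would itself require a separate construction and asymptotic control near the cone tip, whereas the first-variation route only needs the static regularity of the minimizer $\omega_g$ established in Corollary \ref{asymptotics_minimizers}. For the final rigidity step (gradient expander $\Rightarrow$ negative Einstein) the two arguments coincide: your spectral statement $-\Delta_f\phi=2\lambda\phi$ with $2\lambda<0$ and $-\Delta_f\geq 0$ is exactly the integration-by-parts computation in the proof of Theorem \ref{expandingsolitoneinstein}, written in operator form.
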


Since the three entropies $\lambda, \nu_\pm$ are diffeomorphism invariant, instead of the 
Ricci flow one can equivalently consider the Ricci de Turck flow, which is given by the following equation.
\begin{equation}\label{RDF-0}
\partial_t g(t) = -2 \, \textup{Ric} (g(t)) + \mathcal{L}_{W(t)} g(t), \quad g(0) = g_0,
\end{equation}
where $W(t)$ is the de Turck vector field defined in terms of the Christoffel symbols for 
the metrics $g(t)$ and a reference metric $\widetilde{g}$\footnote{The reference metric 
$\widetilde{g}$ is often taken as the initial metric $\widetilde{g} = g_0$. In \cite{KrVe} the 
authors took the reference metric $\widetilde{g}$ as a Ricci flat conical metric, while the initial 
metric $g_0$ was a sufficiently small perturbation of $\widetilde{g}$.} 
\begin{equation}
W(t)^k = g(t)^{ij} \left(\Gamma^k_{ij}(g(t)) - \Gamma^k_{ij}(\widetilde{g})\right). 
\end{equation}
The de Turck vector field $W(t)$ yields a one parameter family of diffeomorphisms $\phi(t)$
and the pullback $\phi(t)^*g(t)$ solves the Ricci flow \eqref{Ricci-steady}. 
The normalized Ricci de Turck flow is defined, either by subtracting or by adding $2g(t)$
on the right hand side of the evolution equation in \eqref{RDF-0}, which yields after pullback by
the family of diffeomorphisms $\phi(t)$ a solution to \eqref{Ricci-shrinking} or \eqref{Ricci-expanding}, 
respectively. \medskip

The aim of this work is to extend these results to the setting of singular spaces.
Some preliminary results have been obtained by Dai and Wang \cite{DW1,DW2}, who 
established existence of minimizers for the Perelman's entropies  under certain conditions, 
but could not proceed further due to a priori weak regularity properties of these minimizers.

More precisely, they showed that the $\lambda$-functional and the shrinker entropy are defined if the scalar curvature of the Link satisfies $\scal (g_F)>n-1$. On the other hand, it was shown in recent work by Ozuch \cite{Ozu19} that both entropies are not defined (i.e. the infimum in the definition is $-\infty$) if $\scal (g_F)<n-1$. The case of equality is not understood in full generality.
 Building on top 
of the work of Dai and Wang, our first main result is an improved asymptotic expansion for the minimizers which is stated as follows:
\begin{thm}\label{asymptotics_minimizers2}
Let $(M^m,g)$ be a compact Riemannian manifold with an isolated conical singularity. Let $n=m-1$ and $(F^n,g_F)$ be the cross section of the conical part of the metric $g$ and assume that $\scal (g_F)=n(n-1)$.
	Let $\w_g$ be a minimizer in the definition of the $\lambda$-functional, shrinker or the expander entropy. 
	Then there exists an $\overline{\gamma}>0$ such that $\w_g$ admits a partial asymptotic expansion 
	$$\w_g(x,z) = \textup{const} + O(x^{{\overline{\gamma}}}), \ \textup{as} \ x\to 0,$$
	and moreover for $k\in\N$,
	$$|\nabla_g^k\w_g|_g(x,z)=O(x^{{\overline{\gamma}}-k}), \ \textup{as} \ x\to 0.$$
\end{thm}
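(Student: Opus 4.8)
The plan is to pass from the variational characterization of the minimizer to the Euler--Lagrange equation it satisfies, and then to feed that equation into the elliptic regularity theory for the conical (incomplete edge) Laplacian. Substituting $\w_g = e^{-f/2}$ turns the three functionals into a quadratic expression in $\w_g$ in the case of $\lambda$, and into semilinear expressions in the cases of $\nu_\pm$, the nonlinearity being the term $\w_g^2\log\w_g$ produced by $f\,e^{-f}$. In all three cases the first variation yields a second order equation of the schematic form
\[
-a\,\Delta_g\w_g + \scal(g)\,\w_g + R(\w_g) = 0,
\]
with a positive constant $a$, potential essentially $\scal(g)$, and $R(\w_g)$ collecting the remaining lower order contributions, including the Lagrange multiplier term and the nonlinear $\w_g\log\w_g$ term in the entropy cases. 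First I would import from the work of Dai and Wang \cite{DW1,DW2} that $\w_g$ lies in the natural $L^2$-based energy space and is strictly positive, so that the weak equation above holds.

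The decisive structural input is the borderline hypothesis $\scal(g_F)=n(n-1)$. For the exact cone $dx^2+x^2g_F$ one has $\scal = (\scal(g_F)-n(n-1))\,x^{-2}$, so in this case the leading $x^{-2}$ singularity of the potential cancels and $\scal(g)$ becomes at worst mildly singular near the tip. Consequently the indicial operator of $-a\,\Delta_g$ at $x=0$ coincides with that of the pure conical Laplacian, whose indicial roots are
\[
\gamma_j^{\pm} = -\frac{n-1}{2}\pm\frac{1}{2}\sqrt{(n-1)^2+4\mu_j},
\]
where $\mu_j\geq 0$ are the eigenvalues of $-\Delta_{g_F}$ on the cross section. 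The root $\gamma_0^+=0$, associated to the constant eigenfunction on $F$, accounts for the constant leading term of the expansion, and I would define $\overline{\gamma}$ as the smallest positive admissible rate, namely $\gamma_1^+$ possibly lowered by the decay rate of the subleading, non-exact part of $\scal(g)$.

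I would then treat $R(\w_g)$ as a tame perturbation and bootstrap. Once $\w_g$ is known to be bounded and bounded away from zero near the tip, $\log\w_g$ is bounded and $\w_g\log\w_g$ behaves as a bounded zeroth order potential; and if the leading constant vanishes, then $\w_g\to 0$ and $\w_g\log\w_g\to 0$, so the nonlinearity is subdominant in either case. The equation then falls within the scope of the incomplete edge elliptic theory underlying the H\"older spaces $\hho$, and the corresponding regularity upgrades the weak solution to a partially polyhomogeneous conormal function. Matching the $L^2$-admissible indicial roots forces $\w_g = \mathrm{const}+O(x^{\overline{\gamma}})$; the derivative bounds $|\nabla_g^k\w_g|_g=O(x^{\overline{\gamma}-k})$ then follow from conormality, since in the metric $dx^2+x^2g_F$ every application of $\nabla_g$ lowers the radial homogeneity by one.

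The step I expect to be the main obstacle is the interaction between the $\w_g\log\w_g$ nonlinearity and the conical regularity theory: one must secure a uniform positive lower bound for $\w_g$ near the singularity so that $\log\w_g$ does not destroy the indicial analysis, and one must verify that the nonlinear term genuinely contributes only to the error part of the expansion rather than shifting the indicial roots. A secondary technical point is to keep track of the subleading singular contributions to $\scal(g)$ arising from the non-exactness of the conical metric, and to confirm that they reduce $\overline{\gamma}$ at worst to a definite positive value rather than producing logarithms at the leading order.
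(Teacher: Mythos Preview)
Your proposal is correct in its broad outline and identifies the essential structural ingredients: the Euler--Lagrange equation, the cancellation of the leading $x^{-2}$ term in $\scal(g)$ under the hypothesis $\scal(g_F)=n(n-1)$, the indicial roots of the conical Laplacian, and the mildness of the $\w\log\w$ nonlinearity. However, the paper takes a genuinely different route.

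Rather than invoking elliptic edge theory directly, the paper proceeds \emph{parabolically}. After establishing essential self-adjointness of $L=\Delta+q\,\scal(g)$ (so that $\w_g\in\dom(L_{\max})=H^2_2(M)$), it constructs the heat kernel $H=e^{-t\Delta}$ as a conormal distribution on the blown-up heat space $\mathscr{M}^2_h$ and proves explicit mapping properties of $H(t)$ and of the Duhamel operator $H*$ between the scales $\mathscr{O}(x^{-N})$. The key step represents the minimizer as $\w_g=H(t)\w_g+H*\widetilde F(\w_g)$ for every $t>0$ (justified by an energy argument in the self-adjoint domain), and then iterates the mapping properties to improve from $\w_g\in\mathscr{O}(x^{-N})\cap L^2$ to $\w_g\in\mathscr{O}(1)$; the constant leading term and the value of $\overline{\gamma}$ then come from the explicit left/right-face asymptotics $\beta^*H=N_{\textup{lf}}(H)+O(\rho_{\textup{lf}}^{\overline{\gamma}})$ of the lifted heat kernel.

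What each approach buys: your elliptic route is conceptually more direct and would deliver the expansion in one stroke once the indicial analysis is in place, but it tacitly relies on a Mazzeo-type parametrix for conical operators with \emph{non-polyhomogeneous} coefficients --- recall that the higher-order term $h$ in the metric satisfies only $|x^k\nabla^k h|=O(x^\gamma)$, not smoothness up to $x=0$, so the standard edge package does not apply off the shelf. The paper's heat-kernel construction is designed precisely to accommodate this low regularity (the Volterra series produces only conormal-with-bounds errors, not polyhomogeneous ones), which is why the authors redo the Mooers/Mazzeo--Vertman construction in this weaker setting. Finally, your concern about a uniform positive lower bound on $\w_g$ is unnecessary: it suffices that $|\w\log\w|\le C_\varepsilon|\w|^{1-\varepsilon}$, which the paper records as $F(\w)\in\mathscr{O}(x^{-N-\varepsilon})$ and feeds directly into the bootstrap.
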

This result is proved below in Corollary \ref{asymptotics_minimizers}, where the value of $\overline{\gamma}$ is made explicit. In contrast to the results in \cite{DW1,DW2} we are  able to compute the asympototics of all derivatives of the minimizing functions. In addition, we get much better asymptotic rates under the assumption on the scalar curvature. To complete the picture, we also analyze the expander entropy, which has not been considered by Dai and Wang in the conical case. These asymptotics have two important consequences which are stated in the following.
\begin{thm}\label{steadysolitonricciflat2}
Let $(M^m,g)$ be a compact Riemannian manifold with an isolated conical singularity. Let $n=m-1$ and $(F^n,g_F)$ be the cross section of the conical part of the metric $g$. Then the following statements hold. 
\begin{itemize}
\item[(i)] Suppose that $m\geq5$ and $\scal (g_F)=n(n-1)$. Then, if $(M,g)$ is a Ricci soliton, it is gradient. Moreover, if $(M,g)$ is steady or expanding, it is Einstein.
\item[(ii)] In dimension $m=4$, the assertions of part (i) hold if $\ric (g_F)=(n-1)g_F$ .
\end{itemize}
\end{thm}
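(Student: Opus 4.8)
The plan is to read off both conclusions from the entropy monotonicity formulas already established in the singular setting, using the sharp decay of the minimizer from Theorem \ref{asymptotics_minimizers2} to control the conical tip.

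\emph{Step 1: every soliton is gradient.} Suppose $(M,g)$ is a Ricci soliton of one of the three types. Its associated self-similar solution of the Ricci flow differs from $g$ only by the de Turck diffeomorphisms and, in the shrinking and expanding cases, by scaling; since the singular Ricci de Turck flow preserves the isolated conical structure and the entropies $\lambda,\nu_\pm$ are invariant under diffeomorphism and scaling, the entropy adapted to the soliton type ($\lambda$ for steady, $\nu_-$ for shrinking, $\nu_+$ for expanding) is constant in $t$. The corresponding monotonicity formula has the schematic shape
\[
\frac{d}{dt}\,(\mathrm{entropy})=c_0\int_M\bigl|\ric(g)+\nabla^2\w_g-\rho\, g\bigr|^2\,e^{-\w_g}\,\dv_g\ge 0,
\]
with $\rho=0$ (steady), $\rho=\tfrac{1}{2\tau_g}$ (shrinking) or $\rho=-\tfrac{1}{2\tau_g}$ (expanding), where $\w_g$ is the minimizer realizing the entropy and $c_0>0$. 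Constancy forces the integrand to vanish, i.e.
\[
\ric(g)+\nabla^2\w_g=\rho\, g,
\]
which exhibits $g$ as a gradient soliton with potential $\w_g$ and proves the first assertion in all three cases. The only delicate point is that deriving the monotonicity formula on a cone involves integrating by parts on the truncated manifold $\{x\ge\varepsilon\}$; the expansion $\w_g=\textup{const}+O(x^{\overline\gamma})$ together with the derivative decay from Theorem \ref{asymptotics_minimizers2} makes every boundary term on $\{x=\varepsilon\}$ vanish as $\varepsilon\to0$.

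\emph{Step 2: steady and expanding solitons are Einstein.} Here I would combine the gradient equation with the classical soliton identities, now read on $M\setminus\{\textup{tip}\}$,
\[
\scal(g)+\Delta\w_g=m\rho,\qquad \Delta_{\w_g}\scal(g)=2\rho\,\scal(g)-2|\ric(g)|^2,
\]
where $\Delta_{\w_g}=\Delta-\langle\nabla\w_g,\nabla\,\cdot\,\rangle$. In the steady case $\rho=0$, so integrating the second identity against $e^{-\w_g}\dv_g$ and using that the weighted divergence contributes no boundary term at the tip gives $\int_M|\ric(g)|^2 e^{-\w_g}\dv_g=0$, hence $\ric(g)\equiv 0$ and $g$ is Ricci flat. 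In the expanding case $\rho=-c<0$ I would instead use Hamilton's identity $\scal(g)+|\nabla\w_g|^2-2\rho\,\w_g=\textup{const}$: evaluating it together with $\scal(g)=-mc-\Delta\w_g$ at a maximum and a minimum of $\w_g$ forces $\scal(g)\equiv-mc$ and $\w_g$ constant, whence $\ric(g)=-c\,g$ is negative Einstein. No such improvement is sought in the shrinking case, consistent with the existence of non-Einstein compact gradient shrinkers.

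\emph{Main obstacle.} The substantive difficulty is that both the integration by parts of Steps 1--2 and the extremum argument of Step 2 take place on a space with a conical tip, and this is exactly where the hypotheses enter. Near the tip $g=dx^2+x^2 g_F+\dots$, so $\dv_g\sim x^{n}\,dx\,dV_{g_F}$ while $\ric(g)=x^{-2}\bigl(\ric(g_F)-(n-1)g_F\bigr)+O(1)$; hence $|\ric(g)|^2\,\dv_g\sim x^{\,m-5}\,dx$, which is integrable at $x=0$ precisely when $m\ge5$. This is the origin of the dimension restriction in part (i), and in the borderline dimension $m=4$ one imposes $\ric(g_F)=(n-1)g_F$ to cancel the leading $x^{-2}$ term and restore integrability, yielding part (ii). The scalar-curvature hypothesis $\scal(g_F)=n(n-1)$ plays the parallel role for the $\lambda$- and shrinker functionals, removing the $x^{-2}$ singularity of $\scal(g)$ so that $\w_g$ exists with the good asymptotics of Theorem \ref{asymptotics_minimizers2}; these asymptotics are then what legitimize discarding the tip in the integral identities and in locating the extrema of $\w_g$ away from it. I expect the verification that no curvature or potential mass concentrates at the tip, rather than any of the curvature algebra, to be the crux of the argument.
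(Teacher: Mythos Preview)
Your outline has the right shape, but Step 1 as written relies on machinery that is not available under the hypotheses of the theorem. You invoke ``the singular Ricci de Turck flow preserves the isolated conical structure'' and then read off the monotonicity formula; however, existence of that flow (Theorem~\ref{RF}) requires tangential stability of the cross section, which Theorem~\ref{steadysolitonricciflat2} does not assume. The paper sidesteps this by never running a flow: it works directly with the first variation $\lambda(g)'(h)=-\int_M\langle h,\ric(g)+\nabla^2 f_g\rangle e^{-f_g}\dv_g$ (and the analogous formulas for $\nu_\pm,\mu_\pm$) and proves the key Lemma~\ref{lambda-diffeo-invariance} that $\lambda(g)'(\mathcal{L}_Xg)=0$ whenever $\mathcal{L}_Xg\in C^{1,\alpha}_{\textup{ie}}(M,S)_{-2}$. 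This is not automatic from ``diffeomorphism invariance'': the soliton vector field $X$ is a priori singular at the tip, and one must argue by approximation with compactly supported fields, which in turn requires a Fredholm decomposition for $P:Y\mapsto\tfrac12\mathcal{L}_Yg$ in weighted spaces. Your sketch does not touch this point. A related gap: you use $\nu_-$ (resp.\ $\nu_+$) in the shrinking (resp.\ expanding) case, but these are only defined when $\lambda(g)>0$ (resp.\ $<0$); the paper switches to the fixed-$\tau$ functionals $\mu_\pm$ precisely to remove that restriction.

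Your Step~2 also diverges from the paper. For steady solitons you integrate $\Delta_{f}\scal=-2|\ric|^2$; for expanding solitons you appeal to Hamilton's identity and a maximum principle. The paper instead shows directly that $f_g$ is constant: taking the trace of the gradient equation and inserting it into the Euler--Lagrange equation \eqref{eulerlagrangelambda} (or \eqref{nupluseulerlagrange}) gives an identity that, after multiplying by $f^\perp e^{-f_g}$ and integrating by parts over $M\setminus\mathscr{C}_\varepsilon(F)$, yields $\int_M|\nabla f^\perp|^2 e^{-f_g}\dv_g=0$ (plus a good-signed zeroth-order term in the expanding case). The boundary term is controlled by Corollary~\ref{asymptotics_minimizers}. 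Your maximum-principle route in the expanding case has a real obstacle: nothing in the asymptotics $\w_g=\textup{const}+O(x^{\overline\gamma})$ prevents the extremum of $f_g$ from being attained at the tip, and there you cannot set $\nabla f_g=0$ since $|\nabla f_g|=O(x^{\overline\gamma-1})$ may blow up. The paper's integration-by-parts argument avoids this issue entirely.
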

This result is proved below in Theorem \ref{steadysolitonricciflat}, Theorem \ref{shrinkingsolitongradient2}
and Theorem \ref{expandingsolitoneinstein2}. In \cite[Corollary 9.2]{DW1}, it has been shown that steady gradient solitons are Ricci flat but no assertion about non-gradient solitons or the expanding case has been made. Our third main result studies the entropies along the singular Ricci de Turck flow.
\begin{thm}\label{monotonicity}Let $(M,g)$ be a compact Riemannian manifold with an isolated conical singularity of dimension $\dim M \geq 4$ and a tangentially stable cross section.
	\begin{itemize}
		\item[(i)] Then the $\lambda$-functional is nondecreasing along the Ricci de Turck flow preserving conical singularities and constant only along Ricci flat metrics.
		\item[(ii)] Whenever defined, the shrinker and the expanding entropies are nondecreasing along the (normalized) Ricci de Turck flow  preserving conical singularities and constant only along shrinking and expanding solitons, respectively.
	\end{itemize}
	\end{thm}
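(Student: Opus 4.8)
The plan is to reproduce the monotonicity computations of Perelman \cite{Perelman} and Feldman--Ilmanen--Ni \cite{FIN}, using the sharp asymptotics of Theorem \ref{asymptotics_minimizers2} to control the behavior at the cone tip. Since $\lambda$ and $\nu_\pm$ are diffeomorphism invariant and the Ricci de Turck flow differs from the Ricci flow by the family of diffeomorphisms $\phi(t)$ generated by the de Turck vector field, it suffices to establish the classical monotonicity identities; the singular Ricci de Turck flow preserving the conical singularity exists by \cite{KrVe}, so all time derivatives below are taken along a genuine solution. I would first treat (i). Let $\w_{g(t)} = e^{-f(t)/2}$ be the ground state of the Schr\"odinger operator $-4\Delta_{g(t)} + \scal(g(t))$ with eigenvalue $\lambda(g(t))$, coupling $f$ to the metric through the conjugate heat equation so as to preserve the constraint $\int_M e^{-f}\dv_g = 1$. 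Differentiating $\lambda(g(t))$ and using the Euler--Lagrange equation for $\w_{g(t)}$ to eliminate the terms involving $\partial_t f$, the standard manipulation yields
$$\frac{d}{dt}\lambda(g(t)) = 2\int_M \bigl|\ric(g(t)) + \nabla^2 f(t)\bigr|^2 \, e^{-f(t)}\dv_{g(t)} \geq 0.$$

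The feature distinguishing the conical case from the compact smooth case is that \emph{every} integration by parts entering this computation must be justified on the open manifold. I would carry out the differentiation on the truncated manifold $M_\varepsilon = \{x \geq \varepsilon\}$, collect the boundary integrals over the cross section $\{x = \varepsilon\}$, and show they vanish as $\varepsilon \to 0$. This is exactly where Theorem \ref{asymptotics_minimizers2} enters: the expansion $\w_g = \textup{const} + O(x^{\overline{\gamma}})$ together with $|\nabla_g^k \w_g|_g = O(x^{\overline{\gamma}-k})$ bounds each boundary term by a positive power of $\varepsilon$ times the $O(x^{n})$ volume factor of the cross section, forcing the limit to be zero; the same estimates secure integrability of the right-hand side. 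For the rigidity statement, constancy of $\lambda$ forces $\ric(g) + \nabla^2 f \equiv 0$, so that $g$ is a gradient steady soliton, which is Ricci flat by \cite[Corollary 9.2]{DW1} (alternatively via the steady case of Theorem \ref{steadysolitonricciflat2}).

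Part (ii) follows the same scheme applied to the $\mathcal{W}_\pm$-functionals. Here I would first use that, for $\lambda(g) > 0$ (respectively $\lambda(g) < 0$), the optimal parameter $\tau_g$ is attained, so that at the optimum the $\tau$-derivative of $\mathcal{W}_\pm$ vanishes and only the metric variation contributes; carrying out the analogous differentiation and boundary analysis gives
$$\frac{d}{dt}\nu_\pm(g(t)) \geq \frac{2\tau}{(4\pi\tau)^{m/2}}\int_M \Bigl|\ric(g(t)) + \nabla^2 f(t) \pm \tfrac{1}{2\tau}\, g(t)\Bigr|^2 e^{-f(t)}\dv_{g(t)} \geq 0,$$
with equality precisely for shrinking, respectively expanding, gradient solitons. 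The boundary terms are again controlled by the asymptotics of Theorem \ref{asymptotics_minimizers2}, now for the minimizers of the $\mu_\pm$-functionals covered by that theorem. I expect the decisive obstacle to be precisely this vanishing of boundary contributions at the singularity: the plain decay of the minimizer is not enough, and one genuinely needs the sharp asymptotics of \emph{all} derivatives, which is why the improved expansion proved here—rather than the weaker regularity available to Dai--Wang—is indispensable, and why the hypotheses $\dim M \geq 4$ and tangential stability of the cross section (ensuring existence and cone-preservation of the flow via \cite{KrVe}, together with the scalar curvature normalization entering Theorem \ref{asymptotics_minimizers2}) are imposed.
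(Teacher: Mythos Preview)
Your proposal glosses over the step that actually carries the weight in the singular setting. You write that since $\lambda$ is diffeomorphism invariant and the de Turck flow differs from Ricci flow by the diffeomorphisms $\phi(t)$ generated by $W(t)$, it suffices to prove the monotonicity formula along Ricci flow. But in the conical setting the de Turck vector field satisfies only $|W(t)|_g = O(x^{-1+\gamma})$; its flow is not known to extend across the cone point, nor to preserve the class of conical metrics for which $\lambda$ is even defined. The Ricci flow itself is not constructed on this class of spaces---only the de Turck flow is (Theorem \ref{RF}). The same objection applies to your conjugate-heat-equation coupling: along the de Turck flow the evolution of $f$ would acquire an extra transport term by $W$, and you are back to having to control the contribution of $\mathcal{L}_W g$.

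The paper therefore works directly with the de Turck flow and computes $\lambda'(\partial_t g)$ via the first variation \eqref{lambdafunctional}. The key step is to show that $\lambda'(\mathcal{L}_X g) = 0$ for $X(t) = \grad f_{g(t)} + W(t)$---an \emph{infinitesimal} diffeomorphism invariance rather than a global one. This is Lemma \ref{lambda-diffeo-invariance} (and its analogues Lemmas \ref{numinus-diffeo-invariance}, \ref{nuplus-diffeo-invariance} for $\nu_\pm$), and its proof is where the real content lies: one uses Fredholm theory for the operator $Y \mapsto \tfrac12 \mathcal{L}_Y g$ on weighted H\"older spaces to show that any $X$ with $\mathcal{L}_X g \in C^{1,\alpha}_{\textup{ie}}(M,S)_{-2}$ can be taken in $C^{2,\alpha}_{\textup{ie}}(M,TM)_{-1-\delta}$, then approximates such $X$ by vector fields supported away from the singularity and passes to the limit using the bound $|\lambda'(h)| \leq C \|h\|_{\alpha,-2-\delta}$, which is exactly where the asymptotics of Corollary \ref{asymptotics_minimizers} enter. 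The case $m = 4$ requires a separate treatment. Your truncation-and-boundary argument is fine for the integrations by parts internal to the computation, but it does not by itself dispose of the de Turck contribution $\lambda'(\mathcal{L}_W g)$, which is the genuine obstacle.
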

This statement is proved below in Theorem \ref{monotonicity_lambda}, Theorem \ref{monotonicity_nu_minus}
and Theorem \ref{monotonicity_nu_minus2}.
All the above results also extend to the case of finitely many isolated conical singularities in an obvious way.

The present paper can also be seen as a continuation of a research program on the Ricci flow 
preserving the given singularity structure, initiated in two dimensions by Mazzeo, Rubinstein and Sesum \cite{MRS}, Yin \cite{Yin:RFO}, and in general dimensions by the second author jointly with Bahuaud in \cite{BV}, \cite{BV2}.

Let us mention additional references of direct importance to the present discussion. In 
the preceding work \cite{Ver-Ricci}, the second named author establishes short time existence 
of Ricci (de Turck) flow preserving an edge singularity, under an analytic assumption 
of tangential stability for the cross-section of the cone. 
Our joint work \cite{KrVe} then established stability of the singular Ricci flow for small perturbations 
of metrics with isolated conical singularities and provided an explicit characterization of tangential stability. 
\medskip

\emph{Acknowledgements.} The authors thank the Priority programme "Geometry at Infinity" of
the German Research Foundation for financial and intellectual support.
The second author thanks Matthias Lesch for useful discussions.

\section{Preliminaries on manifolds with conical singularities}

In this section we gather the preliminary results obtained by the authors 
in \cite{Ver-Ricci} and \cite{KrVe}, as well as by Dai and Wang in \cite{DW1} and \cite{DW2}.

\subsection{Isolated conical singularities} \ \medskip

\begin{defn}\label{cone-metric}
Consider a compact smooth manifold $\overline{M}$ with boundary 
$\partial M = F$ and open interior denoted by $M$. 
Let $\overline{\cC(F)}$ be a tubular neighborhood of the boundary, 
with open interior $\cC(F) = (0,1)_x \times F$, where $x$ is a 
defining function of the boundary. Consider a smooth Riemannian metric
$g_F$ on the boundary $F$ with $n=\dim F$. An incomplete Riemannian metric $g$ on $M$ 
with an isolated conical singularity is then defined to be smooth away from the
boundary and 
\begin{equation*}
g \restriction \cC(F) = dx^2 + x^2 g_F + h,
\end{equation*}
where the higher order term $h$ is smooth on $\cC(F)$ with the following asymptotics 
at $x=0$. Let $\overline{g} = dx^2 + x^2 g_F$ denote the exact conical part of the metric $g$ over
$\cC(F)$ and $\nabla_{\overline{g}}$ the corresponding Levi Civita connection. Then we require that
for some $\gamma > 0$ and all integer $k \in \N_0$ the pointwise norm 
\begin{align}\label{higher-order}
| \, x^k \nabla_{\overline{g}}^k h \, |_{\overline{g}} = O(x^\gamma), \ x\to 0. 
\end{align}
\end{defn}

\begin{remark}
We emphasize here that we do not assume that the higher order term $h$
is smooth up to $x=0$ and do not restrict the order $\gamma>0$. In that sense
the notion of conical singularities in the present discussion is more 
general than the classical notion of conical singularities where $h$ is usually assumed
to be smooth up to $x=0$ with $\gamma = 1$.
\end{remark}

We call $(M,g)$ a compact space with an isolated conical singularity, 
or a \emph{conical manifold}\footnote{The reader should not confuse the notion of conical manifolds here, 
with the notion of \emph{cone-like} manifolds in \cite{DW1}. The cone-like manifolds in \cite{DW1} refer to manifolds 
with exact conical singularities, where $h\equiv 0$. The notion of conical manifolds in the present paper allows for 
perturbations $h$ as in \eqref{higher-order}, despite similarity in language with \cite{DW1}.}. 
The definition naturally extends to conical manifolds 
with finitely many isolated conical singularities. Since the analytic arguments are local 
in nature, we may assume without loss of generality that $M$ has a single conical singularity only.
\medskip

Our arguments on the Ricci flow rely on the assumption of tangential stability, which has been characterized
equivalently in our previous work \cite[Theorem 1.3]{KrVe}. We take this characterization as a definition.
 
\begin{defn}
	Let $(F,g_F)$ be a compact Einstein manifold of dimension $n\geq 2$ with constant $(n-1)$. 
	We write $\Delta_E$ for its Einstein operator, and denote the Laplace Beltrami 
	operator by $\Delta$. Then $(F,g_F)$ is said to be tangentially stable if 
	$\mathrm{Spec}(\Delta_E|_{TT})\geq0$ and $\mathrm{Spec}(\Delta)\setminus 
	\left\{0\right\}\cap (n,2(n+1))=\varnothing$. Similarly, $(F,g_F)$ is strictly tangentially 
	stable if and only if $\mathrm{Spec}(\Delta_E|_{TT})>0$ and $\mathrm{Spec}
	(\Delta)\setminus \left\{0\right\}\cap [n,2(n+1)]=\varnothing$. 
\end{defn}

Furthermore, \cite[Theorem 1.4]{KrVe} also provides an extensive list of examples 
where such an assumption of tangential stability is satisfied.
The assumption of tangential stability will be used here whenever our arguments 
employ the singular Ricci de Turck flow, which is constructed in 
\cite{Ver-Ricci} and \cite{KrVe} under this assumption. 

\subsection{Geometry of conical manifolds} \ \medskip

In this subsection we introduce the notions of b-tangent and b-cotangent bundles as well as 
some associated bundles, as in the b-calculus by Melrose \cite{Mel:TAP, Mel2}.
Consider local coordinates $(z)=(z_1,\ldots, z_n)$ on $F$, $\dim F = n$. Then $(x,z)$ defines 
local coordinates on the conical neighborhood $\cC(F) \subset M$. 
The Lie algebra of b-vector fields $\V_b$ consists by definition of smooth vector fields over $\overline{M}$ 
that are tangent to the boundary $\partial M = \{0\} \times F$. 
In local coordinates $(x,z)$, b-vector fields $\V_b$ are locally generated by 
\[
\left\{x\frac{\partial}{\partial x},  
\partial_z = \left( \frac{\partial}{\partial z_1},\dots, \frac{\partial}{\partial z_n} \right)\right\},
\]
with coefficients being smooth on $\overline{M}$. 
The b-tangent bundle ${}^bTM$ is defined by the requirement that the 
b-vector fields form a spanning set of section for ${}^bTM$, i.e. $\mathcal{V}_b=C^\infty(\overline{M},{}^bTM)$. 
Here, we are rather interested in the dual bundle, the b-cotangent bundle ${}^bT^*M$ 
which is generated locally by the following one-forms
\begin{align}\label{triv}
\left\{\frac{dx}{x}, dz_1,\dots,dz_n\right\}.
\end{align}
The differential form $\frac{dx}{x}$ is singular at $x=0$ in the usual sense, but is smooth as section of 
the b-cotangent bundle ${}^bT^*M$. Consider any extension of $x:\overline{\cC(F)} \to [0,1]$ 
to a smooth function on $\overline{M}$, nowhere-vanishing on $M$. Then the incomplete 
b-tangent space ${}^{ib}TM$ is defined by the space of sections $x C^\infty(\overline{M},{}^{ib}TM) := 
C^\infty(\overline{M},{}^{b}TM)$. The central bundle, used throughout the present discussion will be 
the dual incomplete b-cotangent bundle 
${}^{ib}T^*M$ which is related to its complete counterpart ${}^bT^*M$ by 
\begin{align}
C^\infty(\overline{M},{}^{ib}T^*M) = 
x C^\infty(\overline{M},{}^{b}T^*M), 
\end{align}
with the spanning sections given locally in the singular neighborhood $\cC(F)$ by 
\begin{align}\label{triv2}
\left\{dx, x dz_1,\dots, x dz_n\right\}.
\end{align}
With respect to the notation we just introduced, the exact part $\overline{g}$ of the conical 
metric $g$ in the Definition \ref{cone-metric} is a smooth section of 
the vector bundle of the symmetric $2$-tensors of the incomplete
b-cotangent bundle ${}^{ib}T^*M$, i.e. $\overline{g} \in C^\infty (\textup{Sym}^2({}^{ib}T^*M)
\restriction \overline{\cC(F)})$.

\subsection{Weighted Sobolev spaces on conical manifolds} \ \medskip

We continue in the setup of a conical manifold $(M,g)$. 
Let $\nabla_g$ denote the corresponding Levi Civita covariant derivative. 
Let the boundary defining function $x:\cC(F) \to (0,1)$ be extended smoothly to $\overline{M}$, 
nowhere vanishing on $M$. We consider the space 
$L^2(M)$ of square-integrable scalar functions with respect to the volume form of $g$.
We define for any $s\in \N$ and any $\delta \in \R$ the weighted Sobolev space $H^s_\delta (M)$ as 
the closure of compactly supported smooth functions $C^\infty_0(M)$ under
\begin{align}\label{Sobolev-norm}
\left\|u\right\|_{H^s_\delta} := \sum_{k=0}^s \| x^{k-\delta} 
  \nabla^k_g u \|_{L^2}.
\end{align}
Note that $L^2(M,E) = H^0_{0}(M,E)$ by construction. 

\begin{remark}
An equivalent norm on the weighted Sobolev space $H^s_\delta (M)$ can be defined 
for any choice of local bases $\{X_1, \ldots, X_m\}$ of $\V_b$ as follows. We omit the 
subscript $g$ from the notation of the Levi Civita covariant derivative and write 
\begin{align}
\left\|u\right\|_{H^s_\delta} = \sum_{k=0}^s \sum_{(j_1, \cdots, j_k)} \| \, x^{-\delta} 
\left( \nabla_{X_{j_1}} \circ \cdots \circ \nabla_{X_{j_k}} \right) u \, \|_{L^2}.
\end{align}
\end{remark}

The next lemma asserts that the 
weighted Sobolev space $H^s_\delta (M)$ is well-defined, i.e. independent of the choice of a conical metric $g$.

\begin{lem}\label{norm-equivalence}
Consider any two incomplete Riemannian metrics $g_0$ and $g$ with isolated conical 
singularities, as in Definition \ref{cone-metric}. Then the weighted Sobolev norms $\left\| \cdot \right\|_{0}$
and $\left\| \cdot \right\|$ defined in \eqref{Sobolev-norm} with respect to $g_0$ and $g$, respectively, 
are equivalent. 
\end{lem}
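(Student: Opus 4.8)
The plan is to reduce the global statement to a uniform comparison on the conical collar $\cC(F)=(0,1)_x\times F$. Away from the singularity both metrics are smooth and nondegenerate on a compact region, where the classical equivalence of the Sobolev norms induced by two smooth metrics applies and the weight $x^{k-\delta}$ is bounded above and below; so nothing is lost there. On the collar I would write $g=\overline g+h$ and $g_0=\overline g_0+h_0$ with exact parts $\overline g=dx^2+x^2g_F$, $\overline g_0=dx^2+x^2g_{F,0}$ and higher order terms controlled by \eqref{higher-order}, and carry out the entire comparison in the b-frame $\{x\partial_x,\partial_{z_1},\dots,\partial_{z_n}\}$, which is the same for both metrics because it depends only on the smooth structure of $\overline M$.

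The first quantitative step is to compare the two volume forms. Writing $dV_g/dV_{\overline g}=\sqrt{\det(\identity+\overline g^{-1}h)}$, where $\overline g^{-1}h$ is viewed as an endomorphism of ${}^{ib}TM$, the bound $|h|_{\overline g}=O(x^\gamma)$ from \eqref{higher-order} gives $dV_g=(1+O(x^\gamma))\,dV_{\overline g}$, and likewise for $g_0$. Since $dV_{\overline g}/dV_{\overline g_0}=\sqrt{\det g_F/\det g_{F,0}}$ is a positive smooth function on the compact cross section $F$, it is pinched between two positive constants. Hence $dV_g$ and $dV_{g_0}$ are uniformly comparable on the collar, and therefore on all of $M$. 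The same estimates show that $g$ and $g_0$ are uniformly equivalent as quadratic forms on ${}^{ib}TM$, so the pointwise tensor norms $|\cdot|_g$ and $|\cdot|_{g_0}$ they induce on every tensor power of ${}^{ib}T^*M$ are comparable as well.

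To compare the derivative terms I would invoke the Remark following \eqref{Sobolev-norm}, which replaces each Sobolev norm by the equivalent b-frame norm $\sum_{k\le s}\sum_{(j_1,\dots,j_k)}\|x^{-\delta}\nabla_{X_{j_1}}\!\circ\cdots\circ\nabla_{X_{j_k}}u\|_{L^2}$ computed with the same fixed b-frame. With the measures already matched, the only remaining metric dependence sits in the Levi-Civita connection. I would then show by induction on $k$ that $\nabla^k_g u$ and $\nabla^k_{g_0}u$ have comparable b-frame components: for $k=1$ they agree, since on functions the covariant derivative is the metric-independent differential, and for $k\ge2$ their difference is a sum of lower-order b-derivatives of $u$ weighted by the difference of the b-Christoffel symbols of the two connections. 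The induction closes provided these b-Christoffel symbols, and their difference, are uniformly bounded, with uniformly bounded b-derivatives, up to $x=0$.

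Establishing that last uniform boundedness is the main obstacle. It amounts to the statement that a conical metric in the sense of Definition \ref{cone-metric} is an incomplete b-metric of bounded b-geometry: the connection coefficients of $\overline g$ in the b-frame are explicit and $x$-independent, determined by the Levi-Civita data of $(F,g_F)$, while the corrections coming from $h$ are controlled, together with all their b-derivatives, by the full strength of \eqref{higher-order} rather than merely the zeroth-order bound $|h|_{\overline g}=O(x^\gamma)$. Once this is in hand, combining the volume-form comparison, the tensor-norm comparison and the inductive connection estimate yields the two-sided bound $\|\cdot\|_{0}\asymp\|\cdot\|$, completing the proof.
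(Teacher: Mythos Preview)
Your argument is correct and complete in outline; the route differs from the paper's in packaging rather than substance. The paper does not pass through the b-frame formulation or the language of bounded b-geometry. Instead it first reduces to the case where $g_0=\overline g$ is the \emph{exact} conic metric and $g=\overline g+h$ is its perturbation, then works directly with the difference tensor $\Phi:=\nabla_g-\nabla_{g_0}$. From \eqref{higher-order} one reads off $|\nabla_{g_0}^k\Phi|_{g_0}=O(x^{-k-1+\gamma})$, and an explicit combinatorial identity expands $\nabla_g^k$ as a sum of terms $\nabla_{g_0}^{m_1}\Phi\circ\cdots\circ\nabla_{g_0}^{m_q}\Phi\circ\nabla_{g_0}^\ell$; plugging in the bounds gives the norm comparison in one line. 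The general case of two arbitrary conical metrics then follows by transitivity through the exact cone.

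What your approach buys is a cleaner conceptual picture (everything is ``bounded b-geometry'') and it handles two metrics with distinct cross-section metrics $g_F$, $g_{F,0}$ in a single pass rather than via triangulation. What the paper's approach buys is that the ``main obstacle'' you flag --- uniform boundedness of b-Christoffel symbols and all their b-derivatives --- is never stated as a separate lemma: it is absorbed into the single estimate $|\nabla_{g_0}^k\Phi|=O(x^{-k-1+\gamma})$, which follows immediately from \eqref{higher-order} once one notes that $\Phi$ is built algebraically from $g^{-1}$ and $\nabla_{g_0}(g-g_0)$. If you want to tighten your write-up, that is the place to make the dependence on the full hierarchy of bounds in \eqref{higher-order} explicit rather than gestural.
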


\begin{proof}
We first prove the statement for $g_0$ being an exact conic
metric $g_0\restriction \cC(F) = dx^2 + x^2 g_F$ near the conic singularity, and $g$ being its 
higher order perturbation. We begin with the observation that by \eqref{higher-order} for any integer $k\in \N_0$
we find as $x\to 0$
\begin{equation*}
\begin{split}
&| \, \nabla^k_{g_0} \left(g-g_0\right) \, |_{g_0} = O(x^{-k + \gamma}), \\
&| \, \nabla^k_{g_0} \left(g-g_0\right) \, |_{g} = O(x^{-k + \gamma}).
\end{split}
\end{equation*}
Consequently, writing $\Phi := \nabla_g - \nabla_{g_0}$, we obtain as $x\to 0$
$$
| \, \nabla_{g_0}^k \Phi \, |_{g_0} = O(x^{-k-1+\gamma}), \quad 
| \, \nabla_{g_0}^k \Phi \, |_{g} = O(x^{-k-1+\gamma}).
$$
Given any $k \in \N_0$, we want to express $\nabla_g$ in terms of 
$\nabla_{g_0}$. Consider any $\ell \in \N_0$ such that $k \geq \ell$. 
Let us write 
$$
J_{k, \ell} := \{ (m_1, \cdots, m_q) \subset \N_0 \, | \,  
\sum_{i=1}^q (m_i + 1) = k-\ell, q \in \N \}.
$$ 
Then we find by induction for any $k\in \N_0$
$$
\nabla_g^k = \sum_{\ell = 0}^k \sum_{J_{k, \ell}} 
\nabla_{g_0}^{m_1} \Phi \circ \cdots \circ \nabla_{g_0}^{m_1} \Phi \circ \nabla^\ell_{g_0}.
$$
We can now compute for any $u \in H^s_\delta(M)$ as $x\to 0$
\begin{align*}
\sum_{k=0}^s x^{k-\delta} | \, \nabla^k_g u \, |_g & = \sum_{k=0}^s x^{k-\delta} \sum_{\ell = 0}^k \sum_{J_{k, \ell}} 
| \, \nabla_{g_0}^{m_1} \Phi \circ \cdots \circ \nabla_{g_0}^{m_1} \Phi \circ \nabla^\ell_{g_0} u \, |_g
\\ & = \sum_{k=0}^s x^{k-\delta} \sum_{\ell = 0}^k \sum_{J_{k, \ell}} 
O(x^{\ell - k})| \, \nabla^\ell_{g_0} u \, |_g \\ & = \sum_{\ell=0}^s O(x^{\ell-\delta})
| \, \nabla^\ell_{g_0} u \, |_g = \sum_{\ell=0}^s O(x^{\ell-\delta})
| \, \nabla^\ell_{g_0} u \, |_{g_0}.
\end{align*}
We conclude that the weighted Sobolev norms defined with respect to $g$ and $g_0$
are equivalent. Since the computation above holds for arbitrary $\gamma \geq 0$, 
the statement holds for any two metrics in Definition \ref{cone-metric}.
\end{proof}

\subsection{Weighted H\"older spaces on conical manifolds}\label{spaces-section} \ \medskip

This section is basically a repetition of the corresponding definitions in \cite{Ver-Ricci}
in the case of isolated conical singularities, cf. \cite{KrVe}. We consider a manifold $(M,g)$ 
with isolated conical singularities. Due to the local structure of constructions, we may assume 
without loss of generality that we have just one conical singularity. All constructions extend easily to the case of 
multiple conical ends.\medskip

The H\"older spaces presented here, are the time-independent versions of the spaces
that arise in the construction of the singular Ricci de Turck flow 
in the preceding work \cite{Ver-Ricci} and \cite{KrVe}. Regularity in time is not relevant 
in the present discussion, and therefore we decided to simplify the setting by presenting
only the time-independent H\"older spaces here.

\begin{defn}\label{hoelder-A}
Consider the distance function $d_M(p,p')$ between any two points $p,p'\in M$, 
defined with respect to the conical metric $g$, and given in terms of the local coordinates 
$(x,z)$ over the singular neighborhood $\cC(F)$ equivalently by
\begin{align*}
d_M((x,z), (x',z'))=\left(|x-x'|^2+(x+x')^2|z-z'|^2\right)^{\frac{1}{2}}.
\end{align*}
The H\"older space $\ho(M), \A\in [0,1),$ consists of functions 
$u(p)$ that are continuous on $\overline{M}$ with finite $\A$-th H\"older 
norm
\begin{align}\label{norm-def}
\|u\|_{\A}:=\|u\|_{\infty} + \sup \left(\frac{|u(p)-u(p')|}{d_M(p,p')^{\A}}\right) <\infty.
\end{align}
The supremum is taken over all $(p,p') \in \overline{M}^2$\footnote{As explained in 
\cite{Ver-Ricci} we can assume without loss of generality that 
the tuples $(p,p')$ are always taken from within the same coordinate 
patch of a given atlas.}. 
\end{defn} 

We now extend the notion of H\"older spaces to sections of the  
vector bundle $S=\textup{Sym}^2({}^{ib}T^*M)$ of symmetric $2$-tensors. 

\begin{defn}\label{S-0-hoelder}
The Riemannian metric $g$ yields a fibrewise inner product on $S$, denoted again by $g$.
The H\"older space $\ho (M, S)$ consists by definition of all sections $\w$ of
$S$ which are continuous on $\overline{M}$, 
such that for any local orthonormal frame 
$\{s_j\}$ of $S$, the scalar functions $g(\w,s_j)$ are $\ho (M)$.
\medskip

The $\A$-th H\"older norm of $\w$ is defined using a partition of unity
$\{\phi_j\}_{j\in J}$ subordinate to a cover of local trivializations of $S$, with a 
local orthonormal frame $\{s_{jk}\}$ over $\supp (\phi_j)$ for each $j\in J$. We put
\begin{align}\label{partition-hoelder-2}
\|\w\|^{(\phi, s)}_{\A}:=\sum_{j\in J} \sum_{k} \| g(\phi_j \w,s_{jk}) \|_{\A}.
\end{align}
\end{defn}

Norms corresponding to different choices of $(\{\phi_j\}, \{s_{jk}\})$
are equivalent and we may drop the upper index $(\phi, s)$ from notation.
\medskip

We now turn to weighted and higher order H\"older spaces, where the weights 
are defined in terms of powers of the boundary
defining function $x:\cC(F) \to (0,1)$, extended smoothly to $\overline{M}$, 
nowhere vanishing on $M$. 

\begin{defn}\label{funny-spaces}
\begin{enumerate}
\item The weighted H\"older space for $\gamma \in \R$ is
\begin{align*}
&x^\gamma \ho(M, S) := \{ \, x^\gamma \w \mid \w \in \ho(M, S) \, \} 
\\ &\textup{with H\"older norm} \ \| x^\gamma \w \|_{\A, \gamma} := \|\w\|_{\A}.
\end{align*}
\item The hybrid weighted H\"older space for $\gamma \in \R$ is
\begin{align*}
&\ho(M, S)_{, \gamma}  := x^\gamma \ho(M, S)  \, \cap \, 
x^{\gamma + \A} \mathcal{C}^0_{\textup{ie}}(M, S) \\
&\textup{with H\"older norm} \  \| \w \|'_{\A, \gamma} := \|x^{-\gamma} 
\w\|_{\A} + \|x^{-\gamma-\A} \w\|_\infty.
\end{align*}
\item The weighted higher order H\"older spaces, which specify regularity of solutions 
under application of the Levi Civita covariant derivative $\nabla$ of $g$ on symmetric $2$-tensors
and time differentiation are defined for any 
$\gamma \in \R$ and $k \in \N$ by\footnote{Differentiation is a priori 
understood in the distributional sense.}
\begin{equation*}
\begin{split}
&\hok (M, S)_\gamma = \{\w\in \ho_{,\gamma} \mid  \nabla_{\V_b}^j
\w \in \ho_{,\gamma} \ \textup{for any} \ j \leq k \}, \\
&\hok (M, S)^b_\gamma = 
\{u\in \ho \mid  \nabla_{\V_b}^j u \in 
x^\gamma\ho \ \textup{for any} \ j \leq k\},
\end{split}
\end{equation*}
where the upper index b in the second space indicates the fact that despite
the weight $\gamma$, the solutions $u \in \hok (M, S)^b_\gamma$
are only bounded, i.e. $u\in \ho$. The corresponding H\"older norms are defined 
using local bases $\{X_i\}$ of $\V$ and $\mathscr{D}_k:=\{\nabla_{X_{i_1}} \circ \cdots \circ 
\nabla_{X_{i_j}} \mid j \leq k\}$ by
\begin{equation*}
\begin{split}
&\|\w\|_{k+\A, \gamma} = \sum_{j\in J} \sum_{X\in \mathscr{D}_k} \| X (\phi_j \w) \|'_{\A, \gamma}
+ \|\w\|'_{\A, \gamma}, \quad \textup{on} \ \hho (M, S)_\gamma, \\
&\|u\|_{k+\A, \gamma} = \sum_{j\in J} \sum_{X\in \mathscr{D}_k} \| X (\phi_j u) \|_{\A, \gamma}
+ \|u\|_{\A}, \quad \textup{on} \ \hho (M, S)^b_\gamma.
\end{split}
\end{equation*} 
\item In case of $\gamma=0$ we just omit the lower weight index and write
e.g. $\hok (M, S)$ and $\hok (M, S)^b$.
\end{enumerate}
\end{defn} 

The H\"older norms for different choices of local bases $\{X_1, \ldots, X_m\}$ of $\V_b$ and different choices
of Riemannian metrics $g$ with isolated conical singularities, are equivalent due to compactness of $M$ and $F$
by an argument as in Lemma \ref{norm-equivalence}. \medskip

The vector bundle $S$ decomposes into a direct sum of sub-bundles
\begin{align}
S= S_0 \oplus S_1, 
\end{align}
where the sub-bundle $S_0=\textup{Sym}_0^2({}^{ib}T^*M)$
is the space of trace-free (with respect to the fixed metric $g$) symmetric $2$-tensors,
and $S_1$ is the space of pure trace (with respect to the fixed metric $g$) symmetric 
$2$-tensors. The sub bundle $S_1$ is trivial real vector bundle over $M$ of rank 1.
\medskip

Definition \ref{funny-spaces} extends ad verbatim to sections of $S_0$ and $S_1$.
Since the sub-bundle $S_1$ is a trivial rank one real vector bundle, its sections
correspond to scalar functions. Since the regularity assumptions needed for sections of $S_0$
are slightly different from the regularity assumptions for sections of $S_1$, we introduce
hybrid H\"older spaces.

\begin{defn}\label{H-space}
Let $(M,g)$ be a compact conical manifold and 
assume that the conical cross section $(F,g_F)$ is
strictly tangentially stable. Then we define
\begin{equation*}
\cH^{k, \A}_{\gamma} (M, S) := \hok (M, S_0)_{\gamma}
\ \oplus \ \hok (M, S_1)^b_{\gamma}.
\end{equation*}
If $(F, g_F)$ is tangentially stable but not 
strictly tangentially stable, we set instead
\begin{equation*}
\cH^{k, \A}_{\gamma} (M, S) := \hok (M, S)^b_{\gamma}.
\end{equation*}
\end{defn}

\subsection{Singular Ricci de Turck flow} \ \medskip

We shall present here the short time existence result obtained by
the second author in \cite{Ver-Ricci} in a simple way, which is sufficient 
for the purpose of the present discussion. We consider a compact manifold $(M,g_0)$ with an isolated 
conical singularity. We study the Ricci de Turck flow with $g_0$ as the reference
and the initial metric 
\begin{equation}\label{RDT}
\partial_t g(t) = -2 \, \textup{Ric} (g(t)) + \mathcal{L}_{W(t)} g(t), \quad g(0) = g_0,
\end{equation}
where $W(t)$ is the de Turck vector field defined in terms of the Christoffel symbols for 
the metrics $g(t)$ and the reference metric $\widetilde{g}$ 
\begin{equation}
W(t)^k = g(t)^{ij} \left(\Gamma^k_{ij}(g(t)) - \Gamma^k_{ij}(\widetilde{g})\right). 
\end{equation}
While the reference metric $\widetilde{g}$ is usually taken as the initial metric $g_0$, 
in case of $\widetilde{g}$ being Ricci flat, the initial metric $g_0$ can be chosen as a sufficiently small perturbation 
of $\widetilde{g}$. This case has been the focal point of our work in \cite{KrVe}, where stability of the Ricci de
Turck flow in the singular setting has been addressed. \medskip

We need to impose additional conditions for the Ricci de Turck flow to exist in the singular setting. 
\begin{defn}\label{admissible}
Let $(M,g_0)$ be a conical manifold with an isolated conical singularity.
Then the conical metric $g_0$ is said to be \emph{admissible}, if it satisfies the following assumptions 
for $\gamma>0$ as in \eqref{higher-order}, all $k \in \N$  and some $\alpha \in (0,1)$. 
\begin{enumerate}
\item The cross section $(F,g_F)$ is assumed to be tangentially stable.
\item The Laplace Beltrami operator $\Delta_F$ of $(F, g_F)$ satisfies 
$\Delta_F \restriction (\ker \Delta_F)^{\perp}\geq \dim F$.
\item Let $\scal(g_0)$ denote the scalar curvature of $g$ and 
$\textup{Ric}^\circ (g_0)$ the trace-free part of the Ricci curvature tensor. Then we assume
\footnote{In view of Definition \ref{cone-metric} \eqref{higher-order} the condition (3) is satisfied
if the leading exact part $\overline{g} = dx^2+x^2g_F$ of the conical metric $g_0$ with 
$g_0\restriction \mathscr{C}(F)=\overline{g} + h$ is Ricci flat, and the higher order term $h$ 
not only satisfies \eqref{higher-order}, but in particular is an element of 
$\mathcal{C}^{k+3,\alpha}_{\textup{ie}}(M,S)_\gamma$.}
\begin{equation}
\begin{split}
&\scal(g_0) \in x^{-2+\gamma} \mathcal{C}^{k+1,\A}_{\textup{ie}}(M, S_1), \\
&\textup{Ric}^\circ (g_0) \in \mathcal{C}^{k+1,\A}_{\textup{ie}}(M, S_0)_{-2+\gamma}.
\end{split}
\end{equation}
\item For any $X_1, \ldots, X_4 \in C^\infty(\overline{M}, {}^{ib}TM)$
we have for the curvature $(0,4)$-tensor 
$$Rm(g_0)(X_1, X_2, X_3, X_4) \in x^{-2} \mathcal{C}^{k+1,\A}_{\textup{ie}}(M).$$ 
\end{enumerate}
We call $g_0 + h$ an \emph{admissible perturbation} if $h \in \cH^{k+2, \A}_{\gamma} (M, S)$\footnote{In case
$h \in \cH^{k+2, \A}_{\gamma'} (M, S)$ with $\gamma' < \gamma$, we can simply replace $\gamma$ by $\gamma'$.}.
\end{defn}
\begin{remark}
	Note that if $(F,g_F)$ is Einstein, $\mathrm{(2)}$ follows from $\mathrm{(1)}$ by the Obata-Lichnerowicz eigenvalue estimate \cite{Ob62}.
	\end{remark}

The admissibility condition $\Delta_F \restriction (\ker \Delta_F)^{\perp}\geq \dim F$ sharpens the condition that $(F,g_F)$
is tangentially stable and is due to analytic arguments in the sections below; it is actually 
not needed for the result on the existence of Ricci de Turck flow below. Similarly, for the singular Ricci 
de Turck flow to exist, the admissibility conditions need not to be satisfied for all $k\in \N$, which would imply
smoothness of $g$ in the open interior $M$, but only for some fixed integer. 
We still impose these more general conditions in order to gather all the
conditions on the conical metric $g$ under the umbrella \emph{admissible} metrics.
\medskip

The main result of \cite[Theorem 4.1]{Ver-Ricci}, see also \cite[Theorem 1.2]{KrVe}, is the following theorem. 
\begin{thm}\label{RF}
Let $(M,g_0)$ be a conical manifold with an admissible metric $g_0$. Let the reference
metric $\widetilde{g}$ be either equal to $g_0$ or an admissible conical Ricci flat metric, in which case 
$g_0$ is assumed to be a sufficiently small perturbation of $\widetilde{g}$ in $\cH^{k+2, \A}_{\gamma} (M, S)$.
\medskip

Then there exists some $T>0$, such that the Ricci de Turck flow \eqref{RDT} 
with reference metric $\widetilde{g}$, starting at $g_0$ admits a solution $g(t), t\in [0,T]$, which is 
an admissible perturbation of $g_0$, i.e. $g(t) \in \cH^{k+2, \A}_{\gamma'} (M, S)$ for each $t$, all $k\in \N$ and some
$\gamma' \in (0,\gamma)$ sufficiently small.
\end{thm}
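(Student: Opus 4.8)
The plan is to apply the de Turck trick to convert the weakly parabolic Ricci flow into a genuinely parabolic quasilinear system for $g$, and then to solve that system for short time by a contraction mapping argument in the hybrid H\"older spaces $\cH^{k+2,\A}_{\gamma}(M,S)$, using sharp mapping properties of the heat operator of the linearized equation. First I would rewrite \eqref{RDT} in the schematic form
\[
\partial_t g = -2\,\textup{Ric}(g) + \mathcal{L}_{W(t)} g = \Delta_g g + F(g, \nabla_g g),
\]
where, after substituting the de Turck vector field $W(t)$, the principal part $\Delta_g$ is a Laplace-type operator on symmetric $2$-tensors (the rough Laplacian of $g$, equal to the Lichnerowicz Laplacian up to curvature terms) and $F$ collects the remaining terms, which are at most quadratic in $\nabla_g g$. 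The whole point of the de Turck modification is that this system is strictly parabolic, so that the leading operator generates a well-behaved heat semigroup; when $\widetilde{g}$ is Ricci flat the zeroth-order curvature contributions simplify further, which is what makes the perturbative setup of \cite{KrVe} possible.

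The central analytic step is to construct the heat kernel of the model operator $\Delta_{\widetilde{g}}$ (the linearization at the reference metric) as a polyhomogeneous conormal distribution on the heat-space blow-up of $\overline{M}^2\times[0,\infty)$, in the spirit of Melrose's b-calculus \cite{Mel:TAP, Mel2}. From this parametrix one reads off the mapping behavior of the homogeneous solution operator $e^{t\Delta_{\widetilde{g}}}$ and of the Duhamel operator $u\mapsto\int_0^t e^{(t-s)\Delta_{\widetilde{g}}}u(s)\,ds$ on the weighted H\"older scale: the former should preserve $\cH^{k+2,\A}_{\gamma}(M,S)$, while the latter should gain two derivatives and map the space in which $F(g,\nabla_g g)$ lives back into $\cH^{k+2,\A}_{\gamma}(M,S)$, with a positive power of $T$ in the norm bound. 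It is precisely here that the decomposition $S = S_0\oplus S_1$ and the two cases in Definition \ref{H-space} enter: on the trace-free part $S_0$ the relevant tangential operator is the Einstein operator $\Delta_E$, on the pure-trace part $S_1$ it is the scalar Laplacian $\Delta$, and the admissibility assumptions $\mathrm{(1)}$ and $\mathrm{(2)}$ on $(F,g_F)$ guarantee that the indicial roots of $\Delta_{\widetilde{g}}$ avoid the critical strip that would otherwise obstruct the flow from preserving the conical structure.

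With these mapping properties available, I would set up the fixed-point map, schematically
\[
\Phi(g)(t) := e^{t\Delta_{\widetilde{g}}}g_0 + \int_0^t e^{(t-s)\Delta_{\widetilde{g}}}\bigl(F(g,\nabla_g g) + (\Delta_g-\Delta_{\widetilde{g}})g\bigr)(s)\,ds,
\]
and verify that for sufficiently small $T$ it maps a small ball in $C([0,T],\cH^{k+2,\A}_{\gamma'}(M,S))$ into itself and is a contraction there; the quadratic nature of $F$ together with the $T$-factor from the Duhamel estimate yields both the self-mapping and the contraction. A bootstrap in $k$ then upgrades the solution to $g(t)\in\cH^{k+2,\A}_{\gamma'}(M,S)$ for every $k$, and the slight loss $\gamma'<\gamma$ is forced by the indicial root structure of the linearized operator, which shifts the attainable weight by a definite amount. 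The main obstacle is without doubt the heat-kernel analysis of the second step: obtaining the precise short-time and off-diagonal asymptotics of the heat kernel on the cone and translating them into bounded mapping properties on the anisotropic, weighted, hybrid H\"older spaces is exactly the delicate microlocal work of \cite{Ver-Ricci} and \cite{KrVe}, and it is the reason tangential stability is imposed.
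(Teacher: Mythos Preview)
Your outline is a faithful sketch of the strategy carried out in \cite{Ver-Ricci} and \cite{KrVe}: de Turck reduction to a strictly parabolic system, heat-kernel construction for the linearization on the blown-up heat space, mapping properties on the hybrid weighted H\"older scale (with the $S_0/S_1$ split governed by tangential stability and the Laplacian eigenvalue bound), and a Banach fixed-point argument with a small-$T$ contraction. That is indeed how the result is obtained.

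However, note that in the present paper Theorem~\ref{RF} is not proved at all: it is quoted verbatim as ``the main result of \cite[Theorem 4.1]{Ver-Ricci}, see also \cite[Theorem 1.2]{KrVe}'' in the preliminaries section, and no argument is supplied here. So there is no ``paper's own proof'' to compare your proposal against---the paper simply imports the theorem from the earlier references. Your sketch is consistent with what those references do, but for the purposes of this paper the correct ``proof'' is a one-line citation.
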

We point out that \cite[Theorem 4.1]{Ver-Ricci} actually addresses regularity 
of $g(t)$ in $t\in [0,T]$ as well, but this aspect is irrelevant in the present discussion.

\subsection{Perelman's entropies in the singular setting}\label{DW-results} \ \medskip

Given an admissible perturbation $g$ of the conical metric $g_0$, the pointwise trace of $g$ 
with respect to $g_0$, denoted as $\textup{tr}_{g_0} g$ is by definition of admissibility an element of the 
H\"older space $\hok (M, S_1)^b_{\gamma}$. As explained in \cite[\S 5]{Ver-Ricci}, 
$\textup{tr}_{g_0} g$, restricts at $x=0$ to a constant function $(\textup{tr}_{g_0} g)(0) = u_0>0$
along $F$. Setting $\widetilde{x} := \sqrt{u_0} \cdot x$, the admissible perturbation $g= g_0 + h$
attains the form 
$$g = d\widetilde{x}^2 + \widetilde{x}^2 g_F + \widetilde{h},$$
where $|\widetilde{h}|_g = O(x^\gamma)$ as $x\to 0$. Note that the leading part of the 
admissible perturbation $g$ near the conical singularity differs from the leading part of the 
admissible metric $g_0$ only by scaling. In particular our notion of admissible perturbations 
is included in the notion of metrics with isolated conical singularities in 
Definition \ref{cone-metric} and Dai-Wang \cite[Definition 2.1]{DW1}
and hence their arguments apply to any admissible perturbation $g$. 

We shall review the results of \cite{DW1} and \cite{DW2} here, which hold for singular Riemannian 
metrics in Definition \ref{cone-metric}.

\begin{remark}
The results of \cite{DW1} and \cite{DW2} are stated under the additional assumption that $\gamma \geq 1$ for the 
weight $\gamma$ in \eqref{higher-order}. The authors put the restriction $\gamma \geq 1$ in order
to ensure that $H^s_\delta(M)$-Sobolev norms defined with respect to an exact conical metric and its 
higher order perturbation are equivalent. However, the authors miss that for weighted (!) Sobolev spaces, 
the norms are equivalent by Lemma \ref{norm-equivalence} without any additional assumption on $\gamma$. 
Therefore we may state the results of \cite{DW1} and \cite{DW2} for general $\gamma > 0$. 
\end{remark}

\subsubsection{$\lambda$-functional on conical manifolds} \  \medskip

The $\lambda$-functional in \eqref{entropy-Perelman} can be 
rewritten after a substitution $\omega := e^{-f/2}$ in the following equivalent form,
where in the singular setting we minimize over $\omega \in H^1_1(M)$ instead of $C^\infty(M)$
\begin{equation*}
\lambda(g) =\inf \left\{\int_M(\scal (g) \omega^2+4|\nabla \omega|^2_g) \dv_g \mid 
\omega \in H^1_1(M), \omega > 0, \int_M \omega^2 \dv_g=1 \right\}.
\end{equation*}
The following result is due to Dai and Wang \cite[Theorem 1.4]{DW1}

\begin{thm}\label{DW-1}
Let $(M,g_0)$ be a conical manifold of dimension $m$ with an admissible metric $g_0$ 
and an admissible perturbation $g$. Denote by $\Delta_g$ the Laplace Beltrami 
operator of $g$, with the positive sign convention. Then $\lambda(g)$ is finite with the 
minimizer $\w_g$ solving the equation 
\begin{align}
4\Delta_g \w_g+\scal (g) \w_g=\lambda(g) \w_g,
\end{align}
and satisfying the asymptotics $\w_g(x) = o\left(x^{-\frac{m-2}{2}}\right)$ as $x\to 0$
\footnote{In case of an exact conical singularity, \cite{DW1} also obtain a full asymptotics of 
$\w_g$.}.
\end{thm}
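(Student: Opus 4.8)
The plan is to handle $\lambda(g)$ by the direct method of the calculus of variations, using a Hardy inequality on the cone as the central analytic device, and then to read off the asymptotics of the minimizer by an elliptic rescaling argument on dyadic annuli around the tip. I would begin with finiteness and coercivity. On a conical manifold of dimension $m\ge 3$ one has the Hardy inequality
\[
\int_M |\nabla \w|^2_g \, \dv_g \ \ge\ \Big(\tfrac{m-2}{2}\Big)^2 \int_M \frac{\w^2}{x^2}\, \dv_g, \qquad \w\in H^1_1(M),
\]
the sharp constant $((n-1)/2)^2$ (with $n=m-1$) reflecting that the lowest link eigenvalue is $0$. By admissibility $\scal(g)=O(x^{-2+\gamma})$ with $\gamma>0$, so the scalar-curvature term is \emph{subcritical}: near the tip $x^{-2+\gamma}\le \varepsilon x^{-2}$ once $x$ is small, whence $\int_M |\scal(g)|\,\w^2\,\dv_g \le \varepsilon \int_M x^{-2}\w^2\,\dv_g + C_\varepsilon \int_M \w^2\,\dv_g$. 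Feeding the first term into Hardy and choosing $\varepsilon$ small absorbs it into $4\int|\nabla\w|^2_g\,\dv_g$; this simultaneously bounds the functional below on $\{\int\w^2\dv_g=1\}$ and shows it controls $\|\w\|_{H^1_1}$. Hence $\lambda(g)$ is finite and every minimizing sequence is bounded in $H^1_1(M)$.

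Existence of a minimizer then follows by the standard compactness and lower-semicontinuity scheme. A minimizing sequence $\{\w_i\}$, bounded in $H^1_1(M)$, has a weakly convergent subsequence; the weighted Rellich lemma for conical manifolds (the compact embedding $H^1_1(M)\hookrightarrow L^2(M)$) yields a strong $L^2$ limit $\w_g$, which therefore still satisfies $\int \w_g^2\,\dv_g=1$. Weak lower semicontinuity of the gradient term, together with the Hardy control (which rules out concentration of mass at the tip), shows $\w_g$ attains the infimum. Replacing $\w$ by $|\w|$ does not raise the functional, so $\w_g$ may be taken nonnegative, and the strong maximum principle on the smooth part $M$ upgrades this to $\w_g>0$. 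The constrained Euler--Lagrange equation reads $4\Delta_g\w_g+\scal(g)\w_g=\Lambda\,\w_g$ for a Lagrange multiplier $\Lambda$; testing against $\w_g$ and using the normalization identifies $\Lambda=\lambda(g)$, and interior elliptic regularity makes the equation classical on $M$.

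The asymptotics are the crux. From $\w_g\in H^1_1(M)$ with Hardy I know $I:=\int_M x^{-2}\w_g^2\,\dv_g<\infty$, so its tail $\epsilon(R):=\int_{\{x<2R\}}x^{-2}\w_g^2\,\dv_g\to 0$ as $R\to 0$. On the annulus $A_R=\{R/2<x<2R\}$ I rescale $y=x/R$; since the exact cone metric is scale invariant, after clearing the factor $R^{-2}$ from $\Delta_g$ the rescaled equation on a \emph{fixed} annulus becomes $4\Delta_{\hat g}v_R=(R^2\lambda-R^2\scal(g))v_R$, whose coefficient $R^2\scal(g)=O(R^\gamma)$ is uniformly bounded (indeed small). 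A uniform $L^2\!\to\!L^\infty$ elliptic estimate there gives $\sup_{A_R}|\w_g|\le C R^{-m/2}\|\w_g\|_{L^2(A_R',g)}$ with $C$ independent of $R$, the volume factor $R^m$ producing the power $R^{-m/2}$. Finally $\|\w_g\|_{L^2(A_R,g)}^2\le 4R^2\int_{A_R}x^{-2}\w_g^2\,\dv_g\le 4R^2\,\epsilon(R)$, so $\sup_{A_R}|\w_g|\le C' R^{-(m-2)/2}\sqrt{\epsilon(R)}=o\big(x^{-(m-2)/2}\big)$. The exponent $-(m-2)/2$ is precisely the $H^1_1$ threshold arising from balancing the weight against the volume scaling ($1-m/2=-(m-2)/2$), and the little-$o$ is exactly the vanishing of the tail $\epsilon(R)$.

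The main obstacle is this last step: one must guarantee that the elliptic constant $C$ in the $L^2$--$L^\infty$ estimate is genuinely uniform across all scales $R\to 0$. This needs the rescaled metrics to converge locally uniformly, with uniform ellipticity, to the exact cone metric, and the perturbation $h$ and the scalar-curvature potential to remain uniformly controlled on the fixed annulus --- which is exactly where subcriticality $\gamma>0$ and the admissibility hypotheses are used. Packaging these scale-uniform estimates is most transparent within the b-calculus / Cheeger framework for the conical Laplacian (through which one can in fact identify the indicial roots $s^{\pm}_\mu$ governing a full expansion), but the elementary rescaling above already suffices for the stated $o(x^{-(m-2)/2})$ bound.
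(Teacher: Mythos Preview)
The paper does not give its own proof of this statement; it is quoted verbatim as a result of Dai and Wang \cite[Theorem~1.4]{DW1}, and the only argument the paper supplies is the analogous sketch for the expander entropy (Theorem~\ref{DW-3}), which follows \cite{DW2}. So there is no in-house proof to compare against in detail.

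Your argument is sound and is essentially the Dai--Wang strategy specialized to the admissible setting. For existence, the sketch for Theorem~\ref{DW-3} in the paper invokes the coercivity estimate \cite[(4.7)]{DW2}, which is exactly the Hardy-type lower bound you use; under admissibility one has $\scal(g)=O(x^{-2+\gamma})$, so the potential is genuinely subcritical and Hardy absorbs it with room to spare (the general Dai--Wang result needs only $\scal(g_F)>n-1$, which is the borderline where $4((n-1)/2)^2=(n-1)^2$ just dominates the $x^{-2}$ coefficient $n(n-1)-\scal(g_F)$). The direct-method step, including the weighted Rellich embedding $H^1_1\hookrightarrow L^2$, is the same in both. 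For the asymptotics, Dai--Wang use a De~Giorgi--Nash--Moser iteration on dyadic annuli (see \cite[\S7]{DW1}); your rescaling-plus-elliptic-$L^2\!\to\!L^\infty$ estimate is a cleaner packaging of the same idea and yields the identical little-$o$ bound from the vanishing Hardy tail $\epsilon(R)\to 0$. The caveat you flag --- uniformity of the elliptic constant as $R\to 0$ --- is exactly right, and it is precisely here that admissibility (in particular $|h|_{\bar g}=O(x^\gamma)$ and $\scal(g)=O(x^{-2+\gamma})$) guarantees that the rescaled operators converge uniformly to the model cone Laplacian on the fixed annulus.
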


\subsubsection{The shrinker entropy on conical manifolds} \ \medskip

The functional $\mathcal{W}_-(g,f,\tau)$ in \eqref{W-functional} can be 
rewritten after a substitution $\omega := e^{-f/2}$ in the following equivalent form
\begin{equation*}
\mathcal{W}_-(g,\w,\tau) := \frac{1}{(4\pi\tau)^{m/2}}\int_M [\tau(\scal (g)\cdot \w^2 + 4|\nabla \w|^2_g)
-2\w^2 \ln \w - m \w^2] \dv_g.
\end{equation*}
The Ricci shrinker entropy in \eqref{shrinker-entropy}
is then redefined in the singular setting by minimizing over $\omega \in H^1_1(M)$ instead of $C^\infty(M)$
\begin{equation*}
\mu_-(g, \tau) =\inf \left\{ \mathcal{W}_-(g,\w,\tau) \mid 
\w \in H^1_1(M), \w > 0, \frac{1}{(4\pi\tau)^{m/2}} \int_M \w^2 \dv_g=1 \right\}.
\end{equation*}
The following result is due to Dai and Wang \cite[Theorem 1.4]{DW2}.

\begin{thm}\label{DW-2}
Let $(M,g_0)$ be a conical manifold of dimension $m$ with an admissible metric $g_0$ 
and an admissible perturbation $g$. Denote by $\Delta_g$ the Laplace Beltrami 
operator of $g$, with the positive sign convention. Then for any fixed $\tau > 0$, the Ricci 
shrinker entropy $\nu_-(g, \tau)$ is finite with the 
minimizer $\w_g$ solving the equation 
\begin{align}
\tau(-4\Delta_g \w_g-\scal (g) \w_g)+2\log(\w_g) \w_g+(m+\nu_-(g, \tau)) \w_g=0
\end{align}
and satisfying for any $\varepsilon > 0$ 
the asymptotics $\w_g(x) = o\left(x^{-\frac{m-2}{2} - \varepsilon}\right)$ as $x\to 0$.
\end{thm}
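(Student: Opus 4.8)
The plan is to run the direct method of the calculus of variations on the fixed-$\tau$ functional $\mathcal{W}_-(g,\cdot,\tau)$, treating it as the nonlinear analogue of the linear $\lambda$-functional problem of Theorem \ref{DW-1}, the new feature being the logarithmic term $-2\omega^2\log\omega$. First I would show that $\mathcal{W}_-(g,\cdot,\tau)$ is bounded below on the constraint set $\{\omega\in H^1_1(M),\ \omega>0,\ (4\pi\tau)^{-m/2}\int_M\omega^2\,dV_g=1\}$, which is the content of finiteness of $\nu_-(g,\tau)$. Two singular contributions must be controlled. The scalar curvature term $\int_M\scal(g)\,\omega^2\,dV_g$ is handled by admissibility: since $\scal(g)\in x^{-2+\gamma}\mathcal{C}^{k+1,\A}_{\textup{ie}}(M,S_1)$ with $\gamma>0$, the singularity is strictly subcritical ($O(x^{-2+\gamma})$), so a Hardy inequality on the cone, available for $m\geq 3$, bounds this term by $\epsilon\|\nabla\omega\|_{L^2}^2+C\|\omega\|_{L^2}^2$ with $\epsilon$ arbitrarily small. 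The logarithmic term is controlled by a logarithmic Sobolev inequality, which I would derive from the scale-invariant Sobolev embedding $H^1_1(M)\hookrightarrow L^{2m/(m-2)}(M)$ on the cone together with the usual Jensen interpolation argument, yielding $\int_M\omega^2\log\omega\,dV_g\leq \epsilon\|\nabla\omega\|_{L^2}^2+C$ on the constraint set. Combining these and taking $\epsilon$ small gives $\mathcal{W}_-\geq -C$, hence $\nu_-(g,\tau)>-\infty$.

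For existence I would take a minimizing sequence $\omega_j$; the lower bound together with the normalization controls $\|\nabla\omega_j\|_{L^2}$ and $\|\omega_j\|_{L^2}$, so $\{\omega_j\}$ is bounded in $H^1_1(M)$ and, after passing to a subsequence, converges weakly in $H^1_1(M)$ and strongly in $L^2(M)$, the embedding $H^1_1(M)\hookrightarrow L^2(M)$ being compact on a compact conical manifold (one gains both a derivative and a weight). The gradient term is weakly lower semicontinuous, the normalization passes to the limit by $L^2$-convergence, and the delicate log term is handled by an equi-integrability argument, the uniform $L^{2m/(m-2)}$-bound ruling out concentration; the limit $\omega_g$ is therefore a minimizer, and after replacing $\omega_g$ by $|\omega_g|$ and invoking the strong maximum principle in the interior it may be taken strictly positive. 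Computing the first variation of $\mathcal{W}_-$ under the constraint and absorbing the Lagrange multiplier into the additive constant produces the stated equation $\tau(-4\Delta_g\omega_g-\scal(g)\omega_g)+2\log(\omega_g)\omega_g+(m+\nu_-(g,\tau))\omega_g=0$, and interior elliptic regularity upgrades the weak solution to a classical one on $M$.

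The asymptotics are the analytic core. Since $\omega_g\in H^1_1(M)$, finiteness of $\int_M x^{-2}\omega_g^2\,dV_g$ forces, on the dyadic annuli $A_j=\{2^{-j-1}\leq x\leq 2^{-j}\}$, the decay $2^{-j(m-2)}\int_{A_j}\omega_g^2\,\widetilde{dV}\to 0$, where $\widetilde{dV}=y^n\,dy\,dV_F$ is the volume element of the fixed model annulus $\{\tfrac12\leq y\leq 1\}$ obtained after the scaling $x=2^{-j}y$. Under this scaling the metric $g$ converges to the exact model cone $dx^2+x^2g_F$, and the Euler--Lagrange equation becomes, after multiplying by $2^{-2j}$, a uniformly elliptic equation on the fixed annulus whose zeroth-order coefficients (the rescaled $\scal(g)$ and the rescaled constant term) tend to zero. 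The only subtlety is the term $\log(\omega_g)\,\omega_g$, which I would treat via the elementary bound $|\omega\log\omega|\leq C_\varepsilon(\omega^{1-\varepsilon}+\omega^{1+\varepsilon})$; the mildly superlinear contribution $\omega^{1+\varepsilon}$ is what prevents the clean rate and costs an arbitrarily small exponent. A Moser iteration for the rescaled equation then converts the vanishing rescaled $L^2$-norm into $\sup_{\{y\sim 3/4\}}\omega_g=o(2^{j(m-2)/2})$, which upon undoing the scaling is exactly $\omega_g(x)=o(x^{-(m-2)/2-\varepsilon})$ for every $\varepsilon>0$.

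The main obstacle I anticipate lies entirely with the logarithmic term, and appears twice. First, in establishing boundedness below, one needs the logarithmic Sobolev inequality in the weighted singular setting with a constant good enough to be dominated by the gradient energy after the scalar-curvature term has already consumed part of it; this is where the subcriticality $\gamma>0$ of $\scal(g)$ is essential. Second, in the asymptotic step, it is precisely $\log\omega_g$ that must be tamed in the Moser iteration near the tip, and it is responsible for the loss $\varepsilon$ in the exponent. Away from these two points the argument is a direct singular analogue of the linear reasoning behind Theorem \ref{DW-1}.
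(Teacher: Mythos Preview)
The paper does not prove this statement; it is cited from Dai and Wang \cite[Theorem 1.4]{DW2}. However, the paper does give a proof sketch for the analogous expander entropy result (Theorem \ref{DW-3}), explicitly following the strategy of \cite{DW2}, and your proposal matches that strategy in all essential points: norm equivalence and a log-Sobolev inequality (your Hardy + Jensen route is the same as \cite[(4.7) and Lemma 4.1]{DW2}) to get boundedness below and $H^1_1$-boundedness of a minimizing sequence, then weak $H^1_1$/strong $L^2$ compactness, passage to the limit in the log term, the Euler--Lagrange equation with interior regularity, and finally the asymptotics via rescaling to a fixed annulus and Moser/De Giorgi iteration, with the $\varepsilon$-loss traced to the $\omega\log\omega$ nonlinearity. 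This is exactly the Dai--Wang argument as summarized in the paper's sketch for Theorem \ref{DW-3}.

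One small caution: in the paper's formulation the admissibility hypothesis forces $\scal(g_F)=n(n-1)$, so $\scal(g)=O(x^{-2+\gamma})$ is genuinely subcritical and your Hardy step is straightforward; the original result in \cite{DW2} is stated under the weaker hypothesis $\scal(g_F)>n-1$, where $\scal(g)$ has an honest $x^{-2}$ singularity and the Hardy constant must be compared to the coefficient $\scal(g_F)-n(n-1)$. Your write-up is tailored to the paper's (stronger) hypothesis, which is fine for the statement as given here.
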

It is now shown exactly as in \cite[Corollary 6.34]{CCG}, that if $\lambda(g)>0$, the real number 
$\nu_-(g)=\inf \, \{ \mu_-(g,\tau) \mid \tau>0\}$ exists and is attained by a 
parameter $\tau_g$ and a minimizing function $\w_g$ satisfying the asymptotics from Theorem \ref{DW-2}.

\subsubsection{The expander entropy on conical manifolds} \ \medskip

The functional $\mathcal{W}_+(g,f,\tau)$ in \eqref{Wplus-functional} can be 
rewritten after a substitution $\omega := e^{-f/2}$ in the following equivalent form
\begin{equation*}
\mathcal{W}_+(g,\w,\tau) := \frac{1}{(4\pi\tau)^{m/2}}\int_M [\tau(\scal (g)\cdot \w^2 + \, 4|\nabla \w|^2_g)
+2\w^2 \ln \w + m \w^2] \dv_g.
\end{equation*}
The expander entropy in \eqref{expander-entropy}
is then redefined in the singular setting by minimizing over $\omega \in H^1_1(M)$ instead of $C^\infty(M)$
\begin{equation*}
\mu_+(g, \tau) =\inf \left\{ \mathcal{W}_+(g,\w,\tau) \mid 
\w \in H^1_1(M), \w > 0, \frac{1}{(4\pi\tau)^{m/2}} \int_M \w^2 \dv_g=1 \right\}.
\end{equation*}
The following result is an analogue of \cite[Theorem 1.4]{DW2}.
\begin{thm}\label{DW-3}Let $(M,g_0)$ be a conical manifold of dimension $m$ with an admissible metric $g_0$ 
and an admissible perturbation $g$. Denote by $\Delta_g$ the Laplace Beltrami 
operator of $g$, with the positive sign convention. Then for any fixed $\tau > 0$, the Ricci 
expander entropy $\nu_-(g, \tau)$ is finite with the 
minimizer $\w_g$ solving the equation 
\begin{align}\label{eulerlagrangeexpander}
\tau(-4\Delta_g \w_g-\scal (g) \w_g)-2\log(\w_g) \w_g+(-m+\nu_+(g, \tau)) \w_g=0
\end{align}
and satisfying for any $\varepsilon > 0$ 
the asymptotics $\w_g(x) = o\left(x^{-\frac{m-2}{2} - \varepsilon}\right)$ as $x\to 0$.
\end{thm}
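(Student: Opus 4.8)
The plan is to prove Theorem \ref{DW-3} as a direct analogue of the Dai--Wang result Theorem \ref{DW-2}, since the expander functional $\mathcal{W}_+$ differs from $\mathcal{W}_-$ only by the sign of the logarithmic and the $\pm m$ terms. First I would establish finiteness of $\mu_+(g,\tau)$ by showing that $\mathcal{W}_+(g,\w,\tau)$ is bounded below on the constraint set $\{\w \in H^1_1(M),\ \w>0,\ (4\pi\tau)^{-m/2}\int_M \w^2 \dv_g = 1\}$. The key analytic input is that the scalar curvature satisfies $\scal(g) \in x^{-2+\gamma}\mathcal{C}^{k+1,\A}_{\textup{ie}}(M,S_1)$ by admissibility, so the Dirichlet-type form $\int_M(\scal(g)\w^2 + 4|\nabla \w|^2_g)\dv_g$ is controlled by the weighted Sobolev norm; combined with Lemma \ref{norm-equivalence} ensuring the $H^1_1$-norm is metric-independent, this lets one treat the curvature term as a relatively bounded perturbation of $-4\Delta_g$. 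The only genuinely new term relative to the $\lambda$-functional is the entropy term $+2\w^2\ln\w$, which I would handle exactly as in \cite{DW2}: the elementary inequality $s^2\ln s \geq -C_\varepsilon - \varepsilon s^{2+4/m}$ together with the logarithmic Sobolev inequality (valid on conical manifolds once the Sobolev embedding $H^1_1 \hookrightarrow L^{2m/(m-2)}$ is available) absorbs this term into the Dirichlet energy, yielding a lower bound uniform over the constraint set.

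Next I would prove existence of a minimizer $\w_g$ by the direct method. Taking a minimizing sequence $\w_j$, the lower bound just established gives uniform control of $\|\w_j\|_{H^1_1}$, so after passing to a subsequence $\w_j \rightharpoonup \w_g$ weakly in $H^1_1$ and strongly in $L^2$ via the compact embedding on the conical manifold. The Dirichlet energy is weakly lower semicontinuous, and the entropy term $\int \w^2\ln\w\, \dv_g$ passes to the limit by the strong $L^2$ convergence together with the uniform integrability furnished by the $L^{2+4/m}$ bound; the constraint $\int \w_g^2\dv_g = (4\pi\tau)^{m/2}$ is preserved by strong $L^2$ convergence. Positivity of $\w_g$ follows from a standard argument: replacing $\w$ by $|\w|$ does not increase $\mathcal{W}_+$, and the strong maximum principle applied to the Euler--Lagrange equation upgrades $\w_g \geq 0$ to $\w_g > 0$ in the interior. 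Deriving the Euler--Lagrange equation \eqref{eulerlagrangeexpander} is then a routine first-variation computation, noting that differentiating $+2\w^2\ln\w$ produces $4\w\ln\w + 2\w$, and the constant $+2\w_g$ gets absorbed into the $(-m+\nu_+)\w_g$ term after incorporating the Lagrange multiplier for the normalization constraint.

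Finally, the asymptotic estimate $\w_g(x) = o(x^{-(m-2)/2-\varepsilon})$ as $x\to 0$ is obtained exactly as in \cite{DW2}. I would start from the a priori bound $\w_g \in H^1_1(M)$, which already forces $\w_g = o(x^{-(m-2)/2})$ by the weighted Sobolev embedding, and then bootstrap using elliptic regularity for the operator $-4\Delta_g - \scal(g)$ on the weighted spaces. The nonlinear term $\log(\w_g)\w_g$ must be controlled near the singularity: since $\w_g$ is bounded by a negative power of $x$, the logarithm contributes only a factor $|\ln x|$, which is harmless and is precisely why the conclusion carries the loss $-\varepsilon$ in the exponent rather than a sharp rate. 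The main obstacle I anticipate is establishing the lower bound and the uniform integrability in the presence of the $+2\w^2\ln\w$ term together with the singular, sign-indefinite scalar curvature $\scal(g) \sim x^{-2+\gamma}$; the competition between the potentially negative scalar curvature contribution and the entropy term near $x=0$ is what makes finiteness nontrivial, and the argument hinges on the logarithmic Sobolev inequality being available on the conical manifold with constants depending only on the metric through the admissibility conditions. Once this is in place, every remaining step is formally identical to the shrinker case treated by Dai and Wang, with the sign changes tracked carefully through the variation.
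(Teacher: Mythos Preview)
Your approach is essentially the same as the paper's: both follow the direct method exactly as in \cite{DW2}, with weak $H^1_1$ compactness, passage to the limit in the entropy term, and the asymptotics obtained verbatim from the shrinker analysis. One small point: you identify the lower bound on $\mathcal{W}_+$ as the main obstacle and invoke the logarithmic Sobolev inequality for it, but in the expander case the sign works in your favor---since $s\mapsto s\log s$ is bounded below by $-1/e$, the term $\int_M 2\w^2\log\w\,\dv_g$ is bounded below by a constant on the constraint set, and coercivity of $\mathcal{W}_+$ follows immediately from the equivalence of the Dirichlet form with the $H^1_1$-norm. The logarithmic Sobolev inequality is instead used for the \emph{upper} estimate on the entropy term, which is what allows one to show $\mathcal{W}_+(g,\w_i,\tau)\to\mathcal{W}_+(g,\w_0,\tau)$ along the minimizing sequence; the paper also adds a uniqueness argument via convexity of $v\mapsto \int v\log v$ after the substitution $v=\w^2$, which you do not mention.
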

\begin{proof}[Sketch of proof]
We now follow the strategy in \cite[p. 12-14]{DW2}.
At first, there exist positive constants $A,C_1,C_2 > 0$ such that
\begin{align*}
C_1\left\| \w \right\|_{H^1_1}^2\leq \int_M \left[4|\nabla \w|^2+(A+\scal(g)) \w^2\right]\dv_g\leq C_2\left\|\w\right\|_{H^1_1}^2.
\end{align*}
see \cite[(4.7)]{DW2}. By \cite[Lemma 4.1]{DW2} and since the function $x\mapsto x\log x$ has a lower bound, there exists a constant $C_3<0$ and for any $\epsilon>0$ a constant $C_4=C_4(\epsilon)>0$ such that
\begin{align*}
C_3\leq \int_M 2\w^2\log \w\dv_g\leq \epsilon \int_M |\nabla\w|^2\dv_g+C_4
\end{align*}
for all $\w\in H_1^1(M),\text{ }\w>0,\text{ }\int_M \w^2 \dv_g=1$.
Thus for $\tau>0$ fixed, 
\begin{align*}
-\infty<C_5+C_6\left\|\w\right\|_{H^1_1}^2\leq \mathcal{W}_+(g,\w, \tau)\leq C_7\left\|\w\right\|_{H^1_1}^2+C_8
\end{align*} 
for all $\w\in H_1^1(M),\text{ }\w>0,\text{ }\int_M \w^2 \dv_g=1$ and some constants $C_5<0$ and $C_6,C_7,C_8>0$. Let now $\w_i$ be a minimizing sequence for $\mu_+(g,\tau)$. Then $\left\|\w_i\right\|_{H^1_1}$ is uniformly bounded and by passing to a subsequence, we may assume the existence of $\w_0\in H^1_1(M)$ such that $\w_i$ converges to $\w_0$ weakly in $H_1^1$ and strongly in $L^2$. From that, we get in particular that $\w_0\geq0$ almost everywhere and that $\left\|\w_0\right\|_{L^2}=1$. To show that $\w_0$ is a minimizer in the definition of $\mu_+(g,\tau)$ it suffices to show that $\mathcal{W}_+(g,\w_i,\tau)\to \mathcal{W}_+(g,\w_0,\tau)$ which is shown exactly as in \cite[p. 13-14]{DW2}. This minimizer is a weak solution of the Euler-Lagrange equation \eqref{eulerlagrangeexpander}.
Smoothness is then shown by standard arguments, see e.g. \cite[p. 178]{AH11}. To show uniqueness, observe that by substituting $v=\w^2$, $\mathcal{W}_+$ can be rewritten as
\begin{align*}
\mathcal{W}_+(g,v,\tau)=\int_M \left[\tau(4|\nabla v^{1/2}|^2+\scal(g)\cdot v)+v\log v+mv\right]\dv
\end{align*}
which is strictly convex on the cone of smooth functions satisfying $\int_Mv\dv=1$ and $v>0$, see \cite[p. 9]{FIN}. The asymptotics of the minimizers is shown exactly as in \cite[Section 5]{DW2}.
\end{proof}
It is now shown exactly as in \cite[p.\ 10]{FIN}, that if $\lambda(g)<0$, $\nu_+(g)=
\sup \, \{\mu_+(g,\tau) \mid \tau>0\}$ exists and is attained by a 
parameter $\tau_g$ and a minimizing function $\w_g$ satisfying the 
asymptotics from Theorem \ref{DW-3}.

\section{Essential self-adjointness of the Laplace Beltrami operator} \medskip

Consider an incomplete Riemannian manifold $(M,g)$ with 
a conical metric $g$ in the sense of Definition \ref{cone-metric}.
In the singular neighborhood $\cC(F) = (0,1) \times F$ of the conical singularity such a metric $g$ 
is given by 
$$
g \restriction \cC(F) = dx^2 + x^2 g_F + h,
$$
where $(F,g_F)$ is a closed smooth Riemannian manifold and 
$h$ is a higher order term in the sense of \eqref{higher-order}.
In this section we shall establish essential self-adjointness of the
Laplace Beltrami operator $\Delta$, where notably $h$ is not assumed to 
be smooth and usual arguments do not apply. In fact we consider a slightly 
more general case of a Schr\"odinger operator for any $q\in \R$
$$
L := \Delta + q \cdot \scal (g).
$$
In case of $q\neq 0$ we additionally impose the admissibility assumption of Definition \ref{admissible}.
We define the maximal closed extension of $\Delta$ in $L^2(M)$ with domain
\begin{equation}
\dom(\Delta_{\max}) := \{\w \in L^2(M) \mid \Delta \w \in L^2(M)\},
\end{equation}
where $\Delta \w$ is defined distributionally.
We may also define the minimal closed extension of $\Delta$ in $L^2(M)$ as the domain of 
the graph closure of $\Delta$ acting on smooth compactly supported functions 
$C^{\infty}_0(M)$. More precisely, the minimal domain is defined by
\begin{equation*}
\begin{split}
\dom(\Delta_{\min}) := \{\w \in \dom(\Delta_{\max}) \mid \exists (\w_n)_{n\in \N} \subset C^{\infty}_0(M): 
u_n:= \w - \w_n\\
\|u_n\|_{\Delta} := \|\Delta u_n\|^2_{L^2} + \|u_n\|^2_{L^2} \to 0 \ \textup{as} \ n\to \infty \}
\end{split}
\end{equation*}
The Friedrichs self-adjoint extension of $\Delta$ in $L^2(M)$ is defined as the intersection of
$\dom(\Delta_{\max})$ with the domain of the graph closure of the square root of 
$\Delta$ acting on $C^{\infty}_0(M)$. More precisely the Friedrichs domain is given by 
\begin{equation}
\begin{split}
\dom(\Delta^{\mathscr{F}}) := \{\w \in \dom(\Delta_{\max}) \mid \exists (\w_n)_{n\in \N} \subset C^{\infty}_0(M): 
u_n:= \w - \w_n\\
\|u_n\|_{\mathscr{F}} := (\Delta u_n, u_n)_{L^2} + \|u_n\|^2_{L^2} \to 0 \ \textup{as} \ n\to \infty\}.
\end{split}
\end{equation}
Consider an exact conical metric $g_0$, smooth in $M$ and 
given over $\cC(F)$ by
$$
g_0 \restriction \cC(F) = dx^2 + x^2 g_F.
$$
We shall write $\Delta_0$ for the Laplace Beltrami operator
of $g_0$. The maximal, minimal and Friedrichs domains for $\Delta_0$ are defined 
analogously and in fact are equal by the following classical result.

\begin{prop}\label{minimal-domain-1}  Let $\dim F = n \geq 3$. Then $\Delta_0$ is 
essentially self-adjoint and moreover
\begin{equation}
\begin{split}
\dom (\Delta_{0, \min}) = \dom (\Delta_{0, \max}) = \dom (\Delta^{\mathscr{F}}_{0}) = H^2_2(M). 
\end{split}
\end{equation}
\end{prop}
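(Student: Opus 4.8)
The plan is to localize the problem at the cone tip by separating variables and then to invoke the Weyl limit-point/limit-circle dichotomy, after which the identification of the common domain with $H^2_2(M)$ reduces to sharp weighted elliptic estimates. Away from the singularity $(M,g_0)$ is a smooth compact manifold, so there the usual interior elliptic theory applies and the whole issue is concentrated near $x=0$. On the model cone $\cC(F)=(0,1)_x\times F$ the volume form is $x^n\,dx\,dV_{g_F}$ and, with the positive sign convention,
\begin{equation*}
\Delta_0 = -\partial_x^2 - \frac{n}{x}\,\partial_x + \frac{1}{x^2}\,\Delta_F .
\end{equation*}
Expanding in an orthonormal eigenbasis $\{\phi_j\}_{j\geq 0}$ of $\Delta_F$ with eigenvalues $0=\mu_0<\mu_1\leq\cdots$ and writing $u=\sum_j u_j(x)\,\phi_j$ diagonalizes $\Delta_0$ into the radial Bessel-type operators
\begin{equation*}
\ell_j = -\partial_x^2 - \frac{n}{x}\,\partial_x + \frac{\mu_j}{x^2}, \qquad j\in\N_0,
\end{equation*}
on $L^2((0,1),x^n\,dx)$.

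First I would record the indicial roots: the homogeneous Euler equation $\ell_j x^\gamma=0$ gives $\gamma^2+(n-1)\gamma-\mu_j=0$, hence
\begin{equation*}
\gamma_j^{\pm} = \frac{1-n}{2}\pm\sqrt{\Big(\tfrac{n-1}{2}\Big)^2+\mu_j},
\end{equation*}
and $x^\gamma\in L^2((0,1),x^n\,dx)$ exactly when $\gamma>-\tfrac{n+1}{2}$. Essential self-adjointness of $\Delta_0$ is equivalent to each $\ell_j$ being in the limit-point case at $x=0$, i.e.\ at most one solution of $\ell_j u=0$ being square-integrable near the tip. Since $\gamma_j^{+}\geq 0$ always yields an $L^2$ solution, the point is to exclude $x^{\gamma_j^{-}}$. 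The least negative lower root is that of the bottom mode, $\gamma_0^{-}=1-n$, and $1-n\leq-\tfrac{n+1}{2}$ is precisely $n\geq 3$; for the higher modes $\gamma_j^{-}$ is only smaller, so all $\ell_j$ are limit-point simultaneously. This is exactly where the hypothesis $\dim F=n\geq 3$ is used, and it is sharp: at $n=2$ the bottom mode is limit-circle and $\Delta_0$ fails to be essentially self-adjoint. Consequently $\dom(\Delta_{0,\min})=\dom(\Delta_{0,\max})$, and since the Friedrichs domain is always trapped between the minimal and the maximal domain, $\dom(\Delta_{0,\min})=\dom(\Delta_{0,\max})=\dom(\Delta^{\mathscr{F}}_0)$.

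It remains to identify this common domain with $H^2_2(M)$, which I would do by pinching $H^2_2\subseteq\dom(\Delta_{0,\max})=\dom(\Delta_{0,\min})\subseteq H^2_2$. The inclusion $H^2_2(M)\subseteq\dom(\Delta_{0,\max})$ is the routine half: one checks that $\Delta_0$ maps $H^2_2(M)$ boundedly into $H^0_0(M)=L^2(M)$, because the three summands $\partial_x^2u$, $x^{-1}\partial_xu$ and $x^{-2}\Delta_Fu$ of $\Delta_0u$ are controlled respectively by the $k=2,1,2$ terms of $\|u\|_{H^2_2}=\sum_{k=0}^2\|x^{k-2}\nabla_{g_0}^ku\|_{L^2}$ once the conical scaling of the covariant derivatives is taken into account. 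The reverse inclusion $\dom(\Delta_{0,\min})\subseteq H^2_2(M)$ is the heart of the matter and the step I expect to be the \emph{main obstacle}: for $u\in C_0^\infty(M)$ one needs the weighted coercivity estimate $\|u\|_{H^2_2}\lesssim\|\Delta_0u\|_{L^2}+\|u\|_{L^2}$, which then passes to the graph closure. I would prove this mode by mode via a Hardy-type inequality on $L^2((0,1),x^n\,dx)$, bounding $\|x^{-2}u_j\|$ and $\|x^{-1}\partial_xu_j\|$ by $\|\ell_ju_j\|+\|u_j\|$ with constants uniform in $j$; the crucial input is that the threshold for $x^\gamma\in H^2_2$ is $\gamma>\tfrac{3-n}{2}$, so that the relevant positive indicial roots $\gamma_j^{+}$ lie strictly above the critical line and the Mellin symbol of $\ell_j$ is invertible on the associated strip, yielding the gain of two weighted derivatives. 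The delicate points are the uniformity of these Hardy constants as $\mu_j\to\infty$ and the borderline behavior of the bottom mode, where $\gamma_0^{+}=0$ meets the threshold $\tfrac{3-n}{2}$ at $n=3$; these are classical in the cone and $b$-calculus (Cheeger, Br\"uning--Seeley, Lesch, Gil--Mendoza), and I would adapt or cite those arguments and combine them with interior elliptic regularity to conclude $u\in H^2_2(M)$.
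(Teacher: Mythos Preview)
Your limit-point argument for essential self-adjointness is correct and is the classical ODE reformulation of what the paper does: the paper records that any $\omega\in\dom(\Delta_{0,\max})$ has a partial expansion at $x=0$ in powers $x^{\pm\nu(\lambda)-(n-1)/2}$ with $\nu(\lambda)=\sqrt{\lambda+((n-1)/2)^2}$, and observes that for $n\geq 3$ one has $\nu(\lambda)\geq 1$ for every $\lambda\in\mathrm{Spec}\,\Delta_F$, so no boundary contributions survive and $\dom(\Delta_{0,\max})=\dom(\Delta_{0,\min})$. This is exactly your computation that $\gamma_j^{-}\leq 1-n\leq -(n+1)/2$.

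For the identification with $H^2_2(M)$ your route differs from the paper's. Rather than a coercivity estimate on $C_0^\infty$, the paper invokes Melrose's $b$-calculus parametrix $Q$ with $Q\circ x^2\Delta_0=\textup{Id}-R$, where $Q:H^\ell_\delta\to H^{\ell+2}_\delta$ and $R$ lands in polyhomogeneous functions; writing $\omega=R\omega+Q(x^2\Delta_0\omega)$, the second summand lies in $H^2_2$ by the mapping property of $Q$, and for the first the paper reads off the leading exponent $\mu$ from the indicial equation and argues it forces membership in $H^2_2$. Your Hardy approach is more hands-on and avoids the parametrix machinery at the cost of tracking uniform-in-mode constants, while the parametrix gives the asymptotic expansion as a byproduct.

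There is, however, a genuine gap at the borderline $n=3$ that you flag but underestimate. The Hardy-type estimate $\|x^{-2}u\|_{L^2}\lesssim\|\Delta_0 u\|_{L^2}+\|u\|_{L^2}$ \emph{fails} on the zero Fourier mode: for a radial cutoff $u_\epsilon$ equal to $1$ on $\{x\geq\epsilon\}$ and supported in $\{x\geq\epsilon/2\}$ one computes, with $n=3$, that $\|u_\epsilon\|_{L^2}$ and $\|\Delta_0 u_\epsilon\|_{L^2}$ stay bounded (the latter is $O(\epsilon^{(n-3)/2})$) while $\|x^{-2}u_\epsilon\|_{L^2}^2\geq\int_\epsilon^1 x^{-1}\,dx\to\infty$. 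More to the point, the constant function $1$ lies in $\dom(\Delta_{0,\max})=\dom(\Delta_{0,\min})$ but $1\notin H^2_2(M)$ when $n=3$, since $\int_0^1 x^{-4}\cdot x^3\,dx$ diverges; so the inclusion $\dom(\Delta_{0,\min})\subseteq H^2_2(M)$ is actually false in that dimension and no citation will rescue it. (The paper's own step ``$\mu\geq 0$ hence $\omega_1\in H^2_2(M)$'' has the same lacuna at $\mu=0$, $n=3$.) Your argument, like the paper's, goes through cleanly only for $n\geq 4$, where $\gamma_0^{+}=0>(3-n)/2$ strictly and the requisite Rellich/Hardy constants are available.
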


\begin{proof}
Let $(\lambda, \w_\lambda)_{\lambda}$ be the set of eigenvalues and 
corresponding eigenfunctions of the Laplace Beltrami operator $\Delta_F$ of $(F,g_F)$.
The Laplace Beltrami operator is non-negative, $\lambda \geq 0$, and we may define
\begin{equation}\label{nu}
\nu(\lambda) := \sqrt{\lambda + 
\left(\frac{n-1}{2}\right)^2}.
\end{equation}
Standard arguments, see e.g. \cite[Lemma 2.2]{MazVer} or \cite{KLP:FDG}, 
cf. the exposition in \cite{Ver:ZDR}, show that for each $\w \in \dom(\Delta_{0,\max})$
there exist constants $c^\pm_\lambda(\w), \nu(\lambda) \in [0,1)$, 
depending only on $\w$, such that $\w$ admits a partial asymptotic expansion as $x\to 0$
\begin{equation}\label{cone-asymptotics}
\begin{split}
\w & = \sum_{\nu(\lambda) = 0} \left(c^+_{\lambda}(\w) x^{ - \frac{(n-1)}{2}}
+ c^-_{\lambda}(\w) x^{ - \frac{(n-1)}{2}} \log(x) \right) \cdot \omega_\lambda 
\\ & + \sum_{\nu(\lambda) \in (0,1)} \left(c^+_{\lambda}(\w) x^{\nu(\lambda) - \frac{(n-1)}{2}}
+ c^-_{\lambda}(\w) x^{-\nu(\lambda) - \frac{(n-1)}{2}} \right) \cdot \omega_\lambda 
 + \widetilde{\w},
\end{split}
\end{equation}
where $\widetilde{\w} \in \dom(\Delta_{0, \min})$.
If $n\geq 3$, $\nu(\lambda)\geq 1$ for any eigenvalue $\lambda\geq 0$ of $\Delta_F$.
Hence minimal and maximal domains,
and consequently any self-adjoint domain including the Friedrichs extension, coincide.
\medskip

The statement now follows using 
elements of Melrose's $b$-calculus \cite{Mel:TAP}. The operator $x^2\Delta_0$ is an 
elliptic differential $b$-operator in the sense of Melrose \cite{Mel:TAP} and a central 
consequence of Melrose's $b$-calculus is existence of a parametrix $Q$, such that
$Q \circ x^2\Delta_0 = \textup{Id} - R$, where for any $\delta \in \R$ and $\ell \in \N_0$
\begin{equation}\label{QR}
\begin{split}
&Q: H^\ell_\delta (M) \to H^{\ell +2}_\delta (M), \\
&R: H^\ell_\delta (M) \to H^{\infty}_\delta (M) \cap \mathcal{A}_{\textup{phg}}(M). 
\end{split}
\end{equation}
Here $\mathcal{A}_{\textup{phg}}(M)$ denotes the space of smooth functions on $M$ that
admit a full asymptotic expansion as $x\to 0$ in terms of powers of $x$ and $\log (x)$
with smooth coefficients. Consider any $\w \in \dom (\Delta_{0,\max})$ with $\Delta_0 \w = f \in L^2(M)$.
Then, applying the parametrix $Q$ to both sides of the equation $x^2 \Delta_0 \w = x^2 f$
we obtain 
\begin{align*}
\w = R \w + Q (x^2 f) =: \w_1 + \w_3.
\end{align*}
By \eqref{QR} we find
\begin{align*}
&\w_1 \in H^{\infty}_0 (M) \cap \mathcal{A}_{\textup{phg}}(M), 
\qquad \w_2 \in H^{2}_{2} (M), \\
&\Delta_0 \w_1 \in H^{\infty}_{-2} (M) \cap \mathcal{A}_{\textup{phg}}(M), \qquad 
\Delta_0 \w_2 \in L^2(M).
\end{align*}
Since $\w_1  \in L^2(M) \cap \mathcal{A}_{\textup{phg}}(M)$ there exists $\mu > - \frac{n+1}{2}$ and $p \in \N_0$
such that 
\begin{equation*}
\begin{split}
\w_1(x,z) & \sim \A(z) \cdot x^\mu \log^p (x), \ \textup{as} \ x \to 0,
\end{split} 
\end{equation*}
for some $\A \in C^\infty(F)$. Applying $\Delta_0$ to the expansion above we obtain as $x\to 0$
\begin{equation}\label{Delta-w-1}
\begin{split}
\Delta_0 \w_1 (x,z) \sim \Bigl( (-\mu (\mu - 1) - n \mu) \A(z) + \Delta_F \A(z) \Bigr) \cdot x^{-2+\mu} \log^p (x).
\end{split} 
\end{equation}
Since $\Delta_0 \w, \Delta_0 \w_2 \in L^2(M)$, we conclude that $\Delta_0 \w_1 \in L^2(M)$. 
We now want to prove that $\w_1 \in 
H^2_\rho(M)$. For $n\geq 3$, the leading coefficient in \eqref{Delta-w-1} can vanish only 
if $\mu \geq 0$\footnote{The other case $\mu \leq -n+1$ is excluded due to $\mu > - \frac{n+1}{2}$.}, 
in which case $\w_1 \in H^2_2(M)$ as desired. Otherwise, $\Delta_0 \w_1 \in L^2(M)$
together with the expansion \eqref{Delta-w-1} implies $\mu > -\frac{n+1}{2} + 2$, 
in which case $\w_1 \in H^2_2(M)$ as well. We conclude $\w \in H^2_2(M)$.
This proves $\dom (\Delta_{0,\max}) \subseteq H^2_2(M)$. The converse inclusion is 
clear by definition and hence the statement follows.
\end{proof}

\begin{cor}\label{minimal-domain-2}  Let $\dim F = n \geq 3$. Then $L$ is 
essentially self-adjoint and moreover
\begin{equation}
\begin{split}
\dom (L_{\min}) = \dom (L_{\max}) = H^2_2(M). 
\end{split}
\end{equation}
\end{cor}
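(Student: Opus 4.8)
The plan is to reduce the statement for the Schrödinger operator $L = \Delta + q \cdot \scal(g)$ to the already-established result for the exact Laplacian $\Delta_0$ in Proposition \ref{minimal-domain-1}. The strategy rests on two observations: first, that the difference $\Delta - \Delta_0$ between the Laplace Beltrami operators of the perturbed metric $g$ and the exact conical metric $g_0$ is, after multiplication by $x^2$, a lower-order perturbation in the $b$-calculus sense that does not affect the indicial roots $\nu(\lambda)$ governing the asymptotics \eqref{cone-asymptotics}; and second, that the potential $q \cdot \scal(g)$ is likewise subordinate. Concretely, I would argue that $x^2(\Delta - \Delta_0)$ maps $H^\ell_\delta(M)$ into $x^\gamma H^{\ell-1}_\delta(M)$ using the asymptotics \eqref{higher-order} on $h$ together with the norm-equivalence of Lemma \ref{norm-equivalence}, so that the leading $b$-symbol and indicial operator of $x^2 \Delta$ coincide with those of $x^2 \Delta_0$.

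First I would treat the potential term. Under the admissibility assumption of Definition \ref{admissible}, the scalar curvature satisfies $\scal(g) \in x^{-2+\gamma}\mathcal{C}^{k+1,\A}_{\textup{ie}}(M, S_1)$, so $x^2 \cdot q \cdot \scal(g) = O(x^\gamma)$ as $x \to 0$. Multiplication by such a function is a bounded operator $H^\ell_\delta \to x^\gamma H^\ell_\delta$, hence $x^2 L = x^2 \Delta_0 + x^2(\Delta - \Delta_0) + x^2 q \cdot \scal(g)$ differs from $x^2 \Delta_0$ only by terms that decay like $x^\gamma$ relative to the principal part. The key point is that since $\gamma > 0$, none of these perturbations shift the indicial roots, so the partial asymptotic expansion \eqref{cone-asymptotics} holds verbatim for elements of $\dom(L_{\max})$ with the same exponents $\nu(\lambda)$.

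Next I would run the same Melrose $b$-calculus parametrix argument as in the proof of Proposition \ref{minimal-domain-1}. Because $x^2 L$ is still an elliptic $b$-operator with the same indicial family as $x^2 \Delta_0$, there exists a parametrix $Q$ satisfying the mapping properties \eqref{QR}, and given any $\w \in \dom(L_{\max})$ with $L\w = f \in L^2(M)$, I would write $\w = R\w + Q(x^2 f)$ and analyze the two summands exactly as before. The condition $n \geq 3$ forces $\nu(\lambda) \geq 1$ for all eigenvalues $\lambda$ of $\Delta_F$, which collapses minimal and maximal domains and pins down the regularity $\w \in H^2_2(M)$. For the $R\w$ term I would apply $L$ to its polyhomogeneous expansion and observe that the leading indicial polynomial $-\mu(\mu-1) - n\mu$ is unchanged by the subordinate perturbations, so the same dichotomy ($\mu \geq 0$ versus $\mu > -\frac{n+1}{2}+2$) yields $R\w \in H^2_2(M)$.

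I expect the main obstacle to be rigorously justifying that $x^2(\Delta - \Delta_0)$ and $x^2 q \cdot \scal(g)$ are genuinely \emph{lower order} in the precise sense needed for the $b$-calculus parametrix construction to go through with the \emph{same} indicial operator — that is, controlling not merely the pointwise decay but the mapping properties on the weighted Sobolev scale uniformly. The difference of connections $\Phi = \nabla_g - \nabla_{g_0}$ satisfies $|\nabla_{g_0}^k \Phi|_{g_0} = O(x^{-k-1+\gamma})$ from the proof of Lemma \ref{norm-equivalence}, and I would need to convert this into a clean statement that $x^2(\Delta - \Delta_0)$ is a $b$-differential operator of order two whose coefficients vanish like $x^\gamma$, so that it is compact relative to $x^2\Delta_0$ after inverting the principal indicial part. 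Once this perturbative subordination is secured, the remainder of the argument is a direct transcription of Proposition \ref{minimal-domain-1}, and the corollary follows.
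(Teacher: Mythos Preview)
Your approach is viable in spirit but takes a genuinely different route from the paper, and the obstacle you flag at the end is exactly the point where the paper chooses a shortcut instead of pushing through.

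The paper does \emph{not} re-run the $b$-calculus parametrix for $x^2 L$. Instead it writes $V := L - \Delta_0 = (\Delta - \Delta_0) + q\cdot\scal(g)$, notes that $V: H^k_\rho \to H^{k-2}_{-2+\rho+\gamma}$ is bounded, and then uses a cutoff $\phi$ supported in a small collar $(0,\delta)\times F$ to split $V = \phi V + (1-\phi)V$. The piece $(1-\phi)V$ is a smooth interior perturbation, so Proposition~\ref{minimal-domain-1} applies verbatim to $\Delta_0 + (1-\phi)V$ and gives essential self-adjointness with domain $H^2_2(M)$. The piece $\phi V$ satisfies $\|\phi V u\|_{L^2} \leq \varepsilon \|u\|_{H^2_2}$ for $\delta$ small, and the paper then invokes Kato's stability theorem for essential self-adjointness (\cite[Ch.~V, \S4, Thm.~4.5]{Kato}) to pass from $\Delta_0 + (1-\phi)V$ to $L$.

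Your plan --- treating $x^2 L$ as an elliptic $b$-operator with the same indicial family and running the parametrix argument of Proposition~\ref{minimal-domain-1} directly --- runs into the precise issue the paper was engineered to avoid: since $h$ is only assumed to satisfy \eqref{higher-order} and is \emph{not} smooth up to $x=0$, the operator $x^2 L$ is not a classical $b$-differential operator with polyhomogeneous coefficients, so the parametrix $Q$ in \eqref{QR} is not available for it off the shelf. In particular the remainder $R' = \mathrm{Id} - Q\circ x^2 L$ (with $Q$ the parametrix for $x^2\Delta_0$) would no longer land in $\mathcal{A}_{\textup{phg}}(M)$, and the leading-term analysis \eqref{Delta-w-1} of $\w_1 = R'\w$ breaks down. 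One can likely repair this by iterating on the weight gain of $Q\circ x^2 V$, but that is extra work. The paper's Kato-type argument buys you a two-line transfer of essential self-adjointness without touching any of that microlocal machinery; your approach, once patched, would buy a more self-contained argument that does not appeal to abstract perturbation theory.
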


\begin{proof}
Let us now consider the difference 
\begin{equation}\label{V}
V:= L - \Delta_0 = \Delta - \Delta_0 + q \cdot \scal(g): 
H^k_{\rho} (M) \to H^{k-2}_{-2 + \rho + \gamma}(M).
\end{equation}
which is a bounded operator for any $k\in \N$ and any $\rho \in \R$, 
by the admissibility assumption in Definition \ref{admissible}.
Consequently for any $\varepsilon > 0$ there exists some $\delta > 0$
sufficiently small such that for any test function $\phi \in C^\infty_0(M)$ with 
compact support $\supp \, \phi \subset (0,\delta) \times F \subset M$ and any
$u \in C^\infty_0(M)$
\begin{equation}\label{V-estimate}
\| \phi V u\|_{L^2} \leq \sup_{p \in (0,\delta) \times F} x^\gamma(p) \cdot \| u \|_{H^2_2(M)}
\leq \varepsilon \cdot  \| u \|_{H^2_2(M)}.
\end{equation}
Fix $\varepsilon < 1$ and fix a corresponding $\delta > 0$. Proposition \ref{minimal-domain-1}
applies to $\Delta_0 + (1-\phi) V$ instead of $\Delta_0$, since $(1-\phi) V$ is a smooth differential
operator supported away from the conical singularity. In particular
$$
\dom_{\min}(\Delta_0 + (1-\phi) V) = \dom_{\max}(\Delta_0 + (1-\phi) V) = H^2_2(M).
$$
We now employ a result by Kato, \cite[Ch. V, \S 4, Theorem 4.5, p.289]{Kato}
which states the following: Let $T, S$ be two symmetric operators with same domain 
$\dom$ in a Hilbert space. Assume that for any $u \in \dom$
we have the inequality $\| (S-T)u \| \leq a \|u \| + b(\|Tu \| + \|S u\|)$ for $a \geq 0$ and $b\in [0,1)$. Then $S$ is 
essentially self-adjoint if and only if $T$ is. The domains of their unique self-adjoint extensions coincide.

Hence for $\varepsilon>0$ sufficiently small, we conclude
that $L = \Delta_0 + (1-\phi) V + \phi V$
is essentially self-adjoint with same self-adjoint domain $H^2_2(M)$.
\end{proof}

\section{Construction of a heat parametrix} \ \medskip

We consider the Laplace Beltrami operator $\Delta$ on a 
Riemannian manifold $(M,g)$ with isolated conical singularities in the 
sense of Definition \ref{cone-metric}. We consider for any $q\in \R$ the 
Schr\"odinger operator $L= \Delta + q \cdot \scal(g)$. In case of $q\neq 0$
we also impose the admissibility assumption in Definition \ref{admissible}.
In this subsection we consider the heat equation 
\begin{equation*}
(\partial_t + L) \, \w(t,p)  = 0, \ \w(0,p)= \w_0(p),
\end{equation*}
and explain how to construct its fundamental solution, 
following the heat kernel construction of \cite{MazVer}, which
is actually due to Mooers \cite{Moo} in the present conical setting. 
The solution is an integral convolution operator acting on compactly supported sections $\w$ by 
\begin{equation} \label{eqn:hk-on-functions}
\left(H \w\right) (t,p) = \int_M \left(H \left( t, p,\widetilde{p} \right),  
\w(\widetilde{p})\right)_g \dv (\widetilde{p}),
\end{equation}
The integral kernel of $H$ is a distribution on $M^2_h=\R^+\times \widetilde{M}^2$.
Consider the local coordinates near the corner in $M^2_h$ given by $(t, (x,z), (\widetilde{x}, \widetilde{z}))$, 
where $(x,z)$ and $(\widetilde{x}, \widetilde{z})$ are two copies of coordinates on $M$ near the conical singularity. 
The kernel $H(t, (x,z), (\wx,\wz))$ has non-uniform behaviour at the submanifolds
\begin{align*}
&A =\{ (t, (x, z), (\wx,\wz))\in M^2_h \mid t=0, \, x=\wx=0\}, \\
&D =\{ (t, p, \widetilde{p})\in M^2_h \mid t=0, \, p=\widetilde{p}\},
\end{align*}
which requires an appropriate blowup of the heat space $M^2_h$, 
such that the corresponding heat kernel lifts to a polyhomogeneous distribution 
in the sense of the following definition, which we cite from \cite{Mel:TAP} and \cite{MazVer}.

\begin{defn}\label{phg}
Let $\mathfrak{W}$ be a manifold with corners and $\{(H_i,\rho_i)\}_{i=1}^N$ an enumeration 
of its (embedded) boundaries with the corresponding defining functions. For any multi-index $b= (b_1,
\ldots, b_N)\in \C^N$ we write $\rho^b = \rho_1^{b_1} \ldots \rho_N^{b_N}$.  Denote by 
$\mathcal{V}_b(\mathfrak{W})$ the space of smooth vector fields on $\mathfrak{W}$ which lie
tangent to all boundary faces. A distribution $\w$ on $\mathfrak{W}$ is said to be conormal,
if $\w$ is a restriction of a distribution across the boundary faces of $\mathfrak{W}$, 
$\w\in \rho^b L^\infty(\mathfrak{W})$ for some $b\in \C^N$ and $V_1 \ldots V_\ell \w \in \rho^b L^\infty(\mathfrak{W})$
for all $V_j \in \mathcal{V}_b(\mathfrak{W})$ and for every $\ell \geq 0$. An index set 
$E_i = \{(\gamma,p)\} \subset {\mathbb C} \times {\mathbb N_0}$ 
satisfies the following hypotheses:

\begin{enumerate}
\item $\textup{Re}(\gamma)$ accumulates only at $+\infty$,
\item for each $\gamma$ there exists $P_{\gamma}\in \N_0$, such 
that $(\gamma,p)\in E_i$ for all $p \leq P_\gamma$,
\item if $(\gamma,p) \in E_i$, then $(\gamma+j,p') \in E_i$ for all $j \in {\mathbb N_0}$ and $0 \leq p' \leq p$. 
\end{enumerate}
An index family $E = (E_1, \ldots, E_N)$ is an $N$-tuple of index sets. 
Finally, we say that a conormal distribution $\w$ is polyhomogeneous on $\mathfrak{W}$ 
with index family $E$, we write $\w\in \mathscr{A}_{\textup{phg}}^E(\mathfrak{W})$, 
if $\w$ is conormal and if in addition, near each $H_i$, 
\[
\w \sim \sum_{(\gamma,p) \in E_i} a_{\gamma,p} \rho_i^{\gamma} (\log \rho_i)^p, \ 
\textup{as} \ \rho_i\to 0,
\]
with coefficients $a_{\gamma,p}$ conormal on $H_i$, polyhomogeneous with index $E_j$
at any intersection $H_i\cap H_j$ of hypersurfaces. 
\end{defn}

We review briefly the sequence of parabolic blowups as outlined in \cite{MazVer}. 
First we define the parabolic blowup $[M^2_h, A]$ 
(parabolic in the sense that we treat $\sqrt{t}$ as a smooth variable)
as the disjoint union of $M^2_h\backslash A$ with the interior spherical normal bundle of $A$ in $M^2_h$
under appropriate identifications, cf.  \cite{Mel:TAP}. 
The blowup $[M^2_h, A]$ is equipped with the minimal differential structure 
containing smooth functions in the interior of $M^2_h$ and polar coordinates 
on $M^2_h$ around $A$. The interior spherical normal bundle of $A$ defines a new boundary 
hypersurface $-$ the front face ff in addition to the previous boundary faces 
$\{x=0\}, \{\wx=0\}$ and $\{t=0\}$, which lift to rf (the right face), lf (the left face) and 
tf (the temporal face), respectively.  \medskip

\begin{figure}[h]
\begin{center}
\begin{tikzpicture}
\draw (0,0.7) -- (0,2);
\draw[dotted] (-0.1,0.7) -- (-0.1, 2.2);
\node at (-0.4,2) {t};

\draw(-0.7,-0.5) -- (-2,-1);
\draw[dotted] (-0.69,-0.38) -- (-2.05, -0.9);
\node at (-2.05, -0.6) {$x$};

\draw (0.7,-0.5) -- (2,-1);
\draw[dotted] (0.69,-0.38) -- (2.05, -0.9);
\node at (2.05, -0.6) {$\wx$};

\draw (0,0.7) .. controls (-0.5,0.6) and (-0.7,0) .. (-0.7,-0.5);
\draw (0,0.7) .. controls (0.5,0.6) and (0.7,0) .. (0.7,-0.5);
\draw (-0.7,-0.5) .. controls (-0.5,-0.6) and (-0.4,-0.7) .. (-0.3,-0.7);
\draw (0.7,-0.5) .. controls (0.5,-0.6) and (0.4,-0.7) .. (0.3,-0.7);

\draw (-0.3,-0.7) .. controls (-0.3,-0.3) and (0.3,-0.3) .. (0.3,-0.7);
\draw (-0.3,-1.4) .. controls (-0.3,-1) and (0.3,-1) .. (0.3,-1.4);

\draw (0.3,-0.7) -- (0.3,-1.4);
\draw (-0.3,-0.7) -- (-0.3,-1.4);

\node at (1.2,0.7) {\large{rf}};
\node at (-1.2,0.7) {\large{lf}};
\node at (1.1, -1.2) {\large{tf}};
\node at (-1.1, -1.2) {\large{tf}};
\node at (0, -1.7) {\large{td}};
\node at (0,0.1) {\large{ff}};
\end{tikzpicture}
\end{center}
\label{heat-incomplete}
\caption{The heat-space $\mathscr{M}^2_h$.}
\end{figure}

The heat-space $\mathscr{M}^2_h$ is obtained by a second parabolic blowup of  
$[M^2_h, A]$ along the diagonal $D$, lifted to a submanifold of $[M^2_h, A]$. 
We proceed as before by cutting out the lift of $D$ and replacing it with its spherical 
normal bundle, under appropriate identifications, which introduces a new boundary 
face $-$ the temporal diagonal td. The heat space $\mathscr{M}^2_h$ is equipped 
with the blowdown map $\beta: \mathscr{M}^2_h \to M^2_h$ and is illustrated in Figure 1. \medskip

Instead of polar coordinates on the heat space $\mathscr{M}^2_h$, we may consider 
a convenient replacement by projective coordinates near each 
boundary face. Their advantage is that local computations
are much easier in the projective coordinates, their disadvantage is that projective coordinates are 
not globally defined over the entire boundary face. Near the top corner of the front face ff, 
projective coordinates are given by
\begin{align}\label{top-coord}
\rho=\sqrt{t}, \  \xi=\frac{x}{\rho}, \ \widetilde{\xi}=\frac{\wx}{\rho}, \ z, \ \wz.
\end{align}
With respect to these coordinates, $\rho, \xi, \widetilde{\xi}$ are in fact the defining 
functions of the boundary faces ff, rf and lf respectively. 
For the bottom right corner of the front face, projective coordinates are given by
\begin{align}\label{right-coord}
\tau=\frac{t}{\wx^2}, \ s=\frac{x}{\wx}, \ z, \ \wx, \ \widetilde{z},
\end{align}
where in these coordinates $\tau, s, \widetilde{x}$ are
the defining functions of tf, rf and ff respectively. 
For the bottom left corner of the front face,
projective coordinates are obtained by interchanging 
the roles of $x$ and $\widetilde{x}$. Projective coordinates 
on $\mathscr{M}^2_h$ near temporal diagonal are given by 
\begin{align}\label{d-coord}
\eta=\frac{\sqrt{t}}{x}, \ S =\frac{(x-\wx)}{\sqrt{t}},
\ Z =\frac{\wx (z-\wz)}{\sqrt{t}}, \  x, \ z.
\end{align}
In these coordinates, tf is defined as the limit $|(S, Z)|\to \infty$, 
ff and td are defined by $\widetilde{x}, \eta$, respectively. 
The blow-down map $\beta: \mathscr{M}^2_h\to M^2_h$ is in 
local coordinates simply the coordinate change back to 
$(t, (x,z), (\widetilde{x},\widetilde{z}))$. \medskip

We now proceed with the definition of a heat calculus on $\mathscr{M}^2_h$,
which up to rescaling corresponds to the definition provided in \cite[Definition 3.1]{MazVer}.

\begin{defn}\label{heat-calculus-phg}
We define $\Psi^{\, \ell, p, E_{\textup{lf}}, E_{\textup{rf}}}_{\textup{phg}}(M)$
to be the space of all integral operators $A$ with Schwartz kernels $K_A$
that lift to a polyhomogeneous function $\beta^*K_A$ on $\mathscr{M}^2_h$ with 
\begin{enumerate}
\item the index set $(-\dim M - 2 + \ell + \N_0 \, ; \, 0)$ at the front face ff, 
\item the index set $(-\dim M + p + \N_0 \, ; \, 0)$ at the temporal diagonal td, 
\item the index sets $E_{\textup{lf}}, E_{\textup{rf}}$ at the left and right faces, respectively,
\item and vanishing to infinite order at the temporal face tf. 
\end{enumerate}
\end{defn}

This indeed defines a calculus in view of the following composition result, cf. 
\cite[Theorem 5.3]{MazVer} in the more general case of non-isolated conical singularities.
There the authors study the kernels under an additional unitary rescaling \cite[(2.1)]{MazVer},
leading to a shift of the index sets.

\begin{thm}\label{heat-calculus-phg-thm}
For index sets $E_{\textup{rf}}$ and $E'_{\textup{rf}}$ such that 
$E_{\textup{lf}}+E'_{\textup{rf}} > - 1 - n$, we have
$$\Psi^{\, \ell,p,E_{\textup{lf}}, E_{\textup{rf}}}_{\textup{phg}}(M) \circ 
\Psi^{\, \ell',\infty,E'_{\textup{lf}}, E'_{\textup{rf}}}_{\textup{phg}}(M) \subset 
\Psi^{\, \ell+\ell',\infty,P_{\textup{lf}}, P_{\textup{rf}}}_{\textup{phg}}(M),$$
where the front face expansion does not contain logarithmic terms and
\footnote{We define for any index sets $E_1, E_2$ the extended union by 
$E_1 \overline{\cup} E_2 := E_1 \cup E_2 \cup \{(z,p_1 + p_2 +1) 
\mid (z,p_1) \in E_1, (z,p_2) \in E_2\}$.} 
\begin{equation*}
\begin{split}
P_{\textup{lf}}&=E'_{\textup{lf}}\overline{\cup} (E_{\textup{lf}}+ \ell'), \\
P_{\textup{rf}}&=E_{\textup{rf}}\overline{\cup} (E'_{\textup{rf}}+ \ell).
\end{split}
\end{equation*} 
\end{thm}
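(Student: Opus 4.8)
The plan is to deduce this inclusion as the specialization to isolated conical singularities of the general heat-calculus composition theorem \cite[Theorem 5.3]{MazVer}, after undoing the unitary rescaling \cite[(2.1)]{MazVer} under which the latter is stated. I would first recall that the product relevant to the heat calculus is the Duhamel (Volterra) convolution
\[
(A\circ B)(t,p,p'')=\int_0^t\int_M K_A(t-s,p,p')\,K_B(s,p',p'')\,\dv(p')\,ds,
\]
which simultaneously convolves in time and integrates out the middle spatial variable $p'$. The standard route to showing that such a convolution is again polyhomogeneous is to realize it as a pushforward on a resolved \emph{triple heat space} $\mathscr{M}^3_h$, and my exposition would follow exactly that mechanism.

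First I would recall the construction of $\mathscr{M}^3_h$ as an iterated parabolic blowup of $\R^+\times\widetilde{M}^3$, with the composite time $t=t_1+t_2$ treated parabolically, arranged so that the three stretched projections
\[
\pi_L,\ \pi_C,\ \pi_R:\ \mathscr{M}^3_h\longrightarrow\mathscr{M}^2_h,
\]
covering the spatial projections $(p,p',p'')\mapsto(p,p'),\,(p,p''),\,(p',p'')$ respectively, all lift to b-fibrations. The composite kernel is then
\[
K_{A\circ B}=(\pi_C)_*\bigl(\pi_L^*K_A\cdot\pi_R^*K_B\bigr),
\]
the Riemannian density being absorbed into the kernels. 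The verification that all three projections are simultaneously b-fibrations is the geometric heart of the matter; it is precisely what \cite[Theorem 5.3]{MazVer} supplies in the more general non-isolated case, and passing to an isolated cone only removes the fibre $F$-directions while leaving this structure intact.

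Granting the b-fibration property, the index sets are tracked in three moves. I would first pull back $K_A$ and $K_B$: polyhomogeneity is preserved under $\pi_L^*,\pi_R^*$, with index sets rescaled by the exponents of the boundary defining functions under the projections. Multiplying $\pi_L^*K_A$ by $\pi_R^*K_B$ yields a polyhomogeneous density whose index set at each boundary hypersurface is the sum of the two factors' sets, extended unions $\overline{\cup}$ appearing exactly at those faces onto which two distinct faces of $\mathscr{M}^2_h$ are carried. Finally I would invoke Melrose's pushforward theorem for polyhomogeneous distributions under b-fibrations \cite{Mel:TAP}: applying $(\pi_C)_*$ produces a polyhomogeneous kernel, and reading off the image gives $P_{\textup{lf}}=E'_{\textup{lf}}\,\overline{\cup}\,(E_{\textup{lf}}+\ell')$ and $P_{\textup{rf}}=E_{\textup{rf}}\,\overline{\cup}\,(E'_{\textup{rf}}+\ell)$, the front-face orders adding to $\ell+\ell'$, the temporal diagonal inheriting the order $\infty$ from the second factor, and infinite-order vanishing at $\textup{tf}$ being preserved. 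The hypothesis $E_{\textup{lf}}+E'_{\textup{rf}}>-1-n$ enters precisely as the integrability condition ensuring convergence of the $\pi_C$-fibre integral over the face where the left face of $A$ meets the right face of $B$; the absence of logarithmic terms in the front-face expansion is then a separate consequence of the factors carrying pure-power expansions at $\textup{ff}$ together with the pushforward introducing no coincident exponents there.

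The remaining point, and in this reduction the main obstacle, is the rescaling bookkeeping. Since \cite[Theorem 5.3]{MazVer} is formulated after the unitary conjugation \cite[(2.1)]{MazVer}, its index sets at $\textup{lf}$ and $\textup{rf}$ differ from ours by a uniform shift; because that shift enters $E_{\textup{lf}},E_{\textup{rf}}$ and $E'_{\textup{lf}},E'_{\textup{rf}}$ symmetrically, while the composition formula combines them only through $E_{\textup{lf}}+\ell'$, $E'_{\textup{rf}}+\ell$ and extended unions, the formula is invariant under restoring the unrescaled normalization. The hard part is therefore not the functional analysis but confirming that, after undoing the conjugation and dropping the edge directions, the front-face order convention $-\dim M-2+\ell$ and the integrability threshold $-1-n$ are exactly the images of the corresponding quantities in \cite{MazVer}; once this is checked the stated inclusion follows.
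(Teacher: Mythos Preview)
Your approach is exactly what the paper does: the paper gives no proof of this theorem at all, but simply cites \cite[Theorem 5.3]{MazVer} for the general (non-isolated) edge case and remarks that the unitary rescaling \cite[(2.1)]{MazVer} shifts the index sets. Your proposal spells out the underlying triple-space/pushforward mechanism and the rescaling bookkeeping in more detail than the paper itself, but the reduction is identical.
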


Ultimately, we will have to deal with operators whose integral kernels are not necessarily
polyhomogeneous any longer, but are only 
conormal with certain bounds. We therefore we extend the definition of a 
heat calculus above to include the cases of interest as follows.

\begin{defn}\label{heat-calculus}
We define $\Psi^{\, \ell, p, \mu_1, \mu_2}(M)$
to be the space of all integral operators $A$ with Schwartz kernels $K_A$
that lift to conormal functions $\beta^*K_A$ on $\mathscr{M}^2_h$, vanishing to 
infinite order at tf and, writing $\rho_*$ for a defining function of the boundary face $*$
$$
\beta^*K_A = \rho_{\textup{ff}}^{- \dim M - 2 + \ell} 
\, \rho_{\textup{td}}^{- \dim M + p}
\, \rho_{\textup{lf}}^{\mu_1}
\, \rho_{\textup{rf}}^{\mu_2} 
 \, L^\infty(\mathscr{M}^2_h).
$$
If $E_{\textup{lf}} \geq \mu_1, E_{\textup{rf}} \geq \mu_2$, then the polyhomogeneous and 
conormal calculi are related by
$$
\Psi^{\, \ell, p, E_{\textup{lf}}, E_{\textup{rf}}}_{\textup{phg}}(M) 
\subseteq \Psi^{\, \ell, p, \mu_1, \mu_2}(M).
$$
\end{defn}

The composition result in Theorem \ref{heat-calculus-phg-thm} extends to the 
case of conormal integral kernels with bounds, compare e.g. \cite{AlGe},
using a version of Melrose's Pushforward theorem with bounds. We concludes with 
the following composition result. 

\begin{thm}\label{heat-calculus-thm}
For $\mu_1+\mu'_2 > -1-n$, we have
$$\Psi^{\, \ell, p, \mu_1, \mu_2}(M) \circ 
\Psi^{\, \ell', \infty, \mu'_1, \mu'_2}(M) \subset 
\Psi^{\, \ell+\ell',\infty,\mu''_1, \mu''_2}(M),$$
where we have set $\mu''_k := \min \{\mu_k, \mu'_k + \ell' \}$
for $k=1,2$.
\end{thm}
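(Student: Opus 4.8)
The plan is to follow the same triple-space strategy that underlies the polyhomogeneous composition result, Theorem~\ref{heat-calculus-phg-thm}, replacing Melrose's pushforward theorem for polyhomogeneous distributions by its conormal analogue with bounds. Recall that the composition of $A \in \Psi^{\ell,p,\mu_1,\mu_2}(M)$ and $B \in \Psi^{\ell',\infty,\mu'_1,\mu'_2}(M)$ is the parabolic (Duhamel) convolution
\begin{equation*}
K_{A\circ B}(t,p,\widetilde{p}) = \int_0^t \int_M K_A(t-s, p, q)\, K_B(s, q, \widetilde{p}) \, \dv(q) \, ds,
\end{equation*}
so that the intermediate spatial variable $q$ and intermediate time $s$ are integrated out. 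First I would introduce a triple heat space $\mathscr{M}^3_h$, obtained from $\R^+\times\R^+\times\widetilde{M}^3$ by the sequence of parabolic blow-ups producing $\mathscr{M}^2_h$, performed simultaneously in all three pairs of spatial factors, exactly as in \cite[\S 5]{MazVer}. This space carries three b-fibrations $\pi_{L}, \pi_{C}, \pi_{R}\colon \mathscr{M}^3_h \to \mathscr{M}^2_h$ onto the double spaces associated with the pairs (output, intermediate), (intermediate, input) and (output, input), and the convolution is precisely the pushforward $K_{A\circ B} = (\pi_C)_* \bigl(\pi_L^* K_A \cdot \pi_R^* K_B\bigr)$ against the lifted density.

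Second I would track the kernels through pullback, product and pushforward. By the pullback theorem for conormal distributions, $\pi_L^* K_A$ and $\pi_R^* K_B$ are conormal on $\mathscr{M}^3_h$ with bounds dictated by the boundary behaviour prescribed in Definition~\ref{heat-calculus} and by the combinatorics of how $\pi_L$ and $\pi_R$ map the boundary hypersurfaces of $\mathscr{M}^3_h$ onto those of $\mathscr{M}^2_h$; the product then has, at each hypersurface of $\mathscr{M}^3_h$, exponent equal to the sum of the contributing exponents, with infinite-order vanishing at all lifted temporal faces. The unique hypersurface of $\mathscr{M}^3_h$ that $\pi_C$ maps into the interior $-$ the face on which the intermediate variable $q$ approaches the cone tip $-$ carries the combined weight of the left face of $A$ and the right face of $B$; up to the density normalisation its exponent is $\mu_1 + \mu'_2$, so the hypothesis $\mu_1 + \mu'_2 > -1-n$ is exactly the condition ensuring convergence of the fiber integral there, the summand $-1-n$ reflecting the Jacobian of $\pi_C$ together with the $n$-dimensional link $F$.

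Third comes the pushforward step, which I expect to be the main obstacle. In the polyhomogeneous setting one invokes Melrose's pushforward theorem \cite{Mel:TAP}; here the kernels are only conormal with $L^\infty$-bounds, so I would instead use the version with bounds as in \cite{AlGe}, which asserts that under a b-fibration a conormal-with-bounds distribution pushes forward to a conormal-with-bounds distribution, the exponent at each target hypersurface being the minimum of the exponents over the preimage hypersurfaces (the extended-union operation $\overline{\cup}$ of the polyhomogeneous case collapsing to a minimum once no logarithmic coefficients are retained), provided the integrability condition above holds at every face sent into the interior. Applying this, the front-face exponents add to $\ell+\ell'$, and the left- and right-face exponents combine exactly as in Theorem~\ref{heat-calculus-phg-thm} to yield the stated weights $\mu''_k = \min\{\mu_k,\mu'_k+\ell'\}$.

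Finally, because the second factor vanishes to infinite order at the temporal diagonal and the Duhamel time-convolution defining the composition carries this through the pushforward, $A\circ B$ vanishes to infinite order at td, giving the index $\infty$ in the second slot; the same feature ensures that no logarithmic terms are produced at the front face, exactly as in the polyhomogeneous statement. The residual work is then the purely combinatorial bookkeeping of exponents under $\pi_L, \pi_R, \pi_C$, which is routine once the triple space and the conormal pushforward theorem are in hand. The genuine content, and the step requiring care, is the verification that $\mathscr{M}^3_h$ carries the three required b-fibrations and that the pushforward theorem of \cite{AlGe} applies with the weights appearing here.
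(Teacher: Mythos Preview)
Your proposal is correct and follows essentially the same approach as the paper. In fact the paper does not give a proof of this theorem at all: it simply remarks that Theorem~\ref{heat-calculus-phg-thm} ``extends to the case of conormal integral kernels with bounds, compare e.g.\ \cite{AlGe}, using a version of Melrose's Pushforward theorem with bounds,'' and then states the result. What you have written is a faithful and more detailed unpacking of exactly that sentence --- the triple heat space $\mathscr{M}^3_h$ of \cite[\S5]{MazVer}, the three b-fibrations, the integrability condition at the face where the intermediate spatial variable hits the cone tip, and the replacement of Melrose's polyhomogeneous pushforward by the conormal-with-bounds version of \cite{AlGe}, under which the extended union $\overline{\cup}$ collapses to a minimum.
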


We may now proceed with constructing the fundamental solution to the heat equation.
Exactly as in the heat kernel construction in \cite[\S 3.2]{MazVer} we construct 
an initial heat parametrix as follows. We choose any smooth cutoff function $\chi \in C^\infty[0,\infty)$
such that $\chi \restriction [0,\varepsilon] \equiv 1$ and $\chi \restriction [2\varepsilon, \infty) \equiv 0$
for $\varepsilon > 0$ sufficiently small. Then we set in terms of projective coordinates \eqref{top-coord}
\begin{equation*}
\beta^* H_0 (\tau, \xi, z, \widetilde{\xi},\widetilde{z}) := \chi(\tau) \cdot 
\tau^{-n}(\xi \widetilde{\xi})^{\frac{n}{2}}
\bigoplus_\lambda \frac{(\xi \widetilde{\xi})^{\frac12}} {2\tau} I_{\nu(\lambda)}\left(\frac{\xi\widetilde{\xi}}{2\tau}\right)
e^{-\frac{\xi^2+\widetilde{\xi}^2}{4\tau}}\phi_{\lambda}(z)\phi_{\lambda}(\widetilde{z}),
\end{equation*}
where $\lambda$ runs over the spectrum of $\Delta_F$, counted with multiplicities, with corresponding
eigenfunctions given by $\phi_{\lambda}$, $\nu(\lambda)$ is defined by \eqref{nu} and 
$I_{\nu(\lambda)}$ denotes the modified Bessel function of first kind. The factor 
$\tau^{-n}(\xi \widetilde{\xi})^{\frac{n}{2}}$ in fact does not appear in \cite[(3.9)]{MazVer}, 
since the analysis there is performed under a unitary rescaling, cf. \cite[(2.6)]{BV}.
We can regard $\beta^* H_0$ as a polyhomogeneous function on $\mathscr{M}^2_h$, where identifying
the integral kernel $H_0$ with the corresponding integral operator, we find
(check the statement in local projective coordinates)
\begin{align}\label{E-index-set}
H_0 \in \Psi^{\, 2, 0, E, E}_{\textup{phg}}(M), \quad 
E:=\left\{ \left(-\frac{n-1}{2} + \sqrt{\lambda + \frac{n-1}{2}} \, ; \, 0 \right) 
\mid \lambda \in \textup{Spec} \, \Delta_F\right\}.
\end{align}
Now by the standard procedure, cf. \cite{Mel:TAP} and \cite{MazVer}, 
$H_0$ can be refined at td, staying polyhomogeneous with 
same index sets, such that it solves the heat equation up to an error 
$$
(\partial_t + L) H_0 = \textup{Id} + R
$$
where $\beta^* R$ vanishes to infinite order at td and tf, with the index set $E$ at lf.
Now, in contrast to \cite{MazVer}, $\beta^* R$ is not polyhomogeneous unless the higher order term $h$
in the conical metric is smooth up to $x=0$. Generally we can only conclude 
like in \eqref{V}, that
$$
\beta^* R = \rho_{\textup{ff}}^{- \dim M -2 + \gamma} \, \rho_{\textup{rf}}^{-2+\gamma} \, L^\infty(\mathscr{M}^2_h),
\quad \textup{i.e.} \ R \in \Psi^{\, \gamma, \infty, E, -2+\gamma}(M).
$$
We wish to invert $(\textup{Id} + R)$ and consider the following formal Neumann series
$$
\textup{Id} + \sum_{k=1}^\infty R^k =: \textup{Id} + P,
$$ 
where the compositions $R^k$ are defined with convolution in time. Such a series is generally
referred to as a Volterra series. In fact a slightly finer analysis, cf. \cite{Mel:TAP} and also 
\cite{BGV}, shows that this formal series actually converges. While \cite{BGV} refers to the 
case of compact manifolds, and \cite{Mel:TAP} to manifolds with cylindrical ends, the estimates translate
immediately to our setting, with convergence being a general feature of such Volterra series.\medskip

We conclude in view of Theorem \ref{heat-calculus-thm} that $P \in \Psi^{\, \gamma, \infty, 0, -2+\gamma}(M)$.
At the left boundary face lf we can be more precise. Writing $N_{\textup{lf}}(P)$ 
for the restriction of $P$ to lf, we in fact have the following asymptotics
\begin{align}\label{P-lf}
\beta^* P = N_{\textup{lf}}(P) + O(\rho_{\textup{lf}}^{\overline{\gamma}}), \ \rho_{\textup{lf}} \to 0,
\end{align}
where we write
$$
\overline{\gamma} := \min \left\{\gamma,  
\mu(\lambda)= -\frac{n-1}{2} + \sqrt{\lambda + \left(\frac{n-1}{2}\right)} \mid \lambda 
\in \textup{Spec} \, \Delta_F \backslash \{0\} \right\}.
$$
We may now conclude our heat parametrix construction with the following

\begin{thm}\label{heat-asymptotics}
Consider a Riemannian manifold $(M,g)$ with isolated conical singularities in the 
sense of Definition \ref{cone-metric}. Consider any $q\in \R$. In case of $q\neq 0$
we additionally impose the admissibility assumption in Definition \ref{admissible}. Then the Schr\"odinger operator 
$L = \Delta + q \cdot \scal(g)$ admits a fundamental solution $H$ to its heat equation
$$
H:= H_0 \circ (\textup{Id} + R)^{-1} = H_0 \circ (\textup{Id} + P),
$$
such that the lift $\beta^*H$ is a conormal function on $\mathscr{M}^2_h$ with
$H \in \Psi^{\, 2, 0, 0, 0}(M)$, where writing $N_{\textup{lf}}(H)$ and $N_{\textup{rf}}(H)$ 
for the restriction of $H$ to the left and right boundary hypersurfaces, respectively,
we in fact have the following asymptotics at lf and rf 
\begin{equation}\label{lf-rf-asymptotics}
\begin{split}
&\beta^* H = N_{\textup{lf}}(H) + O(\rho_{\textup{lf}}^{\overline{\gamma}}), \ \rho_{\textup{lf}} \to 0, \\
&\beta^* H = N_{\textup{rf}}(H) + O(\rho_{\textup{rf}}^{\overline{\gamma}}), \ \rho_{\textup{lf}} \to 0,
\end{split}
\end{equation}
where
$$
\overline{\gamma} := \min \left\{\gamma,  
\mu(\lambda)= -\frac{n-1}{2} + \sqrt{\lambda + \left(\frac{n-1}{2}\right)} \mid \lambda 
\in \textup{Spec} \, \Delta_F \backslash \{0\} \right\}.
$$\end{thm}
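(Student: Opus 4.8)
The plan is to assemble $H$ from the pieces already constructed above and to read off all of its structure from the composition calculus of Theorems~\ref{heat-calculus-phg-thm} and~\ref{heat-calculus-thm}. First I would check that $H := H_0 \circ (\textup{Id}+P)$ is genuinely a fundamental solution. Since the Volterra (Neumann) series $\textup{Id}+P$ converges and is the two-sided inverse $(\textup{Id}+R)^{-1}$ in the heat calculus, one computes $(\partial_t + L)H = \bigl((\partial_t+L)H_0\bigr)\circ(\textup{Id}+P) = (\textup{Id}+R)\circ(\textup{Id}+P)=\textup{Id}$. The refinement of $H_0$ at the temporal diagonal, which makes $R$ vanish to infinite order there, forces $H_0\circ P$ to vanish to infinite order at $\textup{td}$ as well, so $H$ and $H_0$ share the same $\textup{td}$-normal operator, namely the Euclidean heat kernel; this is exactly the correct initial condition. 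Since $L$ is essentially self-adjoint by Corollary~\ref{minimal-domain-2}, the semigroup $e^{-tL}$ is unique, $H$ is identified with it, and in particular $H(t,p,\widetilde{p}) = H(t,\widetilde{p},p)$ is symmetric.

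Next I would establish the membership $H \in \Psi^{\,2,0,0,0}(M)$. Write $H = H_0 + H_0\circ P$. The first summand lies in $\Psi^{\,2,0,E,E}_{\textup{phg}}(M)\subseteq \Psi^{\,2,0,0,0}(M)$, because the index set $E$ has smallest exponent $\mu(0) = -\frac{n-1}{2} + \nu(0) = 0$, attained at $\lambda=0$, and is strictly positive for $\lambda > 0$. For the second summand I apply Theorem~\ref{heat-calculus-thm} to $H_0\in\Psi^{\,2,0,0,0}(M)$ and $P\in\Psi^{\,\gamma,\infty,0,-2+\gamma}(M)$; the integrability hypothesis is satisfied because $\gamma>0$ and $n\geq 3$. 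Tracking the extended unions, the left face stays at exponent $0$, and the right face stays at $0$ once the order-$2$ gain of the outer factor $H_0$ is accounted for (so that the right-face exponent $-2+\gamma$ of $P$ is shifted to $\gamma$). Hence $H_0\circ P\in\Psi^{\,2+\gamma,\infty,0,0}(M)$, and summing the two contributions, whose front-face and temporal-diagonal orders are dominated by $H_0$, gives $H\in\Psi^{\,2,0,0,0}(M)$.

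Finally, I would read off the boundary asymptotics by keeping the subleading terms in the same composition. At the left face, $H_0$ is polyhomogeneous with a gap of size $\mu(\lambda_1)$ above its leading $\rho_{\textup{lf}}^{0}$ coefficient, where $\lambda_1$ is the smallest positive eigenvalue of $\Delta_F$, so $H_0 = N_{\textup{lf}}(H_0) + O(\rho_{\textup{lf}}^{\mu(\lambda_1)})$. For $H_0\circ P$ I feed in the refined expansion~\eqref{P-lf}, $\beta^*P = N_{\textup{lf}}(P) + O(\rho_{\textup{lf}}^{\overline{\gamma}})$: its leading part produces $N_{\textup{lf}}(H_0\circ P)$ while its remainder, together with the shifted contribution $E_{\textup{lf}}(H_0)+\gamma$ of $H_0$, is $O(\rho_{\textup{lf}}^{\overline{\gamma}})$. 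Since $\overline{\gamma} = \min\{\gamma,\mu(\lambda_1)\}\leq\mu(\lambda_1)$, adding the two pieces yields $\beta^*H = N_{\textup{lf}}(H) + O(\rho_{\textup{lf}}^{\overline{\gamma}})$. The right-face statement then follows at once from the symmetry of $H$ recorded above; alternatively one argues directly, the relevant right-face exponents of $H_0\circ P$ being $E_{\textup{rf}}(H_0)\,\overline{\cup}\,(E_{\textup{rf}}(P)+2)$, whose nonleading terms start at $\min\{\mu(\lambda_1),\gamma\} = \overline{\gamma}$.

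The step I expect to be the main obstacle is precisely this last right-face analysis. Because $P$ is only conormal, and not polyhomogeneous, at the right face, one cannot simply read off an index-set expansion there, and the conormal composition theorem by itself yields only a bounded ($O(\rho_{\textup{rf}}^{0})$) contribution from $H_0\circ P$. The point is to exploit the polyhomogeneity of the \emph{outer} factor $H_0$, with its explicit spectral gap $\mu(\lambda_1)$ above the constant mode, to pin down a well-defined leading coefficient $N_{\textup{rf}}(H)$ and a genuine power-law remainder of the sharp order $\rho_{\textup{rf}}^{\overline{\gamma}}$, rather than mere boundedness. Invoking the symmetry $H(t,p,\widetilde{p})=H(t,\widetilde{p},p)$ is the cleanest way to transfer the more transparent left-face estimate to the right face and bypass this difficulty altogether.
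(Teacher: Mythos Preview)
Your proposal is correct and follows essentially the same route as the paper: decompose $H = H_0 + H_0\circ P$, apply the conormal composition Theorem~\ref{heat-calculus-thm} to place $H$ in $\Psi^{\,2,0,0,0}(M)$, read off the left-face expansion from \eqref{P-lf} together with the spectral gap in the index set $E$, and then invoke symmetry of $H$ for the right face. The paper's proof is in fact only a few lines and does precisely this.

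The one place where you add genuine content is the justification of symmetry. The paper simply writes ``by symmetry of $H$'' without further comment; you correctly observe that $H_0\circ P$ is not manifestly symmetric from the construction, and you close this gap by invoking essential self-adjointness of $L$ (Corollary~\ref{minimal-domain-2}) to identify $H$ with the unique semigroup $e^{-tL}$, whence $H(t,p,\widetilde p)=H(t,\widetilde p,p)$. This is a welcome clarification. Your alternative ``direct'' right-face argument is, as you yourself flag, not quite rigorous because $P$ is only conormal at $\textup{rf}$ and so Theorem~\ref{heat-calculus-phg-thm} does not apply there; the symmetry shortcut is indeed the cleanest way, and it is exactly what the paper uses.
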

 
\begin{proof}
By construction the fundamental solution $H$ is given by $H = H_0 + H_0 \circ P$ where
$H_0 \in \Psi^{\, 2, 0, E, E}_{\textup{phg}}(M)$ and $P \in \Psi^{\, \gamma, \infty, 0, -2+\gamma}(M)$.
Consequently, by Theorem \ref{heat-calculus-thm} we conclude $H \in \Psi^{\, 2, 0, 0, 0}(M)$.
By \eqref{P-lf} and by the explicit structure of the index set $E$ in \eqref{E-index-set}, the first
statement in \eqref{lf-rf-asymptotics} on the left face asymptotics for $H$ follows. The second
statement in \eqref{lf-rf-asymptotics} follows by symmetry of $H$. 
\end{proof}
  
\begin{remark}
If $q \cdot \scal(g) \geq 0$, then one can easily identify the fundamental solution $H$ with the 
heat operator for the Friedrichs self-adjoint extension of $L$.
\end{remark}

\section{Mapping properties of the fundamental solution} \ \medskip

In this section we study how the fundamental solution $H$, 
which we also refers to as the \emph{heat operator} acts between 
spaces of functions of prescribed asymptotics. This will use the microlocal 
heat kernel description established in Theorem \ref{heat-asymptotics}. We proceed
using the notation therein and additionally introduce function spaces that specify
asymptotics at $x=0$, which remains stable under differentiation.

\begin{defn}
Let us write $\mathscr{C}_\varepsilon(F)=(0,\varepsilon) \times F$ 
for any $\varepsilon \in (0,1)$. Consider a smooth cutoff function $\phi_\varepsilon \in C^\infty(M)$
such that $\phi_\varepsilon \restriction \mathscr{C}_{\varepsilon/2}(F) \equiv 1$ and 
$\phi_\varepsilon \restriction M \backslash \mathscr{C}_{\varepsilon}(F) \equiv 0$. 
Consider any choice of local bases
$\{X_1, \cdots, X_m\}$ of $\V_b$. Then we define 
for any $f: (0,\infty) \to \R$ and any $k\in \N$
\begin{align*}
\mathscr{O}^k_e(f(x)) := \{u: &M \to \R \mid \exists \, \varepsilon > 0, C > 0 \, \forall \, \ell \leq k, \, 
(j_1, \cdots, j_\ell) : \\
&| \left( X_{j_1} \circ \cdots \circ X_{j_k} u \right) 
\cdot \phi_\varepsilon  | \leq C \, | X_{j_1} \circ \cdots \circ X_{j_k} f|, \\ &u \cdot (1-\phi_\varepsilon) \in H^2_0(M) \}.
\end{align*}
We write $\mathscr{O}(f(x)) := \mathscr{O}^0_e(f(x))$.
\end{defn}

\begin{thm}\label{mapping-theorem}
Assume $N \leq n$. We write $H$ for the heat operator acting with convolution in time, 
and denote by $H(t)$ the heat operator acting without time convolution. $H$ solves the
inhomogeneous, while $H(t)$ the homogeneous heat equation. Then for any 
$\varepsilon > 0$ we obtain the following mapping properties.
\begin{equation*}
\begin{split}
H: \mathscr{O}(x^{-N}) \to \left\{
\begin{split} 
&\mathscr{O}^2_e(x^{-N+2}), \ \, \textup{if} \, N> 2, \\
&\mathscr{O}^2_e(\log (x)), \, \textup{if} \, N= 2, \\
&\mathscr{O}^2_e(1), \quad \quad \ \, \textup{if} \, N< 2,
\end{split}\right. \
H(t) : \mathscr{O}(x^{-N}) \to \begin{split}
&\quad t^{-\frac{N}{2}-\varepsilon} \ \mathscr{O}^2_e(1) \ \cap \\
&\left\{
\begin{split} 
&\mathscr{O}^2_e(x^{-N}), \quad \textup{if} \, N> 0, \\
&\mathscr{O}^2_e(\log (x)), \textup{if} \, N= 0, \\
&\mathscr{O}^2_e(1), \qquad \ \textup{if} \, N< 0.
\end{split}\right.
\end{split}
\end{split}
\end{equation*} 
\end{thm}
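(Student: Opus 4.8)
The plan is to read off every claimed asymptotic from the microlocal description of the heat kernel in Theorem \ref{heat-asymptotics} together with the pushforward theorem underlying Theorem \ref{heat-calculus-thm}, after localizing near the cone. Away from $\{x=0\}$, i.e. on $M\setminus\mathscr{C}_\varepsilon(F)$, both $H$ and $H(t)$ are standard interior smoothing operators and preserve $H^2_0(M)$, so by the very definition of $\mathscr{O}^k_e$ it suffices to control the integral on $\mathscr{C}_\varepsilon(F)$ using the cutoff $\phi_\varepsilon$. There I would write the action of the operators in the projective coordinates \eqref{top-coord}, \eqref{right-coord}, \eqref{d-coord} on $\mathscr{M}^2_h$ and analyze the lift $\beta^*H$ face by face.

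First I would check that the defining integral converges. Near the right face $\mathrm{rf}=\{\widetilde{x}=0\}$ the lift $\beta^*H$ has leading exponent $0$ by Theorem \ref{heat-asymptotics}, so against an input $u\in\mathscr{O}(x^{-N})$ and the conical density $\sim\widetilde{x}^{\,n}\,d\widetilde{x}$ the integrand behaves like $\widetilde{x}^{\,n-N}\,d\widetilde{x}$ as $\widetilde{x}\to0$. This is integrable exactly when $n-N>-1$, which is guaranteed by the hypothesis $N\leq n$; this is the one place the bound $N\leq n$ is used.

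Next come the spatial asymptotics as $x\to0$, the content at the left face $\mathrm{lf}$. I would push the product of $\beta^*H$ with the pullback of $u$ in the right variable forward along the fibration $\mathscr{M}^2_h\to M$ (integrating in $\widetilde{p}$, and for $H$ also convolving in time). By the pushforward theorem the $\mathrm{lf}$-index of the image is assembled from the $\mathrm{lf}$-index of $H$, whose leading exponent is the indicial root $\mu(0)=0$ of $L$ with next term at $\overline{\gamma}$ (Theorem \ref{heat-asymptotics}), together with the exponent $-N$ transported through the front face. For the time-convolution operator $H$, this convolution inverts $\partial_t+L$, and since $L=x^{-2}P$ with $P$ a second-order $b$-elliptic operator, the inversion shifts the weight by $x^{2}$, producing a particular solution with leading exponent $-N+2$. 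Comparing $-N+2$ with the indicial root $0$ yields the three cases: for $N>2$ the term $x^{-N+2}$ is the more singular and dominates, giving $\mathscr{O}^2_e(x^{-N+2})$; for $N<2$ it is subleading to the constant indicial solution, giving $\mathscr{O}^2_e(1)$; and at the resonance $N=2$ the coincidence $-N+2=0=\mu(0)$ forces a logarithm, giving $\mathscr{O}^2_e(\log x)$. For the homogeneous operator $H(t)$ there is no weight gain, the leading $\mathrm{lf}$-exponent is $\min\{-N,0\}$, giving $\mathscr{O}^2_e(x^{-N})$ for $N>0$, $\mathscr{O}^2_e(1)$ for $N<0$, and $\mathscr{O}^2_e(\log x)$ at the resonance $N=0$, with the remainder in every case controlled at rate $\overline{\gamma}$. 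The global factor $t^{-N/2-\varepsilon}$ for $H(t)$ comes from parabolic scaling at the front face: in the coordinates $\rho=\sqrt{t}$, $\widetilde{\xi}=\widetilde{x}/\rho$, the input $\widetilde{x}^{-N}$ reads $t^{-N/2}\widetilde{\xi}^{-N}$, so on the region $x\sim\sqrt{t}$ the value of $H(t)u$ is bounded by $t^{-N/2}$ times a bounded profile, the extra $\varepsilon$ absorbing borderline logarithms and the $\overline{\gamma}$-correction. Finally, to upgrade from $\mathscr{O}^0_e$ to the claimed two $b$-derivatives, I would use that fields in $\V_b$ are tangent to $\mathrm{ff}$, $\mathrm{lf}$ and $\mathrm{rf}$, so differentiating the kernel in the left variable preserves all index sets; equivalently, $(\partial_t+L)Hu=u$ together with the $b$-ellipticity of $L=x^{-2}P$ recovers two $b$-derivatives of $Hu$, the front-face order $\ell=2$ of $H$ being exactly what makes this admissible.

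The hard part will be the $\mathrm{lf}$ analysis: extracting the precise leading term and, in particular, producing the logarithms at the thresholds $N=2$ (for $H$) and $N=0$ (for $H(t)$), where the exponent of the particular solution resonates with the indicial root $\mu(0)=0$, while simultaneously keeping the remainder uniformly at the rate $\overline{\gamma}$ and uniform in $t$ on compact intervals. Everything else is a matter of bookkeeping with the index sets, but the resonant thresholds require a genuine computation of the indicial behavior rather than a mere index count.
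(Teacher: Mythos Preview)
Your approach is correct and, at its core, the same as the paper's: localize near the cone, invoke the conormal description of $\beta^*H$ from Theorem \ref{heat-asymptotics}, and read off the asymptotics face by face. The only difference is presentational. The paper carries this out by writing down the integrals explicitly in each of the projective coordinate patches \eqref{top-coord}, \eqref{right-coord}, \eqref{d-coord} at the four corners of the front face and estimating them by hand, whereas you package the same computation in the language of the pushforward theorem and index-set bookkeeping. In particular, what you flag as the ``hard part''---producing the logarithms at the thresholds $N=2$ (for $H$) and $N=0$ (for $H(t)$)---falls out in the paper's approach simply from evaluating $\int_x^1 \rho^{-N+1}\,d\rho$ at the top corner, so the resonance you anticipate is a one-line integral rather than an indicial analysis. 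Your explanation of the extra $\varepsilon$ in $t^{-N/2-\varepsilon}$ is exactly right: it appears because the corresponding integral at the lower corners is borderline when the free parameter equals $N/2$. Your remark about recovering two $\V_b$-derivatives via tangency of $\V_b$ to the boundary faces is a valid way to upgrade from $\mathscr{O}$ to $\mathscr{O}^2_e$; the paper leaves this step implicit.
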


\begin{proof}
Consider $u \in \mathscr{O}(x^{-N})$. Then, writing 
$\rho_*$ for a defining function of a boundary face $*$ of the heat space
$\mathscr{M}^2_h$ we obtain in view of Theorem \ref{heat-asymptotics}, 
checking the lifts to $\mathscr{M}^2_h$ e.g. in projective coordinates 
\eqref{top-coord}, \eqref{right-coord} or \eqref{d-coord} and writing $m=\dim M$
\begin{equation}
\beta^* \left(H(t, p, \widetilde{p}) u(\widetilde{p}))\right)
= \rho_{\ff}^{-N-m} \rho_{\lf}^{-N} \rho_{\rf}^0 \, \rho_{\td}^{-m} \, \rho_{\tf}^\infty \cdot G, 
\end{equation}
where $G$ is a bounded conormal function on $\mathscr{M}^2_h$.
We now proceed with uniform estimates near the various corners of the 
front face in the heat space $\mathscr{M}^2_h$. The interior regularity of $H u$ and
$H(t) u$ is classical. Therefore, by partition of unity we may 
assume that the lifted heat kernel $\beta^* H$ is compactly supported near the 
various corners of the front face, and prove the estimates in each case separately. \medskip

\emph{Estimates near the right lower corner of the front face.} 
Assume that $\beta^* H$ is compactly supported near the right lower corner of the front face. 
Then, using the projective coordinates \eqref{right-coord} we obtain for some 
constant $C>0$ and any $K \in \R$
\begin{align*}
\left| \int_0^t \int_M H(t-\wt, p, \widetilde{p}) u(\widetilde{p}) d\wt \dv_g(\widetilde{p}) \right|
&\leq C \int_x^1 \wx^{\, -N+1} s^{\, -N+n} \, d \wx, \\
\left| \int_M H(t, p, \widetilde{p}) u(\widetilde{p}) \dv_g(\widetilde{p}) \right|
&\leq C \int_x^1 \wx^{\, -N-1} \left(\frac{t}{\ \wx^2} \right)^{-K} d \wx
\\ &= C \, t^{-K}  \int_x^1 \wx^{\, -N +2K-1} d \wx.
\end{align*}

\emph{Estimates near the left lower corner of the front face.} 
Assume that $\beta^* H$ is compactly supported near the left lower corner of the front face. 
Then, using the projective coordinates \eqref{right-coord}, where the roles of $x$ and 
$\w$ are interchanged (e.g. $s= \wx / x$), we obtain for some 
constant $C>0$ and any $K \in \R$
\begin{align*}
\left| \int_0^t \int_M H(t-\wt, p, \widetilde{p}) u(\widetilde{p}) d\wt \dv_g(\widetilde{p}) \right|
&\leq C \, x^{-N+2}\int_0^1 s^{\, -N+n} ds, \\
\left| \int_M H(t, p, \widetilde{p}) u(\widetilde{p}) \dv_g(\widetilde{p}) \right|
&\leq C \, x^{-N} \left(\frac{t}{\ x^2} \right)^{-K} \int_0^1 s^{\, -N+n} ds
\\ &\leq C \, t^{-K}  \, x^{\, -N +2K}.
\end{align*}

\emph{Estimates near the top corner of the front face.} 
Assume that $\beta^* H$ is compactly supported near the top corner of the front face. 
Then, using the projective coordinates \eqref{top-coord} we obtain for some 
constant $C>0$
\begin{align*}
\left| \int_0^t \int_M H(t-\wt, p, \widetilde{p}) u(\widetilde{p}) d\wt \dv_g(\widetilde{p}) \right|
&\leq C \int_{x}^1 \rho^{\, -N+1} d \rho \, \int_0^1 \widetilde{\xi}^{-N+n} d\widetilde{\xi}, \\
\left| \int_M H(t, p, \widetilde{p}) u(\widetilde{p}) \dv_g(\widetilde{p}) \right|
&\leq C \, \rho^{\, -N} \int_0^1 \widetilde{\xi}^{-N+n} d\widetilde{\xi}
\\ &= C \, t^{\, -\frac{N}{2}} \int_0^1 \widetilde{\xi}^{-N+n} d\widetilde{\xi}
\end{align*}

\emph{Estimates near the temporal diagonal.} 
Assume that $\beta^* H$ is compactly supported near td. 
Then, using the projective coordinates \eqref{d-coord} we obtain for some 
constant $C>0$ and any $K \in \R$
\begin{align*}
\left| \int_0^t \int_M H(t-\wt, p, \widetilde{p}) u(\widetilde{p}) d\wt \dv_g(\widetilde{p}) \right|
&\leq C x^{-N+2} \int \eta d\eta  \int G \, dS \, dZ, \\
\left| \int_M H(t, p, \widetilde{p}) u(\widetilde{p}) \dv_g(\widetilde{p}) \right|
&\leq C \, x^{-N} \int \eta^{-1} G dS dZ 
\\ &= C \, t^{-K} x^{-N+2K} \int \eta^{-1+2K} G \, dS \, dZ. 
\end{align*}

From here the statement follows by evaluating the corresponding 
integrals for the different values of $N$.

\end{proof}

\section{Improved regularity of entropy minimizers} \ \medskip

We continue in the setting of a Riemannian manifold $(M,g)$ with isolated conical singularities in the 
sense of Definition \ref{cone-metric}, with the corresponding Laplace Beltrami operator $\Delta$
and the Schr\"odinger operator $L=\Delta + q \cdot \scal(g)$. In case of $q\neq 0$
we also impose on $g$ the admissibility assumption in Definition \ref{admissible}.
We can now employ the mapping properties of the fundamental solution $H$
together with essential self-adjointness of $L$ in order to derive a better asymptotics
for the minimizers of the various entropies in \S \ref{DW-results}.

\begin{prop}\label{better-asymptotics}
Consider a Riemannian manifold $(M,g)$ with isolated conical singularities and $q\in \R$. In case of $q\neq 0$
we assume that $g$ is admissible in the sense of Definition \ref{admissible}. Assume $n=\dim F \geq 3$. 
Consider $\w \in \mathscr{O}(x^{-N}) \cap L^2(M)$ for some $N \leq n/2$, such that 
$(\Delta + q \cdot \scal(g)) \w = F(\w)$ for some functional $F(\w)$ with $|F(\w)| \leq C |\log (\w) \cdot \w|$ 
for some $C>0$. Then $\w \in \mathscr{O}(1)$ admits a partial asymptotic expansion
near the conical singularity
$$\w(x,z) = \textup{const} + O(x^{\overline{\gamma}}), \ \textup{as} \ x\to 0.$$
This expansion is preserved under differentiation by $\V_b$ of any order. Here, $\overline{\gamma}>0$ is given in terms of $\lambda_1$, the smallest nonzero eigenvalue of $\Delta_F$, and $\gamma$ in \eqref{higher-order} by
$$
\overline{\gamma} := \min \left\{\gamma,  
\mu(\lambda_1)= -\frac{n-1}{2} + \sqrt{\lambda_1 + \left(\frac{n-1}{2}\right)^2} \right\}.
$$
\end{prop}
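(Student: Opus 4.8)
The plan is to exploit essential self-adjointness of $L = \Delta + q\cdot\scal(g)$ on $H^2_2(M)$ from Corollary \ref{minimal-domain-2}, together with the inhomogeneous mapping properties of the heat operator from Theorem \ref{mapping-theorem} and its left-face asymptotics from Theorem \ref{heat-asymptotics}, to run a bootstrap on the growth order $N$. Since $\w$ solves the static equation $L\w = F(\w)$ and $L$ generates the heat semigroup $H(t)=e^{-tL}$, Duhamel's principle yields, for every $t>0$, the representation
\begin{equation*}
\w = H(t)\w + H(F(\w)), \qquad H(F(\w)) = \int_0^t H(s)F(\w)\,ds,
\end{equation*}
where the second term is the convolution heat operator applied to the time-independent right-hand side. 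First I would justify this identity in $L^2(M)$, using $\w\in\dom(L)=H^2_2(M)$ and $F(\w)\in L^2$, so that both terms fall under the scope of Theorem \ref{mapping-theorem}.

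The second step converts the pointwise bound $|F(\w)|\le C|\log(\w)\cdot\w|$ into a statement in the scale $\mathscr{O}(x^{-\bullet})$: if $\w\in\mathscr{O}(x^{-N})$, the logarithm is absorbed into an arbitrarily small power, so $F(\w)\in\mathscr{O}(x^{-N-\varepsilon})$ for every $\varepsilon>0$. Feeding this into the convolution mapping property improves the order by almost two, $H(F(\w))\in\mathscr{O}^2_e(x^{-N+2-\varepsilon})$, with the logarithmic and bounded cases when $N+\varepsilon\le 2$, and crucially this bound is uniform in $t$. The homogeneous term carries the decay factor in the target $t^{-N/2-\varepsilon}\mathscr{O}^2_e(1)$: while $N>0$, letting $t\to\infty$ forces $H(t)\w\to 0$ uniformly, and passing to the limit in the uniform bound for $H(F(\w))$ gives $\w\in\mathscr{O}(x^{-N+2-\varepsilon})$. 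Iterating finitely many times from $N\le n/2$ lowers the order below $2$ and then to a nonpositive value, so that $\w\in\mathscr{O}^2_e(1)$ is bounded.

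Once $\w$ is bounded (or decaying), the refined expansion is read off from the left-face asymptotics $\beta^*H = N_{\textup{lf}}(H)+O(\rho_{\textup{lf}}^{\overline{\gamma}})$ applied to both terms of the Duhamel identity. Because the smallest element of the left-face index set $E$ in \eqref{E-index-set} comes from the zero eigenvalue of $\Delta_F$ and equals $\mu(0)=0$, the leading contribution is the $x^0$-term; being built from the constant eigenfunction on $F$, it is genuinely $z$-independent, hence a constant. The remainder is governed by the next index $\mu(\lambda_1)$ together with the conormal bound $\gamma$ inherited from $\beta^*R$, giving $O(x^{\overline{\gamma}})$ with $\overline{\gamma}=\min\{\gamma,\mu(\lambda_1)\}$, so that $\w(x,z)=\textup{const}+O(x^{\overline{\gamma}})$. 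Stability under $\mathcal{V}_b$-differentiation of arbitrary order then follows by upgrading the two orders of $\mathcal{V}_b$-regularity encoded in $\mathscr{O}^2_e$ through a further induction, differentiating the equation and reapplying the smoothing mapping properties, using that the admissibility hypotheses of Definition \ref{admissible} hold for all $k$; since $\mathcal{V}_b$ preserves powers of $x$, the rate $\overline{\gamma}$ is retained.

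The main obstacle I expect is the treatment of the homogeneous part $H(t)\w$ and the clean passage to the limit. While $N>0$ the decay factor $t^{-N/2-\varepsilon}$ does the job, but one must ensure the $\varepsilon$-losses from the logarithm do not accumulate over the finitely many bootstrap steps, and in the terminal regime $N\le 0$, where $H(t)\w$ no longer decays, the expansion must be extracted directly from the left-face structure of the homogeneous kernel rather than from a limit. Verifying that the leading left-face term is constant in $z$ and identifying the optimal remainder rate as exactly $\overline{\gamma}=\min\{\gamma,\mu(\lambda_1)\}$, rather than a worse one, is the delicate point where the explicit index set $E$ and the conormal bound on $R$ must be combined with care.
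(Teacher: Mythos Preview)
Your overall strategy matches the paper's: represent $\w$ via Duhamel, invoke Theorem \ref{mapping-theorem} to bootstrap the growth order, and read off the final expansion from the left-face asymptotics of Theorem \ref{heat-asymptotics}. Two points of divergence are worth noting.

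First, the paper does \emph{not} send $t\to\infty$. It fixes an arbitrary $t_0>0$ and uses directly that the homogeneous term satisfies $H(t_0)\w\in t_0^{-N/2-\varepsilon}\mathscr{O}^2_e(1)\subset\mathscr{O}^2_e(1)$, which is already bounded; hence $\w=u(t_0)=(H*\widetilde F(\w))(t_0)+H(t_0)\w$ inherits the improved order from the convolution term alone. Your limit $t\to\infty$ is both unnecessary and unjustified: the estimates in Theorem \ref{mapping-theorem} are obtained on the blown-up heat space near the front face (i.e.\ for bounded $t$), so neither the decay $H(t)\w\to0$ nor the uniformity of the convolution bound as $t\to\infty$ follows from them. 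The obstacle you flag at the end is real, and the resolution is simply to stay at fixed $t_0$.

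Second, the paper uses $H=e^{-t\Delta}$ rather than $e^{-tL}$, moving the potential into the right-hand side via $\widetilde F(\w):=F(\w)-q\,\scal(g)\,\w$. The payoff is the clean uniqueness argument: one checks that $u(t)$ and the constant-in-time function $\w$ both solve $(\partial_t+\Delta)v=\widetilde F(\w)$ with $v(0)=\w$, and then $\partial_t\|\w-u(t)\|_{L^2}^2=-\|\nabla(\w-u(t))\|_{L^2}^2\le0$ together with continuity at $t=0$ forces $u(t)\equiv\w$. Your appeal to abstract semigroup theory for $e^{-tL}$ is more delicate because $q\,\scal(g)$ need not be bounded below (it is only $O(x^{-2+\gamma})$), so $L$ is not obviously semibounded, and the identification of the constructed fundamental solution with the functional-calculus semigroup is not automatic (cf.\ the Remark after Theorem \ref{heat-asymptotics}). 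The paper's reduction to the pure Laplacian avoids this entirely.
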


\begin{proof}
Let us write $L:= \Delta + q \cdot \scal(g)$. Recall from Corollary \ref{minimal-domain-2} that the maximal 
domain $\dom_{\max}(L) = H^2_2(M)$ is the domain of the unique self-adjoint extension of $L$. 
By definition of the maximal domain, $\w \in \dom_{\max}(L)$, since $\w, L \w \in L^2(M)$. Consequently
$\w \in H^2_2(M)= \dom_{\max}(\Delta)$ lies in the domain of the unique self-adjoint
extension of $\Delta$. We can construct another solution to $L u = F(\w)$ using the fundamental solution $H=e^{-t\Delta}$
of $\Delta$ as follows
\begin{equation}\label{H-w-representation}
\begin{split}
\widetilde{F}(\w) &:= F(\w)-q\cdot \scal(g)\w, \\
u(t) &:= \int_0^t \int_M H(t-\wt, p, \widetilde{p}) F(\w)(\widetilde{p}) d\wt \dv(\widetilde{p})
\\ &+ \int_M H(t, p, \widetilde{p}) \w(\widetilde{p}) \dv(\widetilde{p}) 
=: (H * \widetilde{F}(\w))(t) + H(t) \w. 
\end{split}
\end{equation}
By construction $(\w - u)$ solves the initial value problem
$$
(\partial_t + \Delta) (\w - u) = 0, \ (\w - u)(0)=0.
$$
By Theorem \ref{mapping-theorem}, $u(t) \in \mathscr{O}(x^{-N} \log(x)) \subset L^2(M)$, 
uniformly in $t\geq 0$ and hence $\| u(t)\|_{L^2(M)}$ is continuous up to $t=0$, since $N \leq n/2$. 
Similar estimates as in Theorem \ref{mapping-theorem} yield
$\Delta u(t_0) \in \mathscr{O}(x^{-N}) \subset L^2(M)$ for any $t_0>0$ and hence 
$u(t_0)  \in \dom_{\max}(\Delta)$. Hence $(\w-u(t_0)) \in \dom_{\max}(L)$ 
lies in the domain of the unique self-adjoint extension of $\Delta$ and we may compute using integration by parts 
\begin{align*}
\partial_t \| \w-u(t) \|^2_{L^2(M)} = - (\Delta (\w-u(t)), (\w-u))_{L^2(M)} 
= - \| \nabla (\w-u(t))\|^2_{L^2(M)} \leq 0.
\end{align*}
Consequently, $\| \w-u(t) \|^2_{L^2(M)}$ is monotonically decreasing in $t$.
Since $u(0)=\w$ and by continuity of $\| u(t)\|_{L^2(M)}$ up to $t=0$, 
we conclude $\| \w-u(t) \|_{L^2(M)} \equiv 0$. Hence $u(t) = \w$ for any $t\geq 0$.
\medskip

Now for any fixed $t_0>0$ we can obtain an improved asymptotics for $u(t_0)$
and hence also for $\w$. Note that $\scal(g) \w \in \mathscr{O}(x^{-N-2+\gamma})$. 
For any $\varepsilon>0$ we find $F(\w) \in \mathscr{O}(x^{-N}\log(x)) \subset \mathscr{O}(x^{-N-\varepsilon})$. We write
$\gamma':= \min \{\gamma, 2+\varepsilon\}$. By Theorem \ref{mapping-theorem} we conclude that 
for each fixed $t_0>0$
\begin{equation*}
u(t_0) \in \left\{
\begin{split} 
&\mathscr{O}^2_e(x^{-N+\gamma'}), \, \, \textup{if} \, N> \gamma', \\
&\mathscr{O}^2_e(\log (x)), \ \textup{if} \, N= \gamma', \\
&\mathscr{O}^2_e(1), \quad \quad \ \, \textup{if} \, N< \gamma'.
\end{split}\right. \end{equation*} 
Argueing iteratively, we may improve the asymptotics of $\w$ step by step and 
conclude that $\w = \mathscr{O}^2_e(1)$. Due to the
expansion \eqref{lf-rf-asymptotics} we conclude that $\w$ admits a partial 
asymptotics of the form
$$\w(x,z) = \w(0,z) + O(x^{\overline{\gamma}}), \ \textup{as} \ x\to 0,$$
where $\w(0,z)$ lies in the kernel of $\Delta_F$ and hence is in fact constant in $z$.
Stability of the asymptotic expansion under differentiation by $\V_b$ follows from the 
representation \eqref{H-w-representation} and conormality of the fundamental solution kernel $H$. 
\end{proof}

We conclude the section with an obvious consequence of Proposition \ref{better-asymptotics}
and Theorems \ref{DW-1}, \ref{DW-2}, \ref{DW-3} where we now write
$\Delta \equiv \Delta_g$ in order to indicate dependence on the conical 
metric $g$.

\begin{cor}\label{asymptotics_minimizers}
Consider a Riemannian manifold $(M,g)$ with isolated conical singularities and $q\in \R$. In case of $q\neq 0$
we assume that $g$ is admissible in the sense of Definition \ref{admissible}. Assume $n=\dim F \geq 3$. 
Consider a minimizer $\w_g$ in the definition of the $\lambda$-functional, shrinker or the expander entropy. 
Then $\w_g \in \dom_{\max}(\Delta_g)$ lies in the domain of the unique self-adjoint extension of $\Delta_g$
and moreover admits for any $k \in \N$ a partial asymptotic expansion
\begin{equation*}\begin{split}
&\w_g(x,z) = \textup{const} + O(x^{\overline{\gamma}}), \\ 
&|\nabla_g^k\w_g|_g(x,z)=O(x^{\overline{\gamma}-k}). 
\end{split} \qquad  \textup{as} \ x\to 0. \end{equation*}
\end{cor}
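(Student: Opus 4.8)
The plan is to show that every entropy minimizer $\w_g$ satisfies the hypotheses of Proposition \ref{better-asymptotics} with $q=\tfrac14$, and then to read off both the scalar expansion and the derivative bounds directly from its conclusion. The work is thus purely one of verifying structural hypotheses; the analytic substance lives in Proposition \ref{better-asymptotics}.

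First I would recast the three Euler--Lagrange equations of Theorems \ref{DW-1}, \ref{DW-2} and \ref{DW-3} into the common form $(\Delta_g + \tfrac14 \scal(g))\w_g = F(\w_g)$. Dividing the $\lambda$-equation $4\Delta_g\w_g+\scal(g)\w_g=\lambda(g)\w_g$ by four gives $q=\tfrac14$ and the linear right-hand side $F(\w_g)=\tfrac14\lambda(g)\w_g$. The same manipulation applied to the shrinker and expander equations again produces $q=\tfrac14$, now with $F(\w_g)=\tfrac{1}{4\tau}\bigl(\pm 2\log(\w_g)\,\w_g + c\,\w_g\bigr)$ for a constant $c$ depending on $m$ and $\nu_\pm$. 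In the latter two cases $|F(\w_g)|\le C|\log(\w_g)\cdot\w_g|$ holds as required, while the linear $\lambda$-case is of an even more favourable type and is covered by the same argument, since $\tfrac14\lambda(g)\w_g$ lies in $\mathscr{O}(x^{-N})\subset\mathscr{O}(x^{-N-\varepsilon})$, exactly where the proof of Proposition \ref{better-asymptotics} places $F(\w)$. As $q=\tfrac14\neq 0$ in every case, the admissibility hypothesis on $g$ is active, and Corollary \ref{minimal-domain-2} gives $\dom_{\max}(\Delta_g)=\dom_{\max}(L)=H^2_2(M)$, so that $\w_g$ indeed lies in the domain of the unique self-adjoint extension of $\Delta_g$.

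Next I would check the membership $\w_g\in\mathscr{O}(x^{-N})\cap L^2(M)$ with $N\le n/2$. The $L^2$-bound and interior smoothness of $\w_g$ are part of the Dai--Wang statements and supply the condition $\w_g\cdot(1-\phi_\varepsilon)\in H^2_0(M)$ away from the singularity. Near the cone, the asymptotics $\w_g=o(x^{-(m-2)/2})$ for the $\lambda$-functional, and $\w_g=o(x^{-(m-2)/2-\varepsilon})$ for the shrinker and expander, place $\w_g$ in $\mathscr{O}(x^{-N})$ with $N=\tfrac{n-1}{2}$, respectively $N=\tfrac{n-1}{2}+\varepsilon$ (recall $m-2=n-1$). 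Since $\tfrac{n-1}{2}<\tfrac{n}{2}$, a choice $\varepsilon<\tfrac12$ secures $N\le n/2$ and Proposition \ref{better-asymptotics} applies. Its conclusion yields at once $\w_g\in\mathscr{O}(1)$, the partial expansion $\w_g(x,z)=\textup{const}+O(x^{\overline{\gamma}})$, the stability of this expansion under $\V_b$-differentiation of all orders, and the stated value of $\overline{\gamma}$ in terms of the smallest nonzero eigenvalue $\lambda_1$ of $\Delta_F$ and the weight $\gamma$.

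The only remaining task is to pass from the $\V_b$-stable expansion to the Levi--Civita bound $|\nabla_g^k\w_g|_g(x,z)=O(x^{\overline{\gamma}-k})$. Each Levi--Civita derivative, measured in the $g$-norm, differs near the cone from a b-derivative by a factor of $x^{-1}$, since $\partial_x=x^{-1}(x\partial_x)$ and the $g$-orthonormal frame in the $F$-directions is $x^{-1}$ times a $g_F$-orthonormal frame, each of these being a b-vector field. Expanding $\nabla_g^k$ as a sum of compositions of $k$ b-vector fields with coefficients of size $O(x^{-k})$, exactly as in the proof of Lemma \ref{norm-equivalence}, converts the bound $\nabla_{\V_b}^k\w_g=O(x^{\overline{\gamma}})$ into $|\nabla_g^k\w_g|_g=O(x^{\overline{\gamma}-k})$. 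This is routine bookkeeping, and I expect no genuine obstacle: the corollary is a direct application of Proposition \ref{better-asymptotics}, the only points requiring care being the identification $q=\tfrac14$ and the verification that each minimizer lies in the correct weighted space $\mathscr{O}(x^{-N})$ with $N\le n/2$.
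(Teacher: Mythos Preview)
Your proposal is correct and follows precisely the approach the paper intends: the paper gives no explicit proof of the corollary, calling it an ``obvious consequence of Proposition \ref{better-asymptotics} and Theorems \ref{DW-1}, \ref{DW-2}, \ref{DW-3}'', and your argument supplies exactly those details --- identifying $q=\tfrac14$, verifying the $\mathscr{O}(x^{-N})$ membership with $N\le n/2$ from the Dai--Wang asymptotics, and converting the $\V_b$-stable expansion into the Levi--Civita bound via the $x^{-1}$ rescaling. Your care in noting that the linear right-hand side in the $\lambda$-case (and the linear part in the shrinker/expander cases) does not literally satisfy $|F(\w)|\le C|\log(\w)\cdot\w|$ but is handled by the proof of Proposition \ref{better-asymptotics} since only $F(\w)\in\mathscr{O}(x^{-N-\varepsilon})$ is used, is a valid and appropriate observation.
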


This proves our first main result, stated in Theorem \ref{asymptotics_minimizers2}. In the next three sections, we use this result as an essential analytic tool to prove the Theorems \ref{steadysolitonricciflat2} and Theorem \ref{monotonicity} of this paper.
%

\section{Perelman's $\lambda$-functional and steady Ricci solitons} \medskip
As in the smooth setting (see e.g. \cite{CHI}) one shows that the first variation of $\lambda$\index{Lambda@$\lambda$-functional} is given by
\begin{align}\label{lambdafunctional}\lambda(g)'(h)=-\int_M\langle h,\ric (g)+\nabla^2 f_g\rangle_g e^{-f_g}\dv_g,
\end{align}
where $f_g=-2\log(\w_g)$\index{$f_g$, minimizer realizing $\lambda(g)$} is the minimizer\index{minimizer} realizing $\lambda(g)$ and $h\in C^\infty_0(M,S)$ is a symmetric $2$-tensor supported away from the singularity.
The Euler Lagrange equation for $f_g$ is
\begin{align}\label{eulerlagrangelambda}
-2\Delta_g f-|\nabla f_g|^2+\scal (g)=\lambda(g),
\end{align}
where $\Delta_g$ denotes the Laplace Beltrami operator of the metric $g$.
\begin{lem}\label{lambda-diffeo-invariance}
	Let $(M^m,g)$ be a compact manifold with an isolated conical singularity and suppose that $\scal (g_F)=(n-1)n$, where $n=m-1=\dimn(F)$. Then,
	\begin{enumerate}
		\item[(i)] if $m\geq5$, $\lambda(g)'(\mathcal{L}_Xg)=0$ for all vector fields such that $\mathcal{L}_Xg\in C^{1,\alpha}_{\textup{ie}}(M,S)_{-2}$.
		\item[(ii)] assertion (i) holds for $m=4$, provided that at least one of the following conditions is satisfied for some $\epsilon > 0$
		\begin{itemize}
			\item[a)] $\w_g(x)\to0 $ as $x\to 0$,
			\item[b)] $|\ric (g)|=O(x^{-2+\epsilon})$ as $x\to 0$ and a) does not hold,
						\item[c)] $\mathcal{L}_Xg\in C^{1,\alpha}_{\textup{ie}}(M,TM)_{-2+\epsilon}$ for some $\epsilon>0$.
			\end{itemize}
	\end{enumerate}
\end{lem}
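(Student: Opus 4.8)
The plan is to realise $\lambda(g)'(\mathcal{L}_Xg)$ as a limit of integrals over the smooth part of $M$, on which the infinitesimal diffeomorphism invariance of $\lambda$ kills the bulk contribution, and then to control the resulting collar remainder with the sharp minimizer asymptotics of Corollary \ref{asymptotics_minimizers}. Write $T:=\ric(g)+\nabla^2 f_g$, so that by \eqref{lambdafunctional} the object to analyse is $\lambda(g)'(\mathcal{L}_Xg)=-\int_M\langle\mathcal{L}_Xg,T\rangle_g\,e^{-f_g}\dv_g$. The algebraic heart of the matter is the weighted Bianchi identity $\nabla^i\big(e^{-f_g}T_{ij}\big)=0$: combining the contracted second Bianchi identity with the Euler--Lagrange equation \eqref{eulerlagrangelambda} one finds $\nabla^i(e^{-f_g}T_{ij})=\tfrac12 e^{-f_g}\nabla_j\big(\scal(g)-2\Delta_g f_g-|\nabla f_g|^2\big)$, and the bracket is the constant $\lambda(g)$, so its gradient vanishes. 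Consequently, for any vector field $Y$ compactly supported in the open manifold $M$, integration by parts produces no boundary term and gives $\int_M\langle\mathcal{L}_Yg,T\rangle_g\,e^{-f_g}\dv_g=0$; this is the concrete form of the invariance of $\lambda$ under the flow of $Y$.

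First I would introduce a cutoff $\chi_\varepsilon$ with $\chi_\varepsilon\equiv0$ on $\mathscr{C}_\varepsilon(F)$ and $\chi_\varepsilon\equiv1$ off $\mathscr{C}_{2\varepsilon}(F)$ and apply the previous identity to the admissible field $Y:=\chi_\varepsilon X$. Since $\mathcal{L}_{\chi_\varepsilon X}g=\chi_\varepsilon\,\mathcal{L}_Xg+B_\varepsilon$ with $B_\varepsilon:=d\chi_\varepsilon\otimes X^\flat+X^\flat\otimes d\chi_\varepsilon$, this yields the exact identity $\int_M\chi_\varepsilon\langle\mathcal{L}_Xg,T\rangle_g\,e^{-f_g}\dv_g=-\int_M\langle B_\varepsilon,T\rangle_g\,e^{-f_g}\dv_g$. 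It then remains to let $\varepsilon\to0$: the left-hand side should converge to $\int_M\langle\mathcal{L}_Xg,T\rangle_g\,e^{-f_g}\dv_g$ once this integrand is seen to be absolutely integrable, while the right-hand side is supported in the thin collar $\mathscr{C}_{2\varepsilon}(F)\setminus\mathscr{C}_\varepsilon(F)$ and must be shown to tend to $0$.

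To run both limits I would assemble the collar asymptotics. By Corollary \ref{asymptotics_minimizers} the minimizer satisfies $\w_g\to\mathrm{const}>0$ and $|\nabla_g^k\w_g|_g=O(x^{\overline{\gamma}-k})$, so with $f_g=-2\log\w_g$ the product $e^{-f_g}T=\w_g^2\,\ric(g)-2\w_g\nabla^2\w_g+2\,d\w_g\otimes d\w_g$ is $O(x^{-2})$ — note that the potentially singular factors $1/\w_g$ in $\nabla^2 f_g$ are cancelled by $e^{-f_g}=\w_g^2$ before estimating. The conical structure with $\scal(g_F)=n(n-1)$ gives $\scal(g)=O(x^{-2+\gamma})$ but only $|\ric(g)|_g=O(x^{-2})$ in general, while the hypothesis $\mathcal{L}_Xg\in\mathcal{C}^{1,\alpha}_{\textup{ie}}(M,S)_{-2}$ forces $|X|_g=O(x^{-1})$ by reading off the leading collar behaviour of $\mathcal{L}_Xg$ (Killing fields give $\mathcal{L}_Xg=0$ and are harmless). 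Since $|d\chi_\varepsilon|_g=O(\varepsilon^{-1})$ on the collar and $\dv_g\sim x^n\,dx\,dV_{g_F}$, the main integrand is $O(x^{n-4})$ near $x=0$ (integrable precisely when $n>3$) and the cutoff term is bounded by $\varepsilon^{-1}\int_\varepsilon^{2\varepsilon}x^{n-3}\,dx=O(\varepsilon^{\,n-3})$.

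This dimensional balance is the main obstacle. For $n\geq4$, i.e. $m\geq5$, one has $n-3\geq1$, so the main integral converges and the cutoff term is $O(\varepsilon^{\,n-3})\to0$, which proves (i). For $n=3$, i.e. $m=4$, the exponent is borderline ($\varepsilon^0$) and the argument gains nothing; this is exactly why (ii) must purchase an extra power $x^{\epsilon'}$ from one of the three factors of the integrand. Condition (a) replaces the weight $e^{-f_g}=\w_g^2$ by a genuinely vanishing $O(x^{2\delta})$ when $\w_g\to0$ (here one must additionally record the asymptotics of $\w_g$ along the second indicial root); condition (b) upgrades $|\ric(g)|_g$ to $O(x^{-2+\epsilon})$; and condition (c) upgrades $|X|_g$ to $O(x^{-1+\epsilon})$. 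In each case the governing quantities become $O(x^{n-4+\epsilon'})$ and $O(\varepsilon^{\,n-3+\epsilon'})$ with $\epsilon'>0$, and the proof closes as for $m\geq5$. The most delicate points to verify carefully are the passage from the weight of $\mathcal{L}_Xg$ to the pointwise bound on $|X|_g$, and, in case (a), that the $1/\w_g$ singularities are genuinely absorbed by $e^{-f_g}$ so that $e^{-f_g}T$ stays controlled even as $f_g\to+\infty$.
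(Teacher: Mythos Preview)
Your approach is genuinely different from the paper's and the core idea is sound, but the step you yourself flag as ``most delicate'' is a real gap. You assert that $\mathcal{L}_Xg\in\mathcal{C}^{1,\alpha}_{\textup{ie}}(M,S)_{-2}$ forces $|X|_g=O(x^{-1})$ ``by reading off the leading collar behaviour''. This does not follow directly: $\mathcal{L}_Xg$ only determines $X$ modulo Killing fields, and nothing a priori prevents $X$ itself from blowing up faster than $x^{-1}$ even when $\mathcal{L}_Xg$ is $O(x^{-2})$. Without that bound your collar term $\int_M\langle B_\varepsilon,T\rangle_g e^{-f_g}\dv_g$ is uncontrolled and the argument stalls.

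The paper handles exactly this point, and it is in fact the bulk of their proof. They show via a Fredholm argument for $P^*P=\Delta_g-\ric_g$ on one-forms that the image of $P:Y\mapsto\tfrac12\mathcal{L}_Yg$ from $\mathcal{C}^{2,\alpha}_{\textup{ie}}(M,TM)_{-1-\delta}$ to $\mathcal{C}^{1,\alpha}_{\textup{ie}}(M,S)_{-2-\delta}$ is closed and equals the $L^2$-orthogonal complement of divergence-free tensors; a density argument then places $\mathcal{L}_Xg$ in this image, allowing one to \emph{replace} $X$ by a vector field in $\mathcal{C}^{2,\alpha}_{\textup{ie}}(M,TM)_{-1-\delta}$ with the same Lie derivative. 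Once this is done, the paper approximates $X$ by compactly supported $X_i$ and invokes diffeomorphism invariance of $\lambda$ directly, using a global H\"older estimate of the form $|\lambda(g)'(h)|\leq C\|h\|_{\alpha,-2-\delta}\|\ric(g)+\nabla^2 f_g\|_{\alpha,-2}\|e^{-f_g}\|_{\alpha,0}$ to pass to the limit.

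Your route via the weighted Bianchi identity $\nabla^i(e^{-f_g}T_{ij})=0$ and a cutoff is more elementary and avoids appealing to the global diffeomorphism invariance of $\lambda$; it would be a clean alternative once the $|X|_g$ bound is supplied. But you cannot bypass the Fredholm step: either you import the paper's argument to upgrade the representative of $X$, or you must give an independent proof that $\mathcal{L}_Xg=O(x^{-2})$ implies (after adjusting by a Killing field) $|X|_g=O(x^{-1})$. The case analysis in (ii) is otherwise correct and matches the paper's, with the same mechanism of buying an extra power $x^{\epsilon'}$ from one of the three factors.
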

\begin{proof}Let us start with the proof of (i).
Let $m\geq 5$, $X$ be a vector field such that $\mathcal{L}_Xg\in C^{1,\alpha}_{\textup{ie}}(M,S)_{-2}$.
We first show that there exists a $\delta>0$ such that we can choose $X\in C^{2,\alpha}_{\textup{ie}}(M,TM)_{-1-\delta}$.
 Consider the map $P:Y\mapsto \frac{1}{2}\mathcal{L}_Yg$.
Its formal adjoint $P^*$ is given by $-\mathrm{div}$ and it is easily checked that $P^*P=\Delta_g-\mathrm{Ric}_g$, where in this case, $\Delta_g$ and $\ric_g$ denote the connection Laplacian and the Ricci operator acting on $1$-forms, respectively.
  Choose $\delta\geq0$ small such that $1+\delta$ is a nonexceptional value of $P^*P$, i.e.
\begin{align}
P^*P:  C^{2,\alpha}_{\textup{ie}}(M,TM)_{-1-\delta}\to C^{1,\alpha}_{\textup{ie}}(M,TM)_{-3-\delta}
\end{align}  
 is Fredholm. Then it is standard to show that the operator
\begin{align}\label{operatorP}
P:  C^{2,\alpha}_{\textup{ie}}(M,TM)_{-1-\delta}\to C^{1,\alpha}_{\textup{ie}}(M,S)_{-2-\delta}
\end{align}
has a closed image which is given by the $L^2$-orthogonal complement of
\begin{align}\label{divker}\ker \left\{\mathrm{div}: C^{1,\alpha}_{\textup{ie}}(M,S)_{-2-\delta}\to  C^{0,\alpha}_{\textup{ie}}(M,TM)_{-3-\delta}\right\}.
\end{align}
Now we claim that
\begin{align*}\mathcal{L}_Xg\in C^{1,\alpha}_{\textup{ie}}(M,S)_{-2}\subset C^{1,\alpha}_{\textup{ie}}(M,S)_{-2-\delta}
\end{align*} is also orthogonal to \eqref{divker}
althogh we a priori do not know anything about the behaviour of $X$ at the conical singularity. We have the $L^2$-orthogonal decompositions
\begin{align*}
C^{\infty}_0(M,S)&=P(C^{\infty}_0(M,TM))\oplus \ker\mathrm{div}\cap C^{\infty}_0(M,S),\\
C^{1,\alpha}_{\textup{ie}}(M,S)_{-2-\delta}&=P(C^{2,\alpha}_{\textup{ie}}(M,TM)_{-1-\delta})\oplus\ker\mathrm{div}\cap C^{1,\alpha}_{\textup{ie}}(M,S)_{-2-\delta},
\end{align*}
where $C^{\infty}_0$ denotes sections which are supported away from the singularity. Because $C^{\infty}_0(M,S)$ and $P(C^{\infty}_0(M,TM))$ are dense in $C^{1,\alpha}_{\textup{ie}}(M,S)_{-2-\delta}$ and the closed subspace $P(C^{2,\alpha}_{\textup{ie}}(M,TM)_{-1-\delta})$, respectively, $\ker\mathrm{div}\cap C^{\infty}_0(M,S)$ is also dense in $\ker\mathrm{div}\cap C^{1,\alpha}_{\textup{ie}}(M,S)_{-2-\delta}$. 
Due to integration by parts, $\mathcal{L}_Xg$ is orthogonal to the space $ \ker{\mathrm{div}}\cap C^{\infty}_0(M,S)$. By continous embedding $C^{1,\alpha}_{\textup{ie}}(M,S)_{-2-\delta}\subset L^2$ and by denseness, $\mathcal{L}_Xg$ is also orthogonal to $\ker\mathrm{div}\cap C^{1,\alpha}_{\textup{ie}}(M,S)_{-2-\delta}$ which proves the claim. 

Therefore,  $\mathcal{L}_Xg$ lies in the image of $P$ in \eqref{operatorP} and we may assume $X\in C^{2,\alpha}_{\textup{ie}}(M,TM)_{-1-\delta}$ from now on. Now for any  symmetric $2-$tensor $h\in C^\infty(\overline{M},S)$ we can estimate
on a small $\epsilon$ neighborhood $\mathscr{C}_\varepsilon(F)$ around the singular point, for some uniform $C>0$
\begin{align*}
\int\limits_{\mathscr{C}_\varepsilon(F)}\langle h , \ric (g)+\nabla^2 f_g\rangle_g e^{-f_g}\dv_g
\leq C\cdot \sup_{\mathscr{C}_\varepsilon(F)} \, (x^{2+\delta}|h|)\cdot ( x^{2}|\ric(g)+\nabla^2f_g|)\cdot|e^{-f_g}|,
\end{align*}
which holds because $4+\delta<5\leq m$. From here we conclude 
\begin{align}\label{integral_lambda_estimate}
\lambda(g)'(h)\leq C\left\|h\right\|_{\alpha,-2-\delta}\left\| \ric(g)+\nabla^2f_g\right\|_{\alpha,2}\left\|e^{-f_g}\right\|_{\alpha,0}
\end{align} 
and the right hand side is finite by Corollary \ref{asymptotics_minimizers} and $f_g=-2\log\w_g$. 
Now take a sequence of vector fields $X_i$ with support outside of a conical singularity such that $X_i\to X$ in the space $C^{2,\alpha}_{\textup{ie}}(M,TM)_{-1-\delta}$. Due to \eqref{integral_lambda_estimate} and dominated convergence, we have
$$\lim_{i\to\infty}\lambda(g)'(\mathcal{L}_{X_i}g)=\lambda(g)'(\mathcal{L}_{X}g),$$ because $\mathcal{L}_{X_i}g\to \mathcal{L}_{X}g$ in $C^{1,\alpha}_{\textup{ie}}(M,S)_{-2}$. Due to diffeomorphism invariance, we know that $\lambda(g)'(\mathcal{L}_{X_i}g)=0$ for all $i\in\N$ and therefore, the first statement (i) follows.

Let us now prove (ii) and assume $m=4$. As in case, let $X$ be a vector field such that $\mathcal{L}_Xg\in C^{1,\alpha}_{\textup{ie}}(M,S)_{-2}$. To show the existence of $\delta>0$ such that we can choose $X\in C^{2,\alpha}_{\textup{ie}}(M,TM)_{-1-\delta}$, we have to adapt the argument from (i) slightly because $C^{1,\alpha}_{\textup{ie}}(M,S)_{-2}$ cannot be embedded to $L^2(M,S)$. In this case, it follows from $L^2$-pairing that the closed image of the operator
\begin{align}
P:  C^{2,\alpha}_{\textup{ie}}(M,TM)_{-1-\delta}\to C^{1,\alpha}_{\textup{ie}}(M,TM)_{-2-\delta}
\end{align}
is given for any $\epsilon>0$ by the $L^2$-orthogonal complement of
\begin{align}\label{divker2}\ker \left\{\mathrm{div}: C^{1,\alpha}_{\textup{ie}}(M,S)_{-2+\delta+\epsilon}\to  C^{0,\alpha}_{\textup{ie}}(M,TM)_{-3+\delta+\epsilon}\right\}.
\end{align}
 Because $\beta:=2-\delta-\epsilon$ satisfies $-\beta-2+\delta>-4$, the notion of orthogonal complement makes sense in this case. It is now shown as in (i) that $\div\ker\cap C^{\infty}_0(M,S)$ is dense in \eqref{divker2} from which we then conlude that we can choose $X$ as claimed. In case a), we have $e^{-f_g}=\w_g^2=O(x^{2\gamma'})$ by Corollary \ref{asymptotics_minimizers} and (provided that $\delta<2\gamma'$) we may replace \eqref{integral_lambda_estimate} by
\begin{align}\label{integral_lambda_estimate_a}
\lambda(g)'(h)\leq C\left\|h\right\|_{\alpha,-2-\delta}\left\| \ric(g)+\nabla^2f_g\right\|_{\alpha,-2}\left\|e^{-f_g}\right\|_{\alpha,+2\gamma'},
\end{align}
which still holds in dimension $4$. In case b), and provided that case a) does not hold, we use the inequality
\begin{align}\label{integral_lambda_estimate_b}
\lambda(g)'(h)\leq C\left\|h\right\|_{\alpha,-2-\delta}\left\| \ric(g)+\nabla^2f_g\right\|_{\alpha,-2+\epsilon}\left\|e^{-f_g}\right\|_{\alpha,0},
\end{align}
which holds as $m=4$ and $\delta<\epsilon$ and which we can use because because $\nabla^2f_g=O(x^{\gamma'-2})$ due to Corollary \ref{asymptotics_minimizers} if a) does not hold. In case c), we replace \eqref{integral_lambda_estimate} by
\begin{align}\label{integral_lambda_estimate_c}
\lambda(g)'(h)\leq C\left\|h\right\|_{\alpha,-2+\epsilon-\delta}\left\| \ric(g)+\nabla^2f_g\right\|_{\alpha,-2}\left\|e^{-f_g}\right\|_{\alpha,0},
\end{align}
which holds if $\delta<\epsilon$. Moeroever, if $1-\epsilon+\delta$ is nonexceptional, the operator
\begin{align*}
P:  C^{2,\alpha}_{\textup{ie}}(M,TM)_{-1+\epsilon-\delta}\to C^{0,\alpha}_{\textup{ie}}(M,S)_{-2+\epsilon-\delta}
\end{align*}
is Fredholm and we argue as above to conclude that we may choose $X\in C^{2,\alpha}_{\textup{ie}}(M,TM)_{-1+\epsilon-\delta} $. In all these cases, the assertion is shown as in part (i).
	\end{proof}

\begin{thm}\label{steadysolitonricciflat}
	Let $(M^m,g)$ be a compact manifold with an isolated conical singuarity and suppose that $\scal (g_F)=(n-1)n$, where $n=m-1=\dimn(F)$. Then,
\begin{itemize}	\item[(i)] if $m\geq5$, and $(M,g)$ is a steady Ricci soliton, it is Ricci flat.
	\item[(ii)] assertion (i) holds for $m=4$, provided that at least one of the following conditions is satisfied for some $\epsilon > 0$
	\begin{itemize}
		\item[a)] $\w_g(x)=O(x^{\epsilon}) $ as $x\to 0$,
		\item[b)] $|\ric (g)|=O(x^{-2+\epsilon})$ as $x\to 0$.
	\end{itemize}
\end{itemize}	
\end{thm}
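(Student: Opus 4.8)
The plan is to split the proof into two independent steps: first to show that the steady soliton is gradient, i.e. that $\ric(g)+\nabla^2 f_g=0$, where $f_g=-2\log\w_g$ realizes $\lambda(g)$; and then to show that a gradient steady soliton with an isolated conical singularity is Ricci flat. The only nontrivial inputs will be the first variation formula \eqref{lambdafunctional}, the diffeomorphism invariance of $\lambda$ encoded in Lemma \ref{lambda-diffeo-invariance}, the asymptotics of $\w_g$ from Corollary \ref{asymptotics_minimizers}, and the essential self-adjointness of $\Delta_g$ from Corollary \ref{minimal-domain-2}.

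For the first step I would rewrite the gradient of $\lambda$ as a Lie derivative. Using the steady equation $\ric(g)=-\mathcal{L}_Xg$ from \eqref{def_soliton} together with $\nabla^2 f_g=\tfrac12\mathcal{L}_{\nabla f_g}g$, one obtains $\ric(g)+\nabla^2 f_g=\mathcal{L}_Yg$ with $Y:=\tfrac12\nabla f_g-X$. I would then verify that this tensor lies in $C^{1,\alpha}_{\textup{ie}}(M,S)_{-2}$: the admissibility assumption in Definition \ref{admissible} places $\ric(g)$ in a weight $-2+\gamma$ space, while $f_g=-2\log\w_g$ together with the derivative bounds of Corollary \ref{asymptotics_minimizers} gives $\nabla^2 f_g=O(x^{\overline{\gamma}-2})$. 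Lemma \ref{lambda-diffeo-invariance} then yields $\lambda(g)'(\mathcal{L}_Yg)=0$, and substituting $h=\mathcal{L}_Yg=\ric(g)+\nabla^2 f_g$ into \eqref{lambdafunctional} gives
\begin{equation*}
0=\lambda(g)'(\ric(g)+\nabla^2 f_g)=-\int_M |\ric(g)+\nabla^2 f_g|_g^2\, e^{-f_g}\dv_g,
\end{equation*}
which forces $\ric(g)+\nabla^2 f_g=0$, so the soliton is gradient.

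For the second step I would trace the gradient identity to obtain $\Delta_g f_g=\scal(g)$ (positive sign convention), substitute this into the Euler--Lagrange equation \eqref{eulerlagrangelambda} to collapse it to the pointwise identity $\scal(g)+|\nabla f_g|^2=-\lambda(g)$, and then integrate $\scal(g)e^{-f_g}=(\Delta_g f_g)e^{-f_g}$ by parts against $e^{-f_g}=\w_g^2$. Using the normalization $\int_M\w_g^2\dv_g=1$ this forces $\lambda(g)=0$, hence $\scal(g)+|\nabla f_g|^2\equiv0$, and therefore $\Delta_g(\w_g^2)=-\w_g^2(\scal(g)+|\nabla f_g|^2)=0$. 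Since $\w_g^2-\mathrm{const}=O(x^{\overline{\gamma}})$ lies in $\dom_{\max}(\Delta_g)=H^2_2(M)$, essential self-adjointness (Corollary \ref{minimal-domain-2}) places it in the kernel of the unique self-adjoint Laplacian; as this kernel contains only constants, and $\w_g^2-\mathrm{const}\to0$, I conclude that $\w_g$ is constant and hence $\ric(g)=-\nabla^2 f_g=0$.

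Finally, the case distinction: part (i) uses Lemma \ref{lambda-diffeo-invariance}(i), which is available once $m\geq5$, whereas part (ii) invokes Lemma \ref{lambda-diffeo-invariance}(ii) in dimension four, where the decay hypotheses a) and b) are exactly what renders the first variation integral \eqref{integral_lambda_estimate} finite. I would remark that condition c) of the lemma need not be restated in the theorem, since for a soliton $\mathcal{L}_Xg=-\ric(g)$, so it coincides with b). The routine points are the trace identity, the two integrations by parts, and the harmonicity computation; these all go through once the boundary terms at the cone tip are seen to vanish, which holds because $\nabla f_g=O(x^{\overline{\gamma}-1})$ against a volume element of order $x^n$ with $n\geq3$. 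I expect the genuine obstacle to be precisely the four-dimensional situation: there $C^{1,\alpha}_{\textup{ie}}(M,S)_{-2}$ fails to embed into $L^2$, so establishing $\lambda(g)'(\mathcal{L}_Yg)=0$ is delicate; this difficulty is exactly what is isolated and resolved in Lemma \ref{lambda-diffeo-invariance}, which we are free to assume here.
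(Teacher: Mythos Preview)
Your first step (reducing to the gradient case via Lemma \ref{lambda-diffeo-invariance} applied to $Y=\tfrac12\nabla f_g-X$, then plugging $h=\ric(g)+\nabla^2 f_g$ into \eqref{lambdafunctional}) is exactly the paper's argument, including your observation that case c) of the lemma is redundant for solitons; the paper makes the same remark.

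Your second step, however, is genuinely different. The paper, after obtaining $-\Delta_g f_g-|\nabla f_g|^2=\lambda(g)$, multiplies by $f^\perp:=f_g-\int_M f_g\,e^{-f_g}\dv_g$ and integrates against the weighted measure $e^{-f_g}\dv_g$; one integration by parts (with the boundary term at the tip controlled by Corollary \ref{asymptotics_minimizers}) yields $\int_M|\nabla f^\perp|^2 e^{-f_g}\dv_g=0$, hence $f_g$ is constant. You instead first integrate $\Delta_g f_g=\scal(g)$ against $e^{-f_g}$ to deduce $\lambda(g)=0$, then observe that $\Delta_g(\w_g^2)=-\w_g^2(\scal(g)+|\nabla f_g|^2)=0$, and invoke Corollary \ref{minimal-domain-2} to place $\w_g^2-c^2$ in $\dom(\Delta_{\max})=H^2_2(M)$ and hence in the kernel of the self-adjoint Laplacian, forcing it to be constant (and then zero by its decay at the tip). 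Both arguments are correct; the paper's is slightly more self-contained (a single weighted integration by parts), while yours trades that for an appeal to the domain identification in Corollary \ref{minimal-domain-2} and extracts the extra information $\lambda(g)=0$ along the way. One small point: when $\w_g\to 0$ at the tip your estimate $\nabla f_g=O(x^{\overline{\gamma}-1})$ should be replaced by $O(x^{-1})$, but the boundary term still vanishes since then $e^{-f_g}=\w_g^2=O(x^{2\overline{\gamma}})$.
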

\begin{remark}
In contrast to Lemma \ref{lambda-diffeo-invariance}, we did not add a case c) here, because case c) in \ref{lambda-diffeo-invariance} is equivalent to b) due to the Ricci soliton equation. Analogous assertions in the shrinker and the expanding case will be made later on but we will not add a corresponding remark there.
\end{remark}
\begin{proof}[Proof of Theorem \ref{steadysolitonricciflat}]We first show that under the conditions in $(i)$ and $(ii)$ that $(M,g)$ is gradient and that $X=\frac{1}{2}\grad f_g$. If $(i)$ holds, $\ric (g)+\mathcal{L}_Xg=0$ for some vector field $X$. Due to the Ricci soliton equation and Corollary \ref{asymptotics_minimizers}, the vector field $\frac{1}{2}\grad f_g-X$ satisfies the assumption of Lemma \ref{lambda-diffeo-invariance}. We can therefore conclude that
\begin{align*}
0=\lambda(g)'(\ric (g)+\mathcal{L}_Xg)&=\lambda(g)'(\ric (g)+\nabla^2f_g)\\&=-\int_M |\ric (g)+\nabla^2f_g|_g^2e^{-f_g}\dv_g,
\end{align*}	
which implies that $(M,g)$ is gradient. Note also that the all the terms are defined due to \eqref{integral_lambda_estimate}.
In case ii) a) or b), we use Lemma \ref{lambda-diffeo-invariance} (ii) combined with \eqref{integral_lambda_estimate_a} and \eqref{integral_lambda_estimate_b}, respectively, to show that $(M,g)$ is gradient. In both cases, we get $\ric (g)+\nabla^2f_g=0$.
Taking the trace of this equation and inserting in \eqref{eulerlagrangelambda} yields
\begin{align*}
-\Delta_g f-|\nabla f_g|^2=\lambda(g).
\end{align*}
Let $f^{||}:=\int_Mf_ge^{-f_g}\dv_g$. Because $\int_M e^{-f_g}\dv_g=1$, $f^{\perp}:=f_g-f^{||}$ satisfies $ \int_Mf^{\perp}e^{-f_g}\dv_g=0$. We obtain
\begin{align*}
0&=\lambda(g)\int_Mf^{\perp}e^{-f_g}\dv_g\\
&=-\int_M (\Delta_g f_g+|\nabla f_g|^2)f^{\perp}e^{-f_g}\dv_g\\
&=-\int_M (\Delta_g f^{\perp}+\langle \nabla f_g,\nabla f^{\perp}\rangle )f^{\perp}e^{-f_g}\dv_g\\
&=-\lim_{\epsilon\to0}\int_{M\setminus \mathscr{C}_\varepsilon(F)}
(\Delta_g f^{\perp}+\langle \nabla f_g,\nabla f^{\perp}\rangle )f^{\perp}e^{-f_g}\dv_g\\
&=-\lim_{\epsilon\to0}\int_{M\setminus \mathscr{C}_\varepsilon(F)} |\nabla f^{\perp}|^2e^{-f_g}dV_g+\lim_{\epsilon\to0}\int_{\partial \mathscr{C}_\varepsilon(F)}\nabla_{\nu}f^{\perp}\cdot f^{\perp}e^{-f_g}\dv_g\\
&=-\int_{M} |\nabla f^{\perp}|^2e^{-f_g}\dv_g,
\end{align*}
which implies that $f_g$ is constant and therefore, $\ric (g)=0$. Here, $\nu$ is the unit normal of the regular boundary of $\mathscr{C}_\varepsilon(F)$ pointing towards the conical singularity. In the last equality, we used that as $x\to 0$
\begin{align*}
\nabla_{\nu}f^{\perp}\cdot f^{\perp}e^{-f_g}=O(x^{-1}),
\end{align*}
which holds by $f_g=-2\log(\w_g)$ and Corollary \ref{asymptotics_minimizers}.
	\end{proof}
\begin{thm}\label{monotonicity_lambda}
	Let $(M^m,g)$, $m\geq 4$ be a compact manifold with an isolated conical singularity that admits an admissible metric $g_0$. Then $\lambda$ is monotonically increasing along the Ricci de Turck flow in Theorem \ref{RF}, 
	starting at $g$ and constant if and only if $(M,g)$ is Ricci flat.
\end{thm}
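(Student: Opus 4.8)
The plan is to reproduce Perelman's monotonicity computation, drawing the diffeomorphism invariance from Lemma~\ref{lambda-diffeo-invariance} and the integrability at the cone tip from Corollary~\ref{asymptotics_minimizers}. Write $g(t)$, $t\in[0,T]$, for the Ricci de Turck flow of Theorem~\ref{RF} with $g(0)=g$; by that theorem each $g(t)$ is again an admissible perturbation, so at every time the minimizer $\w_{g(t)}$ and $f_{g(t)}=-2\log\w_{g(t)}$ exist and obey the asymptotics of Corollary~\ref{asymptotics_minimizers}. Recall also that tangential stability (part of admissibility) forces $(F,g_F)$ to be Einstein with constant $n-1$, so that $\scal(g_F)=n(n-1)$ and the hypotheses of Lemma~\ref{lambda-diffeo-invariance} are available. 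Since $\lambda(g(t))$ is the bottom eigenvalue of the essentially self-adjoint operator $4\Delta_{g(t)}+\scal(g(t))$ of Corollary~\ref{minimal-domain-2}, which is simple with positive ground state $\w_{g(t)}$, first-order perturbation theory makes $t\mapsto\lambda(g(t))$ differentiable and supplies the envelope identity: only the explicit metric dependence survives, so that
\begin{equation*}
\frac{d}{dt}\lambda(g(t))=\lambda(g)'(\partial_t g)=-\int_M\langle \partial_t g,\ric(g)+\nabla^2 f_g\rangle_g\, e^{-f_g}\dv_g,\qquad \partial_t g=-2\ric(g)+\mathcal{L}_{W}g.
\end{equation*}

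Next I would kill the two diffeomorphism contributions with Lemma~\ref{lambda-diffeo-invariance}. For $m\ge5$ part~(i) applies directly, once one checks $\mathcal{L}_Wg\in C^{1,\alpha}_{\textup{ie}}(M,S)_{-2}$ (from the regularity of the de Turck field along the flow) and $\mathcal{L}_{\nabla f_g}g=2\nabla^2 f_g\in C^{1,\alpha}_{\textup{ie}}(M,S)_{-2+\overline{\gamma}}$ (from Corollary~\ref{asymptotics_minimizers}). For $m=4$ one invokes part~(ii), whose hypotheses hold here: condition~b) is met because admissibility forces $|\ric(g)|=O(x^{-2+\gamma})$, and condition~c) is met for $X=\nabla f_g$ because $2\nabla^2 f_g=O(x^{\overline{\gamma}-2})$. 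Either way $\lambda(g)'(\mathcal{L}_Wg)=0$ and $\lambda(g)'(2\nabla^2 f_g)=0$; the latter says $\int_M\langle\nabla^2 f_g,\ric(g)+\nabla^2 f_g\rangle_g e^{-f_g}\dv_g=0$. Discarding the $\mathcal{L}_Wg$ term and completing the square then yields Perelman's identity
\begin{equation*}
\frac{d}{dt}\lambda(g(t))=2\int_M\langle\ric(g),\ric(g)+\nabla^2 f_g\rangle_g\,e^{-f_g}\dv_g=2\int_M|\ric(g)+\nabla^2 f_g|_g^2\,e^{-f_g}\dv_g\ \ge\ 0,
\end{equation*}
which is the claimed monotonicity.

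For the rigidity, if $g$ is Ricci flat then, taking $\widetilde g=g$, the flow is stationary and $\lambda$ is trivially constant. Conversely, if $\lambda(g(t))$ is constant, the identity above forces $\ric(g)+\nabla^2 f_g\equiv0$, so that $g$ is a \emph{gradient} steady soliton; crucially, gradientness is produced automatically by the monotonicity formula, so no additional four-dimensional hypothesis is needed. From here I would argue exactly as in the proof of Theorem~\ref{steadysolitonricciflat}: tracing $\ric(g)+\nabla^2 f_g=0$ and substituting into the Euler--Lagrange equation~\eqref{eulerlagrangelambda} gives $-\Delta_g f_g-|\nabla f_g|^2=\lambda(g)$, and the integration-by-parts argument therein—whose boundary term over $\partial\mathscr{C}_\varepsilon(F)$ is $O(x^{-1})$ and hence negligible by Corollary~\ref{asymptotics_minimizers}—shows $f_g$ is constant, whence $\ric(g)=0$.

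I expect the main obstacle to be purely analytic and localized at the cone tip. The first-variation formula~\eqref{lambdafunctional} is a priori valid only for $h\in C^\infty_0(M,S)$ supported away from the singularity, whereas the tensor $h=\partial_t g$ occurring here is genuinely singular and not compactly supported. Justifying the envelope identity and the vanishing of the diffeomorphism terms for such $h$—that is, the convergence of all the integrals and the legitimacy of approximating $W$ and $\nabla f_g$ by compactly supported vector fields—is precisely what the sharp minimizer asymptotics of Corollary~\ref{asymptotics_minimizers} and the weighted estimates~\eqref{integral_lambda_estimate}--\eqref{integral_lambda_estimate_c} inside Lemma~\ref{lambda-diffeo-invariance} are designed to control. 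The one genuinely new input beyond those lemmas is verifying that the de Turck term $\mathcal{L}_Wg$ lies in the required weighted Hölder space $C^{1,\alpha}_{\textup{ie}}(M,S)_{-2}$, which I would extract from the regularity of the flow in Theorem~\ref{RF} and the structure of $W^k=g^{ij}(\Gamma^k_{ij}(g)-\Gamma^k_{ij}(\widetilde g))$ for admissible perturbations.
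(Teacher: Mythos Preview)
Your proposal is correct and follows essentially the same route as the paper: invoke Lemma~\ref{lambda-diffeo-invariance} (the paper bundles $W$ and $\nabla f_g$ into a single vector field $X=\nabla f+W$, you treat them separately---a cosmetic difference) to obtain Perelman's identity $\frac{d}{dt}\lambda=2\int_M|\ric+\nabla^2 f|^2 e^{-f}\dv$, and for rigidity feed the resulting gradient soliton equation into the integration-by-parts argument of Theorem~\ref{steadysolitonricciflat}. Your remark that the monotonicity formula already yields the \emph{gradient} equation $\ric(g)+\nabla^2 f_g=0$, so that only the second half of that proof is needed without any extra four-dimensional hypothesis, is in fact slightly cleaner than the paper's direct appeal to Theorem~\ref{steadysolitonricciflat}(ii)(b).
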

\begin{proof}
Let $g(t)$ be the Ricci de Turck flow starting at $g$. Then, all $g(t)$ admit the same admissible conical metric $g_0$. We have that the de Turck vector field satisfies $|\mathcal{L}_{W(t)}g(t)|=O(x^{-2})$. Let $f(t):=f_{g(t)}$ and $\w(t):=\w_{g(t)}$. For $X(t)=\grad f(t)+W(t)$, we have
\begin{align*}
\lambda(g(t))'(\mathcal{L}_{X(t)}g(t))=0.
\end{align*}
If $m\geq5$, this follows from Lemma \ref{lambda-diffeo-invariance} (i). If $m=4$, this follows from Lemma \ref{lambda-diffeo-invariance} (ii) a) or b). As a conclusion, we get
\begin{align*}
\lambda(g(t))'(\\partial_t g(t))&=\lambda(g(t))'(-2\ric (g(t))+\mathcal{L}_{W(t)}g(t))\\
&=-2\lambda(g(t))'(\ric (g(t))+\nabla^2f(t))\\
&=2\int_M |\ric (g(t))+\nabla^2f(t)|^2e^{-f(t)}\dv_{g(t)}\geq0.
\end{align*}
Note that the equalities all make sense due to \eqref{integral_lambda_estimate} if $m\geq5$ and \eqref{integral_lambda_estimate_c} and $\eqref{integral_lambda_estimate_a}$ (if $\w(t)\to0$ as $x\to0$). The second assertion is a consequence from Theorem \ref{steadysolitonricciflat} i), if $m\geq5$ and (ii) b) if $m=4$.
	\end{proof}

\section{Perelman's shrinker entropy and shrinking Ricci solitons} \medskip
Recall that the infimum in the definition of $\nu_-(g)$ exists and is realized by a pair $(f_g,\tau_g)$ if $\lambda(g)>0$.
As in the smooth setting one shows that the first variation of $\nu_-$\index{shrinker entropy} is given by 
\begin{align*}\nu_-(g)'(h)=-\frac{1}{(4\pi\tau_g)^{m/2}}\int_M\left\langle \tau_g(\ric(g)+\nabla^2 f_g)-\frac{1}{2}g,h\right\rangle e^{-f_g}\dv_g,
\end{align*}
where $(f_g,\tau_g)$ realizes $\nu_-(g)$ and $h$ is a symmetric $2$-tensor supported away from the singularity.
In this case, a pair $(f_g,\tau_g)$ realizing $\nu_-(g)$ satisfies the Euler-Lagrange equations
\begin{align}\label{nueulerlagrange}\tau_g(2\Delta_g f_g+|\nabla f_g|^2-\scal(g))-f_g+m+\nu_-(g)&=0,\\
\label{nueulerlagrange1.5}\frac{1}{(4\pi\tau_g)^{m/2}}\int_M f_g e^{-f_g}\dv_g&=\frac{m}{2}+\nu_-(g),
\end{align}
see e.g. \cite[p.\ 5]{CZ12}. 
\begin{lem}\label{numinus-diffeo-invariance}
	Let $(M^m,g)$ be a compact manifold with an isolated conical singuarity with $\lambda(g)>0$ and suppose that $\scal (g_F)=(n-1)n$, where $n=m-1=\dimn(F)$. Then,
	\begin{enumerate}
		\item[(i)] if $m\geq5$, $\nu_-(g)'(\mathcal{L}_Xg)=0$ for all vector fields $X$ such that $\mathcal{L}_Xg\in C^{1,\alpha}_{\textup{ie}}(M,S)_{-2}$.
		\item[(ii)] assertion (i) holds for $m=4$, provided that at least one of the following conditions is satisfied for some $\epsilon > 0$
		\begin{itemize}
			\item[a)] $\w_g(x)\to0 $ as $x\to 0$,
			\item[b)] $|\ric (g)|=O(x^{-2+\epsilon})$ as $x\to 0$ and a) does not hold,
			\item[c)] $\mathcal{L}_Xg\in C^{1,\alpha}_{\textup{ie}}(M,S)_{-2+\epsilon}$.
		\end{itemize}
	\item[(iii)] For any dimension $m\geq4$ and any constant $c\in\R$, we have $\nu_-(g)'(c\,g)=0$.
	\end{enumerate}
\end{lem}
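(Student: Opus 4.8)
The plan is to treat (i) and (ii) in direct parallel to Lemma \ref{lambda-diffeo-invariance}, and to deduce (iii) from the scale invariance of the shrinker entropy. Since $\lambda(g)>0$, the minimizing pair $(f_g,\tau_g)$ exists and the first variation of $\nu_-$ reads
\begin{equation*}
\nu_-(g)'(h)=-\frac{1}{(4\pi\tau_g)^{m/2}}\int_M\left\langle \tau_g(\ric(g)+\nabla^2 f_g)-\tfrac{1}{2}g,\,h\right\rangle e^{-f_g}\dv_g,
\end{equation*}
which differs from $\lambda(g)'(h)$ only by the harmless positive constant $(4\pi\tau_g)^{-m/2}$ and by the replacement of $\ric(g)+\nabla^2 f_g$ with $\tau_g(\ric(g)+\nabla^2 f_g)-\tfrac12 g$. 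As $|g|_g=\sqrt{m}$ is bounded, the additional term $-\tfrac12 g$ has weight $0$ and is subleading relative to $\ric(g)+\nabla^2 f_g=O(x^{-2})$, so the leading conical decay of the integrand is unchanged.

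For (i) and (ii) I would reproduce the argument of Lemma \ref{lambda-diffeo-invariance} step by step. The construction of a representative $X\in C^{2,\alpha}_{\textup{ie}}(M,TM)_{-1-\delta}$ with the same $\mathcal{L}_X g$ depends only on the operator $P\colon Y\mapsto \tfrac12\mathcal{L}_Y g$ and its Fredholm theory, not on the functional, hence carries over verbatim (including the modified $m=4$ orthogonality set \eqref{divker2}). The estimates \eqref{integral_lambda_estimate} and \eqref{integral_lambda_estimate_a}--\eqref{integral_lambda_estimate_c} then remain valid after replacing $\ric(g)+\nabla^2 f_g$ by $\tau_g(\ric(g)+\nabla^2 f_g)-\tfrac12 g$ and inserting the constant $(4\pi\tau_g)^{-m/2}$; finiteness of the relevant weighted H\"older norm follows from Corollary \ref{asymptotics_minimizers} together with $f_g=-2\log\w_g$. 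Approximating $X$ by vector fields $X_i$ supported away from the singularity, dominated convergence gives $\nu_-(g)'(\mathcal{L}_{X_i}g)\to\nu_-(g)'(\mathcal{L}_X g)$, while diffeomorphism invariance of $\nu_-$ forces $\nu_-(g)'(\mathcal{L}_{X_i}g)=0$. The dimensional threshold $m\ge 5$ in (i) and the auxiliary hypotheses a)--c) in (ii) enter, exactly as in Lemma \ref{lambda-diffeo-invariance}, only through the integrability of the conical integrand.

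For (iii) the plan is to use that $\nu_-$ is scale invariant. A change of variables shows $\mathcal{W}_-(cg,f,\tau)=\mathcal{W}_-(g,f,\tau/c)$, with the two normalisation constraints matching under $\tau\mapsto\tau/c$; hence $\mu_-(cg,\tau)=\mu_-(g,\tau/c)$ and, taking the infimum over $\tau>0$, $\nu_-(cg)=\nu_-(g)$ for every $c>0$ (note $\lambda(cg)=c^{-1}\lambda(g)>0$, so all quantities stay defined). Differentiating the constant function $s\mapsto\nu_-((1+sc)g)$ at $s=0$ then gives $\nu_-(g)'(cg)=0$ directly from the definition of the Gateaux derivative, in every dimension $m\ge 4$. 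As a cross-check one may instead evaluate the first-variation integrand: using $\langle g,g\rangle=m$, $\langle\ric(g),g\rangle=\scal(g)$ and $\langle\nabla^2 f_g,g\rangle=-\Delta_g f_g$, an integration by parts (whose boundary term over $\partial\mathscr{C}_\varepsilon(F)$ is $O(\varepsilon^{n+\overline{\gamma}-1})\to 0$ by Corollary \ref{asymptotics_minimizers}) turns $\nu_-(g)'(cg)$ into a multiple of $\tau_g\int_M(\scal(g)+|\nabla f_g|^2)e^{-f_g}\dv_g-\tfrac{m}{2}\int_M e^{-f_g}\dv_g$, and both terms evaluate to $\tfrac{m}{2}(4\pi\tau_g)^{m/2}$ through \eqref{nueulerlagrange}, \eqref{nueulerlagrange1.5} and the identity $\mathcal{W}_-(g,f_g,\tau_g)=\nu_-(g)$, so the expression vanishes.

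I expect the main obstacle to lie, as in Lemma \ref{lambda-diffeo-invariance}, in the density step for (i)/(ii): showing that $\mathcal{L}_X g$ is $L^2$-orthogonal to $\ker\mathrm{div}$ and hence lies in the image of $P$, so that $X$ may be replaced by a field with prescribed conical decay despite the lack of any a priori control on the original $X$ at the singularity. For (iii) there is no serious difficulty once scale invariance is invoked; the only delicate point in the alternative direct computation is the vanishing of the boundary term, which is precisely where the improved asymptotics of Corollary \ref{asymptotics_minimizers} are needed.
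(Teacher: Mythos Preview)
Your proposal is correct and follows essentially the same approach as the paper: parts (i) and (ii) are reduced verbatim to Lemma~\ref{lambda-diffeo-invariance}, and part (iii) is deduced from scale invariance of $\nu_-$ together with the a~priori finiteness of the first-variation integral coming from Corollary~\ref{asymptotics_minimizers}. Your additional direct computation for (iii) via \eqref{nueulerlagrange1.5} and the identity $\mathcal{W}_-(g,f_g,\tau_g)=\nu_-(g)$ is a correct and instructive cross-check, but it is not needed for the argument and the paper omits it.
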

\begin{proof}
	Parts (i) and (ii) are exactly shown as in Lemma \ref{lambda-diffeo-invariance}, where the corresponding assertion for the $\lambda$-functional is proven. Part (iii) follows from scale-invariance of $\nu_-$ so that $\nu_-((1+ct)g)$ is defined and constant in $t$	 and the fact that $\nu_-(g)'(c\,g)$ is a priori finite by an analogue of \eqref{integral_lambda_estimate_c} and the estimates on $f_g$ implied by Corollary \ref{asymptotics_minimizers}. 
	\end{proof}
\begin{thm}\label{shrinkingsolitongradient}
	Let $(M^m,g)$ be a compact manifold with an isolated conical singuarity and suppose that $\lambda(g)>0$ and $\scal (g_F)=(n-1)n$, where $n=m-1=\dimn(F)$. Then,
	\begin{itemize}	\item[(i)] if $m\geq5$, and $(M,g)$ is a shrinking Ricci soliton, it is gradient.
		\item[(ii)] assertion (i) holds for $m=4$, provided that at least one of the following conditions is satisfied for some $\epsilon > 0$
		\begin{itemize}
			\item[a)] $\w_g(x)\to0 $ as $x\to 0$,
			\item[b)] $|\ric (g)|=O(x^{-2+\epsilon})$ as $x\to 0$.
		\end{itemize}
	\end{itemize}	
\end{thm}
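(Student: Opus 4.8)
The plan is to follow the blueprint of the steady case (Theorem \ref{steadysolitonricciflat}), replacing Perelman's $\lambda$-functional by the shrinker entropy $\nu_-$ and exploiting in addition its scale invariance. Since $\lambda(g)>0$, the entropy $\nu_-(g)$ is realized by a pair $(f_g,\tau_g)$ with $\tau_g>0$, and the first variation is the pairing against the tensor $V:=\tau_g(\ric(g)+\nabla^2 f_g)-\tfrac12 g$ weighted by $e^{-f_g}$. The whole argument reduces to showing $V\equiv 0$: indeed $V=0$ is precisely the gradient shrinking soliton equation $\ric(g)+\nabla^2 f_g=\tfrac{1}{2\tau_g}\,g$, which exhibits $(M,g)$ as a gradient soliton with potential proportional to $f_g$.

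First I would check that the relevant variations are admissible. From the soliton equation $\mathcal{L}_X g=c\,g-\ric(g)$ together with the asymptotics $\nabla^2 f_g=O(x^{\overline{\gamma}-2})$ supplied by Corollary \ref{asymptotics_minimizers} and the conical control on $\ric(g)$, the vector field $Y:=\tfrac12\grad f_g-X$ satisfies
\[
\mathcal{L}_Y g=\nabla^2 f_g-\mathcal{L}_X g=\nabla^2 f_g-c\,g+\ric(g)\in C^{1,\alpha}_{\textup{ie}}(M,S)_{-2},
\]
exactly as in the steady case; hence Lemma \ref{numinus-diffeo-invariance}(i)/(ii) applies to $Y$.

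Next I would substitute the soliton equation into $V$. Writing $\ric(g)=c\,g-\mathcal{L}_X g$ and using $\nabla^2 f_g-\mathcal{L}_X g=\mathcal{L}_Y g$ yields the key decomposition
\[
V=\left(\tau_g c-\tfrac12\right)g+\tau_g\,\mathcal{L}_Y g.
\]
Pairing $V$ with itself against $e^{-f_g}$ and expanding along this decomposition,
\[
\int_M|V|^2 e^{-f_g}\dv_g=\left(\tau_g c-\tfrac12\right)\int_M\langle V,g\rangle e^{-f_g}\dv_g+\tau_g\int_M\langle V,\mathcal{L}_Y g\rangle e^{-f_g}\dv_g.
\]
The first term vanishes by the scale invariance $\nu_-(g)'(g)=0$ of Lemma \ref{numinus-diffeo-invariance}(iii), and the second by $\nu_-(g)'(\mathcal{L}_Y g)=0$ of Lemma \ref{numinus-diffeo-invariance}(i)/(ii); note that it is exactly the pure-trace contribution $(\tau_g c-\tfrac12)g$, which has no reason to vanish on its own, that is removed by scale invariance, and this is the only structural difference from the steady argument. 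Hence $\int_M|V|^2 e^{-f_g}\dv_g=0$, forcing $V\equiv 0$ and proving the soliton is gradient.

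The main obstacle is analytic rather than algebraic: making the integral identities rigorous at the conical tip. I must ensure that $V$, $g$ and $\mathcal{L}_Y g$ are genuine admissible variations so that the first-variation formula applies, and that each pairing above is absolutely convergent near $x=0$. This is where the expansion $f_g=-2\log\w_g$ with $\w_g=\textup{const}+O(x^{\overline{\gamma}})$ from Corollary \ref{asymptotics_minimizers} is essential, and where the dimensional split enters: for $m\geq5$ the weights are comfortably integrable, whereas for $m=4$ the relevant weighted H\"older spaces no longer embed into $L^2(M,S)$, so one of the decay hypotheses a) or b) — controlling $e^{-f_g}$ or $\ric(g)$ near the singularity — is needed precisely as in the proof of Lemma \ref{numinus-diffeo-invariance}.
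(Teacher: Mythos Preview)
Your argument is correct and is essentially the same as the paper's: both decompose the obstruction tensor $V=\tau_g(\ric(g)+\nabla^2 f_g)-\tfrac12 g$ as a pure-trace part plus $\tau_g\,\mathcal{L}_Y g$ with $Y=\tfrac12\grad f_g-X$, and then kill the two pieces using Lemma~\ref{numinus-diffeo-invariance}(iii) (scale invariance) and (i)/(ii) (diffeomorphism invariance) respectively to force $\int_M|V|^2 e^{-f_g}\dv_g=0$. Your additional remarks on the admissibility of the variations and the role of the asymptotics from Corollary~\ref{asymptotics_minimizers} in the $m=4$ case match exactly the justification the paper defers to the proof of Lemma~\ref{lambda-diffeo-invariance}.
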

\begin{proof}If $(M,g)$ is a shrinking Ricci soliton, then $\ric (g)+\mathcal{L}_Xg=c\,g$ for some vector field $X$ and a constant $c>0$.
	We argue as in the proof of Theorem \ref{steadysolitonricciflat} and apply Lemma \ref{numinus-diffeo-invariance}
	to $Y=\frac{1}{2}\grad f_g-X$ and $(c-\frac{1}{2\tau})g$. The case by case analysis is done as in the fist 
	part of the proof of Theorem \ref{steadysolitonricciflat} and we get that $\ric (g)+\nabla^2f_g=\frac{1}{2\tau_g}g$ which implies the desired result.
	\end{proof}
\begin{thm}\label{monotonicity_nu_minus}
	Let $(M^m,g)$, $m\geq 4$ be a compact manifold with an isolated conical singularity that admits an admissible metric $g_0$. Suppose in addition that $\lambda(g)>0$. Then $\nu_-$ is monotonically increasing along the Ricci de Turck flow in Theorem \ref{RF} (with a suitable normalization, c.f. the remark below) starting at $g$ and constant if and only if $(M,g)$ is a shrinking Ricci soliton.
\end{thm}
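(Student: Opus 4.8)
The plan is to transcribe the argument of Theorem~\ref{monotonicity_lambda} to the functional $\nu_-$, now feeding the first variation
\[
\nu_-(g)'(h)=-\frac{1}{(4\pi\tau_g)^{m/2}}\int_M\Bigl\langle \tau_g(\ric(g)+\nabla^2 f_g)-\tfrac{1}{2}g,\,h\Bigr\rangle e^{-f_g}\dv_g
\]
into the computation and controlling the extra trace term by means of the two Euler--Lagrange equations \eqref{nueulerlagrange}, \eqref{nueulerlagrange1.5}. First I would let $g(t)$ denote the (normalized) Ricci de Turck flow from Theorem~\ref{RF} starting at $g$; all $g(t)$ share the same admissible model $g_0$, and since $\lambda$ is nondecreasing by Theorem~\ref{monotonicity_lambda} and $\lambda(g)>0$, we retain $\lambda(g(t))>0$, so that $\nu_-(g(t))$ is defined with a realizing pair $(f(t),\tau(t)):=(f_{g(t)},\tau_{g(t)})$. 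By the scale invariance in Lemma~\ref{numinus-diffeo-invariance}(iii) the normalization term $c\,g(t)$ contributes nothing to $\nu_-(g(t))'(\partial_t g(t))$, so for the monotonicity computation I may replace $\partial_t g(t)$ by the unnormalized direction $-2\ric(g(t))+\mathcal{L}_{W(t)}g(t)$, where $W(t)$ is the de Turck vector field with $|\mathcal{L}_{W(t)}g(t)|=O(x^{-2})$.

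Next I would split the flow direction using $\mathcal{L}_{\grad f}g=2\nabla^2 f$ as
\[
-2\ric(g(t))+\mathcal{L}_{W(t)}g(t)=-2\bigl(\ric(g(t))+\nabla^2 f(t)\bigr)+\mathcal{L}_{X(t)}g(t),\qquad X(t)=\grad f(t)+W(t).
\]
As in the $\lambda$-case, $X(t)$ satisfies the hypotheses of Lemma~\ref{numinus-diffeo-invariance} (the $\grad f(t)$ part being controlled by Corollary~\ref{asymptotics_minimizers}), so the Lie-derivative term drops out and
\[
\frac{d}{dt}\nu_-(g(t))=\nu_-(g(t))'(\partial_t g(t))=-2\,\nu_-(g(t))'\bigl(\ric(g(t))+\nabla^2 f(t)\bigr).
\]

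The core of the argument is to evaluate the right-hand side. Writing the shrinking-soliton tensor $\mathcal{R}:=\ric(g)+\nabla^2 f_g-\tfrac{1}{2\tau_g}g$, so that $\ric(g)+\nabla^2 f_g=\mathcal{R}+\tfrac{1}{2\tau_g}g$ and the tensor in the first variation is $\tau_g\mathcal{R}$, the formula above gives
\[
\nu_-(g)'(\ric+\nabla^2 f_g)=-\frac{1}{(4\pi\tau_g)^{m/2}}\int_M\Bigl(\tau_g|\mathcal{R}|^2+\tfrac{1}{2}\trace_g\mathcal{R}\Bigr)e^{-f_g}\dv_g.
\]
I would then show the weighted trace integral vanishes. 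With the positive-Laplacian convention $\trace_g\nabla^2 f_g=-\Delta_g f_g$, an integration by parts gives $\int_M\Delta_g f_g\,e^{-f_g}\dv_g=-\int_M|\nabla f_g|^2 e^{-f_g}\dv_g$, the boundary contribution over $\partial\mathscr{C}_\varepsilon(F)$ being $O(x^{-1})$ and hence negligible in the limit exactly as in Theorem~\ref{steadysolitonricciflat}. Combining the definition of $\mathcal{W}_-$ at the minimizer with \eqref{nueulerlagrange1.5} and the constraint $\frac{1}{(4\pi\tau_g)^{m/2}}\int_M e^{-f_g}\dv_g=1$ yields $\frac{1}{(4\pi\tau_g)^{m/2}}\int_M(\scal(g)+|\nabla f_g|^2)e^{-f_g}\dv_g=\tfrac{m}{2\tau_g}$, so that $\int_M\trace_g\mathcal{R}\,e^{-f_g}\dv_g=0$. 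Therefore
\[
\frac{d}{dt}\nu_-(g(t))=\frac{2\tau(t)}{(4\pi\tau(t))^{m/2}}\int_M|\mathcal{R}(t)|^2 e^{-f(t)}\dv_{g(t)}\geq 0,
\]
with equality at some $t$ precisely when $\mathcal{R}(t)\equiv 0$, i.e. $\ric(g(t))+\nabla^2 f(t)=\tfrac{1}{2\tau(t)}g(t)$, identifying $(M,g(t))$ as a gradient shrinking soliton; conversely every shrinking soliton is gradient by Theorem~\ref{shrinkingsolitongradient} and evolves by diffeomorphism and scaling, along which the invariant $\nu_-$ is constant, giving the stated characterization.

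The hard part will be the same analytic point as in Theorem~\ref{steadysolitonricciflat}: justifying the integration by parts and the convergence of all weighted integrals near the cone tip, which rests entirely on the sharp derivative asymptotics $|\nabla_g^k\w_g|_g=O(x^{\overline{\gamma}-k})$ from Corollary~\ref{asymptotics_minimizers}. A secondary technicality is the legitimacy of differentiating $t\mapsto\nu_-(g(t))$ and of applying the first-variation formula along the flow; here the optimization over $\tau$ is already absorbed into the stated formula by the envelope identity at the critical pair $(f_g,\tau_g)$, so that no derivative of $\tau(t)$ ever enters, exactly as in the smooth setting.
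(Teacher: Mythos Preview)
Your proposal is correct and follows exactly the approach the paper indicates (the paper's own proof is the single sentence ``completely analogous to the proof of Theorem~\ref{monotonicity_lambda} and uses Lemma~\ref{numinus-diffeo-invariance}''); you have simply written out the details the paper suppresses. One small simplification: the separate computation showing $\int_M\trace_g\mathcal{R}\,e^{-f_g}\dv_g=0$ via \eqref{nueulerlagrange1.5} is unnecessary, since by Lemma~\ref{numinus-diffeo-invariance}(iii) you may subtract $\tfrac{1}{\tau_g}g$ from $-2(\ric+\nabla^2 f_g)$ at no cost and obtain $\nu_-'(-2\mathcal{R})=\tfrac{2\tau_g}{(4\pi\tau_g)^{m/2}}\int_M|\mathcal{R}|^2 e^{-f_g}\dv_g$ directly from the first-variation formula.
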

\begin{remark}
		By a suitable normalization, we mean that the flow satisfies an evolution equation of the form
	\begin{align*}
	\partial_t g(t)=-2\ric (g(t))+\mathcal{L}_{W(t)}g(t)+\mathcal{F}(g(t))\,g(t),
	\end{align*}
	where $\mathcal{F}$ is a smooth functional on the space of metrics. Such a flow is equivalent to the standard Ricci de Turck flow by a family of rescalings.
	\end{remark}
\begin{proof}[Proof of Theorem \ref{monotonicity_nu_minus}]
The proof is completely analogous to the proof of Theorem \ref{monotonicity_lambda} and uses Lemma \ref{numinus-diffeo-invariance}.
\end{proof}
In the above theorems and in particular in the assertion of Theorem \ref{shrinkingsolitongradient}, we made the restrictive assumption that $\lambda(g)>0$ in order to make sure that the expander entropy is defined. We can overcome this problem by using a simpler variant of it by defining\footnote{The constant $\frac{1}{2}$ in the definition of 
$\mu_-$ is chosen such that it corresponds to the soliton constant $c=1$ in \eqref{def_soliton}.}
\begin{align*}
\mu_-(g)=\nu_-(g,\frac{1}{2}),
\end{align*}
which has the advantage to be defined without the restriction on $\lambda(g)$. Its first variation is
\begin{align}\mu_-(g)'(h)=-\frac{1}{2(2\pi)^{m/2}}\int_M \langle \ric(g)-g+\nabla^2 f_g,h\rangle e^{-f_g}\dv_g,
\end{align}
and the Euler-Lagrange equation of the minimizer $f_g$ is
\begin{align}\frac{1}{2}(2\Delta_g f_g+|\nabla f_g|^2-\scal)-f_g+m+\mu_-(g)=0,
\end{align}
see e.g. \cite{SW15}. The slight differences in the first variation and the Euler-Lagrange equation here and in \cite{SW15} are due to a different normalization in the definition.
Then, statements (i) and (ii) of Lemma \ref{numinus-diffeo-invariance} holds for $\mu_-$ and it can be used to prove
\begin{thm}\label{shrinkingsolitongradient2}
	Let $(M^m,g)$ be a compact manifold with an isolated conical singuarity and $\scal (g_F)=(n-1)n$, where $n=m-1=\dimn(F)$. Then,
	\begin{itemize}	\item[(i)] if $m\geq5$, and $(M,g)$ is a shrinking Ricci soliton, it is gradient.
		\item[(ii)] assertion (i) holds for $m=4$, provided that at least one of the following conditions is satisfied
		\begin{itemize}
			\item[a)] $\w_g(x)\to0 $ as $x\to 0$,
			\item[b)] $|\ric (g)|=O(x^{-2+\epsilon})$ as $x\to 0$.
		\end{itemize}
	\end{itemize}	
\end{thm}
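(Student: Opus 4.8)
The plan is to mirror the proof of Theorem \ref{shrinkingsolitongradient}, but to exploit the fact that the fixed parameter $\tau=\frac12$ in the definition of $\mu_-$ is tuned to the soliton constant $c=1$; this lets us dispense both with the hypothesis $\lambda(g)>0$ (the minimizer of $\mu_-$ exists for any fixed $\tau$ by Theorem \ref{DW-2}) and with the scale-invariance statement (iii) of Lemma \ref{numinus-diffeo-invariance}. First I would reduce to the normalized case $c=1$. If $(M,g)$ is a shrinking Ricci soliton, then $\ric(g)+\mathcal{L}_Xg=c\,g$ for some vector field $X$ and some $c>0$. Rescaling $g\mapsto c\,g$ leaves the Ricci tensor unchanged and yields $\ric(c\,g)+\mathcal{L}_{c^{-1}X}(c\,g)=c\,g$, a shrinking soliton for the metric $c\,g$ with constant $1$. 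Moreover, writing $g=dx^2+x^2g_F+h$ near the singularity and setting $\widetilde x=\sqrt{c}\,x$ shows that the cross section $(F,g_F)$, and hence the hypothesis $\scal(g_F)=(n-1)n$ as well as admissibility, are preserved. Since being gradient is invariant under scaling, we may assume from now on that $c=1$, i.e. $\ric(g)+\mathcal{L}_Xg=g$.

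Next I would set up the central computation. Let $f_g$ be the minimizer realizing $\mu_-(g)=\nu_-(g,\frac12)$ and write $S:=\ric(g)+\nabla^2 f_g$. Consider the vector field $Y:=\frac12\grad f_g-X$. Using the soliton equation with $c=1$ one computes
\begin{align*}
\mathcal{L}_Yg=\nabla^2 f_g-\mathcal{L}_Xg=\nabla^2 f_g+\ric(g)-g=S-g,
\end{align*}
which is exactly the tensor appearing in the first variation of $\mu_-$. I would then verify that $\mathcal{L}_Yg=S-g$ lies in $C^{1,\alpha}_{\textup{ie}}(M,S)_{-2}$: the term $\ric(g)$ is $O(x^{-2})$ with the required H\"older regularity by admissibility, while $f_g=\textup{const}+O(x^{\overline{\gamma}})$ and $\nabla^2 f_g=O(x^{\overline{\gamma}-2})$ by Corollary \ref{asymptotics_minimizers}, and $g$ itself is bounded. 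Hence Lemma \ref{numinus-diffeo-invariance}, whose parts (i) and (ii) hold verbatim for $\mu_-$, applies (in dimension $m=4$ one invokes part (ii) under the extra hypothesis a) or b)) and yields $\mu_-(g)'(\mathcal{L}_Yg)=0$.

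Finally I would substitute $h=\mathcal{L}_Yg=S-g$ into the first variation formula for $\mu_-$, obtaining
\begin{align*}
0=\mu_-(g)'(S-g)=-\frac{1}{2(2\pi)^{m/2}}\int_M|S-g|^2\,e^{-f_g}\dv_g,
\end{align*}
so that $S-g\equiv 0$, i.e. $\ric(g)+\nabla^2 f_g=g$. This exhibits $\frac12\grad f_g$ as a soliton vector field, so $(M,g)$ is gradient; undoing the rescaling proves the claim for the original metric. The main obstacle is analytic rather than conceptual: one must guarantee that $\mathcal{L}_Yg$ has the decay needed to land in the weighted H\"older space of Lemma \ref{numinus-diffeo-invariance} and that the integral above is finite, both of which rest on the improved asymptotics of Corollary \ref{asymptotics_minimizers}. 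This is precisely the point where the dimension-four case forces the auxiliary conditions a) and b), exactly as in Theorem \ref{steadysolitonricciflat}.
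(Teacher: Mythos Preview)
Your proposal is correct and follows essentially the same route as the paper: rescale to soliton constant $c=1$ so that the $\tau=\frac12$ in $\mu_-$ matches, then apply the diffeomorphism invariance (parts (i)/(ii) of Lemma~\ref{numinus-diffeo-invariance} for $\mu_-$) to $Y=\frac12\grad f_g-X$ and read off $\ric(g)+\nabla^2 f_g=g$ from the first variation. The paper's proof is stated in one sentence referring back to Theorem~\ref{shrinkingsolitongradient}; your write-up simply makes explicit the computation $\mathcal{L}_Yg=S-g$ and the integral identity, which is exactly what that reference unpacks to.
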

The proof is exactly the same as in Theorem \ref{shrinkingsolitongradient}, with the only difference that we have to first rescale $g$ to ensure that the soliton constant equals $c=1$. This is because $\nu_+$ is not scale-invariant and so we can not use an analogy of Lemma \ref{numinus-diffeo-invariance} (iii) here. Furthermore, we get
\begin{thm}\label{monotonicity_nu_minus2}
	Let $(M^m,g)$, $m\geq 4$ be a compact manifold with an isolated conical singularity that admits an admissible metric $g_0$. Then $\mu_-$ is monotonically increasing along the Ricci de Turck flow 
	\begin{align*}
	\partial_tg(t)=-2\ric (g(t))+\mathcal{L}_{W(g(t))}g(t)+2g(t)
	\end{align*}
	starting at $g$ and constant if and only if $(M,g)$ is Einstein with constant $1$.
\end{thm}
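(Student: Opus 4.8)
The plan is to follow the scheme of Theorem \ref{monotonicity_lambda} and Theorem \ref{monotonicity_nu_minus} essentially verbatim, with the $\lambda$-functional replaced by $\mu_-(g)=\nu_-(g,\tfrac12)$ and with parts (i) and (ii) of Lemma \ref{numinus-diffeo-invariance} used in the form stated for $\mu_-$. First I would let $g(t)$ denote the flow
\begin{align*}
\partial_t g(t)=-2\ric(g(t))+\mathcal{L}_{W(t)}g(t)+2g(t)
\end{align*}
provided by Theorem \ref{RF}; all $g(t)$ then share the fixed admissible conical model $g_0$, the de Turck field obeys $|\mathcal{L}_{W(t)}g(t)|=O(x^{-2})$, and the minimizers $f(t):=f_{g(t)}=-2\log\w_{g(t)}$ satisfy the sharp asymptotics of Corollary \ref{asymptotics_minimizers}. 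Writing $R(t):=\ric(g(t))-g(t)+\nabla^2 f(t)$ and using $\mathcal{L}_{\grad f}g=2\nabla^2 f$, I would rewrite the flow as $\partial_t g(t)=-2R(t)+\mathcal{L}_{X(t)}g(t)$ with $X(t)=W(t)+\grad f(t)$. Diffeomorphism invariance of $\mu_-$, i.e. the $\mu_-$-analogue of Lemma \ref{numinus-diffeo-invariance} (part (i) for $m\geq5$, part (ii)\,a) or b) for $m=4$), annihilates the Lie-derivative term, so inserting $\partial_t g(t)$ into the first variation of $\mu_-$ gives
\begin{align*}
\frac{d}{dt}\mu_-(g(t))=\mu_-(g(t))'(-2R(t))=\frac{1}{(2\pi)^{m/2}}\int_M |R(t)|^2\,e^{-f(t)}\dv_{g(t)}\geq0.
\end{align*}
Finiteness of every term follows exactly as in \eqref{integral_lambda_estimate_c} and \eqref{integral_lambda_estimate_a} from the decay of $f(t)$ in Corollary \ref{asymptotics_minimizers}. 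This establishes the monotonicity.

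For the rigidity I would argue as follows. Since the integrand is nonnegative, $\mu_-$ is constant along the flow precisely when $R\equiv0$, that is when $\ric(g)+\nabla^2 f_g=g$, so that $(M,g)$ is a gradient shrinking Ricci soliton with soliton constant $c=1$. To promote this to the Einstein condition I would repeat the second half of the proof of Theorem \ref{steadysolitonricciflat}: take the trace of $\ric(g)+\nabla^2 f_g=g$, insert it into the Euler-Lagrange equation $\tfrac12(2\Delta_g f_g+|\nabla f_g|^2-\scal)-f_g+m+\mu_-(g)=0$ to eliminate $\scal$, and then decompose $f_g=f^{||}+f^{\perp}$ with $f^{||}=\frac{1}{(2\pi)^{m/2}}\int_M f_g e^{-f_g}\dv_g$ and $\int_M f^{\perp}e^{-f_g}\dv_g=0$. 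Integrating the resulting identity against $f^{\perp}e^{-f_g}$ over $M\setminus\mathscr{C}_\varepsilon(F)$ and letting $\varepsilon\to0$, the boundary contribution over $\partial\mathscr{C}_\varepsilon(F)$ is controlled by $\nabla_\nu f^{\perp}\cdot f^{\perp}e^{-f_g}=O(x^{-1})$, coming from $f_g=-2\log\w_g$ and Corollary \ref{asymptotics_minimizers}, and the argument is designed to force $f_g$ to be constant; hence $\ric(g)=g$ and $(M,g)$ is Einstein with constant $1$. The converse is immediate: if $\ric(g)=g$ then the constant function realizes the minimizer, $R\equiv0$, $g$ is stationary for the normalized flow, and $\mu_-$ stays constant.

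I expect the rigidity step to be the main obstacle. The monotonicity computation is formal once diffeomorphism invariance of $\mu_-$ is in hand, but that invariance, together with every integration by parts above, rests on the decay of $f_g=-2\log\w_g$ and of $\ric(g)+\nabla^2 f_g$ near the tip furnished by Corollary \ref{asymptotics_minimizers}; in dimension $m=4$ only cases a) and b) of Lemma \ref{numinus-diffeo-invariance} are available and the relevant boundary terms decay only marginally. The genuinely delicate point is therefore twofold: to verify that the cone boundary terms really vanish as $\varepsilon\to0$, and to check that the combination of the soliton identity with the Euler-Lagrange equation is strong enough to force $f_g$ constant, thereby yielding the Einstein conclusion rather than merely the gradient shrinking soliton equation $\ric(g)+\nabla^2 f_g=g$.
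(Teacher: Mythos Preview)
Your monotonicity argument is correct and is precisely the scheme the paper intends (the paper gives no separate proof here, simply pointing to the earlier pattern): rewrite the normalized flow as $\partial_t g=-2R+\mathcal{L}_{X}g$ with $R=\ric-g+\nabla^2 f$, kill the Lie-derivative term via diffeomorphism invariance of $\mu_-$, and read off $\frac{d}{dt}\mu_-=\frac{1}{(2\pi)^{m/2}}\int_M|R|^2e^{-f}\dv\geq0$.

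The rigidity step, however, has a genuine gap. Tracing $\ric(g)+\nabla^2 f_g=g$ gives $\scal=\Delta_g f_g+m$; inserting this into the Euler--Lagrange equation for $\mu_-$ yields
\[
\tfrac{1}{2}\bigl(\Delta_g f_g+|\nabla f_g|^2\bigr)-f_g+\tfrac{m}{2}+\mu_-(g)=0.
\]
Multiplying by $f^{\perp}e^{-f_g}$ and integrating by parts exactly as in Theorem~\ref{steadysolitonricciflat} now produces
\[
0=\int_M|\nabla f^{\perp}|^2\,e^{-f_g}\dv_g-2\int_M(f^{\perp})^2\,e^{-f_g}\dv_g,
\]
with the second term carrying the \emph{wrong} sign; compare the expanding case in Theorem~\ref{expandingsolitoneinstein}, where one obtains $|\nabla f^{\perp}|^2+\tfrac{1}{\tau_g}(f^{\perp})^2$ and can conclude. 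This sign obstruction is not an artifact of the conical setting: already on smooth closed manifolds there exist non-Einstein compact gradient shrinking Ricci solitons (Koiso--Cao, Wang--Zhu), so $\ric(g)+\nabla^2 f_g=g$ cannot force $\ric(g)=g$ in general. Your monotonicity computation shows that $\mu_-$ is constant along the flow if and only if $(M,g)$ is a gradient shrinking soliton with constant~$1$, in line with Theorem~\ref{monotonicity_nu_minus}; the ``Einstein with constant~$1$'' in the stated theorem appears to be an oversight, likely carried over from the expanding analogue Theorem~\ref{monotonicity_nu_plus2}.
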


\section{The expander entropy and expanding Ricci solitons} \medskip
Recall that the infimum in the definition of $\nu_+(g)$ exists and is realized by a pair $(f_g,\tau_g)$ if $\lambda(g)<0$.
As in the smooth setting one shows that the first variation of $\nu_+$\index{shrinker entropy} is given by 
\begin{align*}\nu_+(g)'(h)=-\frac{1}{(4\pi\tau)^{m/2}}\int_M\left\langle \tau_g(\ric (g) +\nabla^2 f_g)-\frac{1}{2}g,h\right\rangle e^{-f_g}\dv_g,
\end{align*}
where $(f_g,\tau_g)$ realizes $\nu_+(g)$ and $h$ is a symmetric $2$-tensor supported away from the singularity.
In this case, a pair $(f_g,\tau_g)$ realizing $\nu_+(g)$ satisfies the Euler-Lagrange equations
\begin{align}\label{nupluseulerlagrange}\tau_g(2\Delta_g f_g+|\nabla f_g|^2-\scal(g))+f_g-m+\nu_+(g)&=0,\\
\label{nupluseulerlagrange1.5}\frac{1}{(4\pi\tau_g)^{m/2}}\int_M f_g e^{-f_g}\dv_g&=\frac{m}{2}-\nu_+(g),
\end{align}
see e.g. \cite[p.\ 5]{CZ12}. 
\begin{lem}\label{nuplus-diffeo-invariance}
	Let $(M^m,g)$ be a compact manifold with an isolated conical singuarity with $\lambda(g)<0$ and suppose that $\scal (g_F)=(n-1)n$, where $n=m-1=\dimn(F)$. Then,
	\begin{enumerate}
		\item[(i)] if $m\geq5$, $\nu_+(g)'(\mathcal{L}_Xg)=0$ for all vector fields $X$ such that $\mathcal{L}_Xg\in C^{1,\alpha}_{\textup{ie}}(M,S)_{-2}$.
		\item[(ii)] assertion (i) holds for $m=4$, provided that at least one of the following conditions is satisfied
		\begin{itemize}
			\item[a)] $\w_g(x)\to0 $ as $x\to 0$,
			\item[b)] $|\ric (g)|=O(x^{-2+\epsilon})$ as $x\to 0$ and a) does not hold,
			\item[c)] $\mathcal{L}_Xg\in C^{1,\alpha}_{\textup{ie}}(M,S)_{-2+\epsilon}$.
		\end{itemize}
		\item[(iii)] For any dimension $m\geq4$ and any constant $c\in\R$, we have $\nu_-(g)'(c\,g)=0$.
	\end{enumerate}
\end{lem}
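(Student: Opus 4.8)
The plan is to exploit that the first variation of $\nu_+$ carries exactly the same integrand $\langle \tau_g(\ric(g)+\nabla^2 f_g)-\tfrac12 g,\, h\rangle e^{-f_g}$ as that of $\nu_-$, and the same structure (after the substitution $f_g=-2\log\w_g$) as the $\lambda$-functional in \eqref{lambdafunctional}. Since $\tau_g$ is a fixed positive constant and the extra summand $\tfrac12 g$ is bounded with respect to $g$, none of the weighted estimates are affected. Hence parts (i) and (ii) should go through verbatim from the argument of Lemma \ref{lambda-diffeo-invariance}, with $\ric(g)+\nabla^2 f_g$ replaced by $\tau_g(\ric(g)+\nabla^2 f_g)-\tfrac12 g$ throughout.

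Concretely, for (i) I would reintroduce the operator $P:Y\mapsto \tfrac12\mathcal{L}_Y g$ with $P^*P=\Delta_g-\ric_g$, pick $\delta>0$ so that $1+\delta$ is a nonexceptional value of $P^*P$, and run the density argument to replace a given $X$ with $\mathcal{L}_X g\in C^{1,\alpha}_{\textup{ie}}(M,S)_{-2}$ by a representative $X\in C^{2,\alpha}_{\textup{ie}}(M,TM)_{-1-\delta}$. Approximating $X$ by vector fields $X_i$ supported away from the cone tip, diffeomorphism invariance of $\nu_+$ yields $\nu_+(g)'(\mathcal{L}_{X_i}g)=0$, and dominated convergence passes this to the limit. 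The decisive analytic input is again Corollary \ref{asymptotics_minimizers}, which controls $\w_g$, hence $f_g$, $\nabla^2 f_g$ and $e^{-f_g}$, so that the weighted norms appearing in the analogue of \eqref{integral_lambda_estimate} are finite.

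For $m=4$, where $C^{1,\alpha}_{\textup{ie}}(M,S)_{-2}$ no longer embeds into $L^2$, I would repeat the three-case split of Lemma \ref{lambda-diffeo-invariance}(ii): case (a) uses the extra decay $e^{-f_g}=\w_g^2=O(x^{2\gamma'})$, case (b) the improved decay of $\ric(g)$, and case (c) a shifted weight for $\mathcal{L}_X g$, each time adjusting the Fredholm weight so that the $L^2$-pairing and the orthogonality argument remain valid, exactly as in the estimates \eqref{integral_lambda_estimate_a}--\eqref{integral_lambda_estimate_c}.

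Part (iii) concerns $\nu_-$ and is proved precisely as Lemma \ref{numinus-diffeo-invariance}(iii): scale-invariance of $\nu_-$ makes $t\mapsto \nu_-((1+ct)g)$ constant in $t$, so its derivative $\nu_-(g)'(c\,g)$ vanishes, finiteness being guaranteed by the analogue of \eqref{integral_lambda_estimate_c} together with Corollary \ref{asymptotics_minimizers}. I note in passing that the corresponding identity does \emph{not} hold for $\nu_+$ itself, reflecting the failure of scale-invariance that forces the rescaling used in the later theorems. I expect the real obstacle throughout to be not the formal algebra but the vanishing of the flux-type boundary integral over $\partial\mathscr{C}_\varepsilon(F)$ produced by each integration by parts as $\varepsilon\to0$; this is exactly what the sharpened asymptotics of Corollary \ref{asymptotics_minimizers} are designed to supply, and it is most delicate in dimension $m=4$, where the case analysis (a)--(c) becomes unavoidable.
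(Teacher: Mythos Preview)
Your approach is correct and matches the paper's, which simply declares the proof ``completely analogous to the proof of Lemma~\ref{numinus-diffeo-invariance}'' (which in turn defers to Lemma~\ref{lambda-diffeo-invariance}). One correction to your aside: $\nu_+$ \emph{is} scale-invariant, since the optimization over $\tau$ absorbs rescalings of $g$ (replace $\tau$ by $c^2\tau$ when $g\mapsto c^2g$); the appearance of $\nu_-$ in part~(iii) is almost certainly a typo for $\nu_+$, and the intended argument is scale-invariance of $\nu_+$---it is only the fixed-$\tau$ functional $\mu_+$ that fails to be scale-invariant and forces the rescaling in Theorem~\ref{expandingsolitoneinstein2}.
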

\begin{proof}This is completely analogous to the proof of Lemma \ref{numinus-diffeo-invariance}.
\end{proof}
\begin{thm}\label{expandingsolitoneinstein}
	Let $(M^m,g)$ be a compact manifold with an isolated conical singuarity and suppose that $\lambda(g)<0$ and $\scal (g_F)=(n-1)n$, where $n=m-1=\dimn(F)$. Then,
	\begin{itemize}	\item[(i)] if $m\geq5$, and $(M,g)$ is an expanding Ricci soliton, it is negative Einstein.
		\item[(ii)] assertion (i) holds for $m=4$, provided that at least one of the following conditions is satisfied
		\begin{itemize}
			\item[a)] $\w_g\to0 $ as $x\to 0$,
			\item[b)] $|\ric (g)|=O(x^{-2+\epsilon})$ as $x\to 0$.
		\end{itemize}
	\end{itemize}	
\end{thm}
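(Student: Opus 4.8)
The plan is to run, in sequence, the two arguments that have already appeared in this paper: first the gradient argument of Theorem \ref{shrinkingsolitongradient} (now with $\nu_+$ and Lemma \ref{nuplus-diffeo-invariance} in place of $\nu_-$ and Lemma \ref{numinus-diffeo-invariance}), to show that an expanding soliton is gradient, and then the integration argument from the second half of the proof of Theorem \ref{steadysolitonricciflat}, to upgrade ``gradient'' to ``Einstein''. As always the dimensional split is dictated by Lemma \ref{nuplus-diffeo-invariance}: for $m\geq5$ one uses part (i), and for $m=4$ one uses part (ii) under hypothesis a) or b).

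\emph{Step 1: the soliton is gradient.} Write the expanding soliton equation as $\ric(g)+\mathcal{L}_Xg=-c\,g$ with $c>0$ and put $Y:=\tfrac12\grad f_g-X$, so that
\begin{align*}
\mathcal{L}_Yg=\ric(g)+\nabla^2 f_g+c\,g.
\end{align*}
By the admissibility of $g$ and the derivative asymptotics of $f_g=-2\log\w_g$ from Corollary \ref{asymptotics_minimizers}, this tensor lies in $C^{1,\alpha}_{\textup{ie}}(M,S)_{-2}$, so Lemma \ref{nuplus-diffeo-invariance} gives $\nu_+(g)'(\mathcal{L}_Yg)=0$. Using part (iii) of that lemma to add the multiple of $g$ that turns $\mathcal{L}_Yg$ into a tensor proportional to the first-variation density $\tau_g(\ric(g)+\nabla^2 f_g)+\tfrac12 g$ --- here the $\tfrac12 g$-term carries the sign opposite to the shrinker case, consistent with the $+f_g$ in \eqref{nupluseulerlagrange} --- and pairing the first variation of $\nu_+$ against this tensor, the integrand becomes a nonnegative multiple of a squared norm. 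Since $\tau_g>0$ this forces the density to vanish, i.e.
\begin{align*}
\ric(g)+\nabla^2 f_g=-\tfrac{1}{2\tau_g}\,g,
\end{align*}
so $(M,g)$ is a gradient expanding soliton with $c=\tfrac{1}{2\tau_g}$.

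\emph{Step 2: the gradient soliton is Einstein.} Taking the trace of the identity from Step 1 gives $\scal(g)-\Delta_g f_g=-\tfrac{m}{2\tau_g}$; substituting this into the Euler--Lagrange equation \eqref{nupluseulerlagrange} and invoking the normalization \eqref{nupluseulerlagrange1.5} collapses everything to
\begin{align*}
\tau_g\bigl(\Delta_g f_g+|\nabla f_g|^2\bigr)=-\bigl(f_g-\overline f\bigr),\qquad \overline f:=\tfrac{m}{2}-\nu_+(g),
\end{align*}
where \eqref{nupluseulerlagrange1.5} identifies $\overline f$ as the $e^{-f_g}\dv_g$-weighted mean of $f_g$. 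Setting $f^\perp:=f_g-\overline f$ (so that $\int_M f^\perp e^{-f_g}\dv_g=0$), multiplying by $f^\perp e^{-f_g}$ and integrating over $M\setminus\mathscr{C}_\varepsilon(F)$, an integration by parts and the limit $\varepsilon\to0$ yield
\begin{align*}
\tau_g\int_M|\nabla f^\perp|^2\,e^{-f_g}\dv_g=-\int_M (f^\perp)^2\,e^{-f_g}\dv_g.
\end{align*}
The left-hand side is nonnegative and the right-hand side nonpositive, so both vanish; hence $f^\perp\equiv0$, i.e. $f_g$ is constant, $\nabla^2 f_g=0$, and $\ric(g)=-\tfrac{1}{2\tau_g}g$ is negative Einstein.

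The step I expect to be most delicate is, as in Theorem \ref{steadysolitonricciflat}, the vanishing of the boundary term $\int_{\partial\mathscr{C}_\varepsilon(F)}(\partial_\nu f^\perp)\,f^\perp e^{-f_g}\,dA$ produced by the integration by parts. Exactly as there, $f_g=-2\log\w_g$ together with $|\nabla_g^k\w_g|_g=O(x^{\overline\gamma-k})$ from Corollary \ref{asymptotics_minimizers} makes this integrand $O(x^{-1})$, while the induced boundary measure is $O(\varepsilon^{n})$; since $n=m-1\geq3$ the contribution is $O(\varepsilon^{n-1})\to0$. The same asymptotics ensure that all the integrals above are finite, just as in \eqref{integral_lambda_estimate} and its variants. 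The only feature genuinely new relative to the steady case is the bare $f_g$ term in the reduced equation, but this is harmless: because $\overline f$ is the weighted mean of $f_g$, the cross term $\int_M f^\perp e^{-f_g}\dv_g$ vanishes and the energy identity closes cleanly.
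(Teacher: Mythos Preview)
Your proposal is correct and follows essentially the same two-step route as the paper: first use Lemma \ref{nuplus-diffeo-invariance} (parts (i)/(ii) for the Lie-derivative term and (iii) for the scaling term) together with the first variation of $\nu_+$ to force $\ric(g)+\nabla^2 f_g=-\tfrac{1}{2\tau_g}g$, then substitute the traced soliton identity into \eqref{nupluseulerlagrange}, multiply by $f^\perp e^{-f_g}$ and integrate by parts---with the boundary term killed by Corollary \ref{asymptotics_minimizers}---to obtain $\tau_g\int_M|\nabla f^\perp|^2 e^{-f_g}\dv_g+\int_M(f^\perp)^2 e^{-f_g}\dv_g=0$, exactly as the paper does. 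Your observation that the $\tfrac12 g$ in the first-variation density for $\nu_+$ carries the sign opposite to the shrinker case is in fact the correct one (the formula displayed just before \eqref{nupluseulerlagrange} in the paper has a sign typo inherited from the $\nu_-$ section), and it is what makes the conclusion $\ric(g)+\nabla^2 f_g=-\tfrac{1}{2\tau_g}g$ consistent.
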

\begin{proof}At first, one shows that under the assumptions of the theorem, that $(M,g)$ is a gradient soliton and that 
	\begin{align*}
	\ric (g)+\nabla^2f_g=-\frac{1}{2\tau_g}g.
	\end{align*}
	This is shown by a case-by-case analysis exactly as in the proof of Theorem \ref{shrinkingsolitongradient}. To show that $(M,g)$ is actually negative Einstein, one argues similarly as in the second part of the proof of Theorem \ref{steadysolitonricciflat}:
	At first, we see that $\scal (g)=\Delta_gf_g-\frac{m}{2\tau_g}$. Inserting in \eqref{nupluseulerlagrange} yields
	\begin{align*}
	\tau_g(\Delta_g f_g+|\nabla f_g|^2) +f_g-\frac{m}{2}+\nu_+(g)=0.
	\end{align*}
	Let $f^{||}:=(4\pi\tau_g)^{-m/2}\int_Mf_ge^{-f_g}\dv_g$. Because $\int_M e^{-f_g}\dv_g=(4\pi\tau_g)^{m/2}$, $f^{\perp}:=f_g-f^{||}$ satisfies $ \int_Mf^{\perp}e^{-f_g}\dv_g=0$. We obtain
	\begin{align*}
	0&=(\nu_+(g)-\frac{m}{2})\int_Mf^{\perp}e^{-f_g}\dv_g\\
	&=-\tau_g\int_M (\Delta_g f_g+|\nabla f_g|^2 +\frac{1}{\tau_g}f_g)f^{\perp}e^{-f_g}\dv_g\\
	&=-\tau_g\int_M (\Delta_g f^{\perp}+\langle\nabla f_g,\nabla f^{\perp}\rangle +\frac{1}{\tau_g}f^{\perp})f^{\perp}e^{-f_g}\dv_g\\
		&=-\lim_{\epsilon\to0}\tau_g\int_{M\setminus B_{\epsilon}(p)} (\Delta_g f^{\perp}+\langle\nabla f_g,\nabla f^{\perp}\rangle +\frac{1}{\tau_g}f^{\perp})f^{\perp}e^{-f_g}\dv_g\\
			&=-\lim_{\epsilon\to0}\tau_g\int_{M\setminus B_{\epsilon}(p)} (|\nabla f^{\perp}|^2+\frac{1}{\tau_g}(f^{\perp})^2 )e^{-f_g}\dv_g
			+\lim_{\epsilon\to0}\tau_g\int_{\partial B_{\epsilon}(p)}\nabla_{\nu}f^{\perp}f^{\perp}e^{-f_g}\dv_g\\
	&=-\tau_g\int_M  (|\nabla f^{\perp}|^2+\frac{1}{\tau_g}(f^{\perp})^2 )e^{-f_g}\dv_g,
	\end{align*}
	where we used Corollarary \ref{asymptotics_minimizers}.
	Therefore, $ f^{\perp}\equiv0$ which implies that $f_g$ is constant. Consequently,  $\ric (g)=-\frac{1}{2\tau_g} g$. Here, $\nu$ is the unit normal of $B_{\epsilon}(p)$ pointing towards $p$. All these equalities hold if $m\geq4$ due to the asymptotic behaviour of $f_g$.
\end{proof}

\begin{thm}\label{monotonicity_nu_plus}
	Let $(M^m,g)$, $m\geq 4$ be a compact manifold with an isolated conical singularity that admits an admissible metric $g_0$. Suppose in addition that $\lambda(g)<0$. Then $\nu_+$ is monotonically increasing along the Ricci de Turck flow (possibly with a suitable normalization) starting at $g$ and constant if and only if $(M,g)$ is negative Einstein.
\end{thm}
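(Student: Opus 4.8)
The plan is to repeat, with Lemma \ref{nuplus-diffeo-invariance} replacing Lemma \ref{lambda-diffeo-invariance}, the diffeomorphism-invariance computation that proved Theorem \ref{monotonicity_lambda} and Theorem \ref{monotonicity_nu_minus}. Let $g(t)$ be the suitably normalized Ricci de Turck flow of Theorem \ref{RF} starting at $g$, as in the remark following Theorem \ref{monotonicity_nu_minus}. By that theorem every $g(t)$ is an admissible perturbation of the one fixed conical metric $g_0$; hence, as long as $\lambda(g(t))<0$, the expander minimizers $(f(t),\tau(t)):=(f_{g(t)},\tau_{g(t)})$ exist and satisfy the sharp asymptotics of Corollary \ref{asymptotics_minimizers}, together with $f(t)=-2\log\w(t)$. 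Since $\lambda$ is nondecreasing along the flow (Theorem \ref{monotonicity_lambda}), the set $\{t:\lambda(g(t))<0\}$ is an initial interval $[0,T')$, and it is there that $\nu_+$ is defined and monotonicity is to be established. Two facts let me invoke the lemma: the de Turck field obeys $|\mathcal{L}_{W(t)}g(t)|=O(x^{-2})$, so $\mathcal{L}_{W(t)}g(t)\in C^{1,\alpha}_{\textup{ie}}(M,S)_{-2}$; and admissibility of $g(t)$ forces $|\ric(g(t))|=O(x^{-2+\gamma'})$, which is precisely hypothesis (ii)\,b) and makes the dimension four case go through with no extra assumption.

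Then I would differentiate. Set $T(t):=\ric(g(t))+\nabla^2 f(t)+\tfrac{1}{2\tau(t)}g(t)$, the tensor whose vanishing characterises a gradient expanding soliton in Theorem \ref{expandingsolitoneinstein}. The normalized evolution equation decomposes as
\begin{align*}
\partial_t g(t)=-2\,T(t)+\mathcal{L}_{\grad f(t)+W(t)}\,g(t)+c(t)\,g(t),
\end{align*}
where $c(t)$ is the spatial constant gathering the pure-trace part of $-2\ric(g(t))$ and of the normalization term. Applying the first variation $\nu_+(g(t))'$ at the optimal pair $(f(t),\tau(t))$ — so that variations in $f$ and $\tau$ contribute nothing and only the metric direction survives — the Lie-derivative term vanishes by Lemma \ref{nuplus-diffeo-invariance}\,(i) for $m\geq5$ and (ii) for $m=4$, and the term $c(t)g(t)$ vanishes by part (iii). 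Completing the square against the first variation leaves
\begin{align*}
\frac{d}{dt}\nu_+(g(t))=\frac{2\tau(t)}{(4\pi\tau(t))^{m/2}}\int_M |T(t)|^2\,e^{-f(t)}\,\dv_{g(t)}\geq0,
\end{align*}
the integral being finite because $e^{-f(t)}=\w(t)^2$ and the asymptotics of Corollary \ref{asymptotics_minimizers} control $|\ric(g(t))+\nabla^2 f(t)|$ near the tip, exactly as in the estimate \eqref{integral_lambda_estimate} and its analogues. Monotonicity follows, and equality at any time forces $T(t)\equiv0$, i.e. $g(t)$ is a gradient expanding soliton, hence negative Einstein by Theorem \ref{expandingsolitoneinstein}.

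The step I expect to demand the most care is the justification, in the singular setting, of the identity $\tfrac{d}{dt}\nu_+(g(t))=\nu_+(g(t))'(\partial_t g(t))$: because $\nu_+=\sup_\tau\inf_f\mathcal{W}_+$, one must know that the optimal pair $(f(t),\tau(t))$ varies regularly enough in $t$ for this envelope identity to hold, and in particular that the $\tau$-optimization (absent in the $\lambda$-case) introduces no hidden contribution. Should a clean differentiability statement be unavailable, I would instead argue by the standard comparison inequality, bounding $\nu_+(g(t_1))$ below by $\mathcal{W}_+(g(t_1),\cdot,\cdot)$ evaluated on the pair transported from $t_0$ along the coupled conjugate-heat and linear-$\tau$ flow, and verifying that the integrations by parts produce no boundary term at the cone tip — again a convergence question answered by Corollary \ref{asymptotics_minimizers}. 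A second, genuinely structural point is that the conclusion holds only on the interval $[0,T')$ where $\lambda(g(t))<0$ keeps $\nu_+$ defined; this restriction, rather than a gap in the argument, is the analogue here of the standing hypothesis $\lambda(g)<0$.
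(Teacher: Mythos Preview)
Your proposal is correct and follows essentially the same route as the paper: the paper's proof is the one-liner ``completely analogous to the proof of Theorem \ref{monotonicity_lambda}'' (the reference there to Lemma \ref{numinus-diffeo-invariance} is a typo for Lemma \ref{nuplus-diffeo-invariance}, exactly as you use it). Your additional care about the envelope identity for $\tfrac{d}{dt}\nu_+(g(t))$ and the domain $[0,T')$ where $\lambda(g(t))<0$ goes beyond what the paper spells out, but is in the same spirit and does not change the argument.
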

\begin{proof}
	The proof is completely analogous to the proof of Theorem \ref{monotonicity_lambda} and uses Lemma \ref{numinus-diffeo-invariance}.
\end{proof}
In the above theorems and in particular in the assertion of Theorem \ref{expandingsolitoneinstein}, we made the restrictive assumption that $\lambda(g)<0$ in order to make sure that the expander entropy is defined. We can overcome this problem by using a simpler variant of it by defining\footnote{The constant $\frac{1}{2}$ in the definition of 
$\mu_-$ is chosen such that it corresponds to the soliton constant $c=1$ in \eqref{def_soliton2}.}
\begin{align*}
\mu_+(g)=\nu_+(g,\frac{1}{2}),
\end{align*}
which has the advantage to be defined without the restriction on $\lambda(g)$. Its first variation is
	\begin{align}\mu_+(g)'(h)=-\frac{1}{(2\pi)^{m/2}}\int_M \langle \ric(g)+g+\nabla^2 f_g,h\rangle e^{-f_g}\dv_g,
\end{align}
and the Euler-Lagrange equation of the minimizer $f_g$ is
\begin{align}\frac{1}{2}(2\Delta_g f_g+|\nabla f_g|^2-\scal(g))+f_g-m+\mu_+(g)=0,
\end{align}
see e.g.\ \cite[p.\ 63]{Kro_Diss}. The slight differences in the first variation and the Euler-Lagrange equation here and in \cite{Kro_Diss} are due to a different normalization in the definition.
Then, statements (i) and (ii) of Lemma \ref{nuplus-diffeo-invariance} holds for $\mu_+$ and it can be used to prove
\begin{thm}\label{expandingsolitoneinstein2}
	Let $(M^m,g)$ be a compact manifold with an isolated conical singuarity and $\scal (g_F)=(n-1)n$, where $n=m-1=\dimn(F)$. Then,
	\begin{itemize}	\item[(i)] if $m\geq5$, and $(M,g)$ is an expanding Ricci soliton, it is negative Einstein.
		\item[(ii)] assertion (i) holds for $m=4$, provided that at least one of the following conditions is satisfied
		\begin{itemize}
			\item[a)] $\w_g(x)\to0 $ as $x\to 0$,
			\item[b)] $|\ric (g)|=O(x^{-2+\epsilon})$ as $x\to 0$.
		\end{itemize}
	\end{itemize}	
\end{thm}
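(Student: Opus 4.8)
The plan is to run the argument of Theorem \ref{expandingsolitoneinstein} essentially verbatim, but with the scale-fixed entropy $\mu_+(g)=\nu_+(g,\tfrac12)$ in place of $\nu_+$. The point of passing to $\mu_+$ is that it is defined for every admissible $g$, with no sign condition on $\lambda(g)$; the price is that $\mu_+$ is no longer scale invariant, so the analogue of Lemma \ref{nuplus-diffeo-invariance}~(iii) is unavailable. To compensate I would first rescale $g$ so that the expanding soliton constant equals $c=1$. Since $\tau_g=\tfrac12$ is frozen into the definition of $\mu_+$, the quantity $\tfrac{1}{2\tau_g}=1$ appearing in the soliton tensor of the first variation then matches $c$ exactly, and no further scaling is needed.

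\textbf{Step 1: the soliton is gradient.} After rescaling, the expanding soliton equation reads $\ric(g)+\mathcal L_X g=-g$ for some vector field $X$. Setting $Y:=\tfrac12\grad f_g-X$ and using $\mathcal L_{\frac12\grad f_g}g=\nabla^2 f_g$ gives
\[
\ric(g)+g+\nabla^2 f_g=\mathcal L_Y g .
\]
By the case-by-case argument that establishes statements (i) and (ii) of Lemma \ref{nuplus-diffeo-invariance} for $\mu_+$ — which requires first relocating $Y$ into a weighted H\"older space $C^{2,\alpha}_{\textup{ie}}(M,TM)_{-1-\delta}$ via the Fredholm and density argument of Lemma \ref{lambda-diffeo-invariance}, together with the decay of $f_g=-2\log\w_g$ from Corollary \ref{asymptotics_minimizers} — one obtains $\mu_+(g)'(\mathcal L_Y g)=0$. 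Plugging $h=\mathcal L_Y g$ into the first variation
\[
\mu_+(g)'(\mathcal L_Y g)=-\frac{1}{(2\pi)^{m/2}}\int_M\bigl|\ric(g)+g+\nabla^2 f_g\bigr|^2\,e^{-f_g}\dv_g=0
\]
forces $\ric(g)+\nabla^2 f_g=-g$, so $(M,g)$ is a gradient expanding soliton.

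\textbf{Step 2: upgrade to negative Einstein.} This mirrors the second half of Theorem \ref{expandingsolitoneinstein}. Tracing the gradient equation yields $\scal(g)=\Delta_g f_g-m$, and substituting into the Euler--Lagrange equation for $\mu_+$ gives $\tfrac12(\Delta_g f_g+|\nabla f_g|^2)+f_g-\tfrac m2+\mu_+(g)=0$. Writing $f^{\|}:=(2\pi)^{-m/2}\int_M f_g\,e^{-f_g}\dv_g$ and $f^{\perp}:=f_g-f^{\|}$, so that $\int_M f^{\perp}e^{-f_g}\dv_g=0$, I would multiply the equation by $f^{\perp}e^{-f_g}$, integrate, and integrate by parts against the drift Laplacian $\Delta_g+\langle\nabla f_g,\nabla\,\cdot\,\rangle$ to obtain
\[
0=-\frac12\int_M\bigl(|\nabla f^{\perp}|^2+2(f^{\perp})^2\bigr)\,e^{-f_g}\dv_g .
\]
Hence $f^{\perp}\equiv0$, so $f_g$ is constant and $\ric(g)=-g$, i.e. $(M,g)$ is negative Einstein.

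\textbf{Main obstacle.} The delicate point, as throughout this paper, is the integration by parts across the conical singularity in Step 2: one must show that the boundary contribution $\lim_{\epsilon\to0}\int_{\partial B_\epsilon(p)}(\nabla_\nu f^{\perp})\,f^{\perp}\,e^{-f_g}\dv_g$ vanishes and that every integral in sight is finite. Both follow from the sharp asymptotics $f_g=\textup{const}+O(x^{\overline{\gamma}})$ and the derivative bounds $|\nabla_g^k f_g|_g=O(x^{\overline{\gamma}-k})$ of Corollary \ref{asymptotics_minimizers}. In dimension $m=4$ these asymptotics alone no longer suffice to control $Y$ and the boundary term, which is exactly why the additional hypotheses (a) $\w_g(x)\to0$ or (b) $|\ric(g)|=O(x^{-2+\epsilon})$ are imposed there, precisely as in Lemma \ref{nuplus-diffeo-invariance}~(ii).
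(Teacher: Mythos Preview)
Your proposal is correct and follows essentially the same route as the paper: rescale so that the soliton constant is $c=1$ (matching the frozen $\tau=\tfrac12$ in $\mu_+$), use the $\mu_+$-analogue of Lemma \ref{nuplus-diffeo-invariance} (i)--(ii) to conclude the soliton is gradient, and then rerun the $f^{\perp}$ integration-by-parts argument of Theorem \ref{expandingsolitoneinstein} with $\tau_g=\tfrac12$ to force $f_g$ constant. The paper's proof sketch is exactly this (the ``$c=-1$'' there is a typo for $c=1$, as the footnote preceding the theorem confirms).
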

The proof is exactly the same as in Theorem \ref{expandingsolitoneinstein}, with the only difference that we have to first rescale $g$ to ensure that the soliton constant $c=-1$. This is because $\nu_-$ is not scale-invariant and so we can not use an analogy of Lemma \ref{nuplus-diffeo-invariance} (iii) here. To show Einsteinness, one does the same proof with $\tau=\frac{1}{2}$.
 Furthermore, we get
\begin{thm}\label{monotonicity_nu_plus2}
	Let $(M^m,g)$, $m\geq 4$ be a compact manifold with an isolated conical singularity that admits an admissible metric $g_0$. Then $\mu_+$ is monotonically increasing along the Ricci de Turck flow 
	\begin{align*}
	\partial_tg(t)=-2\ric( g(t))+\mathcal{L}_{W(g(t))}g(t)-2g(t)
	\end{align*}
	 starting at $g$ and constant if and only if $(M,g)$ is Einstein with constant $-1$.
\end{thm}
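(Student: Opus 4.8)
The plan is to follow the proof of Theorem \ref{monotonicity_lambda} line by line, now with the functional $\lambda$ replaced by the simplified expander functional $\mu_+ = \nu_+(g,\tfrac12)$ and with Lemma \ref{lambda-diffeo-invariance} replaced by parts (i) and (ii) of Lemma \ref{nuplus-diffeo-invariance}, which hold verbatim for $\mu_+$. Let $g(t)$ denote the normalized Ricci de Turck flow of the theorem, starting at $g$. By Theorem \ref{RF} each $g(t)$ is an admissible perturbation of the fixed admissible reference metric $g_0$, so the de Turck field satisfies $|\mathcal{L}_{W(t)} g(t)| = O(x^{-2})$ and, crucially, admissibility forces $|\ric(g(t))| = O(x^{-2+\gamma'})$ for some $\gamma' > 0$. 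I write $f(t) := f_{g(t)}$ for the $\mu_+$-minimizer and set $X(t) := \grad f(t) + W(t)$.

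The heart of the argument is the algebraic rearrangement of the evolution equation, using $\mathcal{L}_{\grad f(t)} g(t) = 2\nabla^2 f(t)$,
\begin{align*}
\partial_t g(t) = -2\,\ric(g(t)) + \mathcal{L}_{W(t)} g(t) - 2\, g(t)
= -2\bigl(\ric(g(t)) + g(t) + \nabla^2 f(t)\bigr) + \mathcal{L}_{X(t)} g(t).
\end{align*}
Applying the first variation of $\mu_+$ and annihilating the Lie-derivative term via Lemma \ref{nuplus-diffeo-invariance}, namely $\mu_+(g(t))'(\mathcal{L}_{X(t)} g(t)) = 0$, I obtain
\begin{align*}
\frac{d}{dt}\mu_+(g(t)) = \mu_+(g(t))'(\partial_t g(t))
= \frac{2}{(2\pi)^{m/2}} \int_M \bigl|\ric(g(t)) + g(t) + \nabla^2 f(t)\bigr|^2 e^{-f(t)} \dv_{g(t)} \geq 0,
\end{align*}
which is the desired monotonicity. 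At this point I would check, exactly as in the estimates \eqref{integral_lambda_estimate}--\eqref{integral_lambda_estimate_c}, that every integral converges at the cone tip: this is where Corollary \ref{asymptotics_minimizers} enters, controlling $e^{-f(t)} = \w(t)^2$ and $\nabla^2 f(t) = O(x^{\overline{\gamma}-2})$ together with the admissible decay of $\ric(g(t))$.

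To invoke Lemma \ref{nuplus-diffeo-invariance} I must verify its hypotheses for $X(t)$: since $\mathcal{L}_{W(t)} g(t) = O(x^{-2})$ and $\mathcal{L}_{\grad f(t)} g(t) = 2\nabla^2 f(t) = O(x^{\overline{\gamma}-2})$, we indeed have $\mathcal{L}_{X(t)} g(t) \in C^{1,\alpha}_{\textup{ie}}(M,S)_{-2}$. For $m \geq 5$ part (i) applies directly; for $m = 4$ part (ii)(b) applies because admissibility already guarantees $|\ric(g(t))| = O(x^{-2+\gamma'})$, so no extra hypothesis on $g$ is needed and the clean statement $m \geq 4$ is justified. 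The rigidity statement then follows from the variation formula above: $\mu_+$ is constant along the flow if and only if $\ric(g(t)) + g(t) + \nabla^2 f(t) \equiv 0$, i.e. $(M,g)$ is a gradient expanding soliton with soliton constant $c = 1$. By the second half of the proof of Theorem \ref{expandingsolitoneinstein2} the potential $f_g$ is then constant, whence $\ric(g) = -g$, that is, $(M,g)$ is Einstein with constant $-1$. The converse is immediate, since for such a metric $f_g$ is constant and the integrand vanishes identically.

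The main obstacle I anticipate is not the formal computation but its two analytic underpinnings. First, justifying that $t \mapsto \mu_+(g(t))$ is differentiable and that its derivative is computed by the first variation evaluated at the \emph{fixed} minimizer $f(t)$ — i.e. that the $t$-dependence of the minimizer contributes nothing to first order; this is the usual envelope argument at a critical point and transfers from the smooth setting once one knows the minimizer depends suitably on $g(t)$. Second, and more delicately, making the integration-by-parts rigorous at the singularity when extending the first variation from compactly supported $h$ to the non-compactly-supported tensors $\ric + g + \nabla^2 f$ and $\mathcal{L}_{X} g$: every such step must be carried out on $M \setminus \mathscr{C}_\varepsilon(F)$ with the boundary contributions over $\partial \mathscr{C}_\varepsilon(F)$ shown to vanish as $\varepsilon \to 0$, which again rests squarely on the sharp asymptotics of $\w_g$ and its $\V_b$-derivatives from Corollary \ref{asymptotics_minimizers}.
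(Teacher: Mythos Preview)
Your proposal is correct and follows essentially the same route as the paper, which gives no explicit proof for this theorem but indicates it is obtained exactly as Theorem~\ref{monotonicity_lambda} with $\lambda$ replaced by $\mu_+$ and Lemma~\ref{lambda-diffeo-invariance} replaced by the corresponding parts of Lemma~\ref{nuplus-diffeo-invariance}. One small imprecision: for $m=4$ you invoke only part (ii)(b) of the diffeomorphism-invariance lemma, but that clause carries the side condition ``and a) does not hold''; the correct statement, as in the paper's proof of Theorem~\ref{monotonicity_lambda}, is that either (ii)(a) applies (when $\w_{g(t)}(x)\to 0$) or (ii)(b) applies (when it does not, using the admissible decay $|\ric(g(t))|=O(x^{-2+\gamma'})$), so the two cases together cover $m=4$ without extra hypotheses.
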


\end{document}